\documentclass{article}



\usepackage{amsmath}%
\usepackage{lmodern}
\usepackage[T1]{fontenc}
\usepackage[babel=true]{microtype}

\usepackage{mathrsfs}
\usepackage{textcomp}
\usepackage{stmaryrd}
\usepackage{wasysym}
\usepackage{amsmath}
\usepackage{amsxtra}%
\usepackage{amsfonts}%
\usepackage{amssymb}%
\usepackage{amsthm}
\usepackage{graphicx}
\usepackage{epstopdf}
\usepackage{color,hyperref}
\hypersetup{colorlinks,breaklinks,
linkcolor=blue,urlcolor=blue,
anchorcolor=blue,citecolor=blue}
\usepackage[dvipsnames]{xcolor}
\usepackage{subfigure}
\usepackage{appendix}
\usepackage{enumerate}
\usepackage{enumitem}
\usepackage[margin=2.54cm]{geometry}
\usepackage{tikz}



\newcommand{\ImagePath}[1]{#1}

\newcommand{\figurescale}{1}


\newcommand{\eps}{\varepsilon}

\newcommand{\N}{\mathbb{N}}
\newcommand{\R}{\mathbb{R}}

\newcommand{\C}{\mathbb{C}}

\newcommand{\mcl}{\mathcal{L}}

\def\XXint#1#2#3{{\setbox0=\hbox{$#1{#2#3}{\int}$ }
\vcenter{\hbox{$#2#3$ }}\kern-.6\wd0}}

\newcommand*{\pscal}[1]{\langle #1\rangle}
\newcommand*{\Cal}[1]{\mathcal{#1}}
\newcommand*{\Rm}[1]{\mathrm{#1}}

\newcommand*{\norme}[1]{\lVert#1\rVert}

\newcommand*{\absolu}[1]{\lvert#1\rvert}
\newcommand*{\transpose}[1]{#1^\mathrm{T}}

\DeclareMathOperator*{\real}{\mathrm{Re}}
\DeclareMathOperator*{\imag}{\mathrm{Im}}
\DeclareMathOperator{\range}{im}
\DeclareMathOperator*{\Span}{Span}		
\DeclareMathOperator*{\fred}{fred}
\renewcommand*{\d}{\mathrm{d}}
\newcommand*{\lra}{\longrightarrow}




\newtheorem{thm}{Theorem}
\newtheorem{prop}{Proposition}
\newtheorem*{thm*}{Theorem}

\newtheorem{lemma}[prop]{Lemma}						
\newtheorem{corollary}[prop]{Corollary}				

\numberwithin{equation}{section}
\numberwithin{prop}{section}

\theoremstyle{definition}
\newtheorem{rmk}[prop]{Remark}

\newcommand\blfootnote[1]{%
\begingroup
\renewcommand\thefootnote{}\footnote{#1}%
\addtocounter{footnote}{-1}%
\endgroup
}


\author{Montie Avery\footnote{Corresponding author. avery142@umn.edu. School of Mathematics, University of Minnesota, 206 Church St. SE, Minneapolis, MN, 55455, USA.} \and Louis Gar\'enaux\footnote{louis.garenaux@math.univ-toulouse.fr. Institut de Math\'ematiques de Toulouse, UMR 5219, Universit\'e de Toulouse CNRS, UPS IMT, F-31062 Toulouse Cedex 9, France.}}
\title{Spectral stability of the critical front in the extended Fisher-KPP equation}

\begin{document}
\maketitle

\begin{abstract}
\noindent We revisit the existence and stability of the critical front in the extended Fisher-KPP equation, refining earlier results of Rottsch\"afer and Wayne \cite{RottschaferWayne} which establish stability of fronts without identifying a precise decay rate. Our main result states that the critical front is marginally spectrally stable, with essential spectrum touching the imaginary axis but with no unstable point spectrum. Together with the recent work of Avery and Scheel \cite{AveryScheelSelection, AveryScheel}, this establishes both sharp stability criteria for localized perturbations to the critical front, as well as propagation at the linear spreading speed from steep initial data, thereby extending front selection results beyond systems with a comparison principle. Our proofs are based on far-field/core decompositions which have broader use in establishing robustness properties and bifurcations of invasion fronts. 
\end{abstract}

Keywords: traveling waves, pulled fronts, spectral stability, singular perturbations, embedded eigenvalues.

AMS subject classifications: 35B35, 35B25, 35K25, 35P05, 37L15.

\blfootnote{Declaration of competing interests: none.}

\section{Introduction}

\subsection{Background and main results}
The extended Fisher-KPP equation 
\begin{align}
u_t = - \delta^2 u_{xxxx} + u_{xx} + f(u), \quad f(0) = f(1) = 0, \label{e: EFKPP}
\end{align}
is a fundamental model for understanding the dynamics of invasion fronts in systems without comparison principles \cite{DeeSaarloos}, and may further be derived as an amplitude equation near certain co-dimension 2 bifurcations in reaction-diffusion systems \cite{RottschaferDoelman}. Indeed, while rigorous results on front propagation from steep initial data are typically limited to equations with comparison principles, the \textit{marginal stability conjecture} predicts that invasion speeds in spatially extended systems are universally predicted by marginal spectral stability of an associated invasion front \cite{vanSaarloos}. In the current setting, such invasion fronts solve the traveling wave equation
\begin{align}
0 = -\delta^2 q'''' + q'' + c q' + f(q), \quad q(-\infty) = 1, \quad q(\infty) =0. \label{e: EFKPP fronts}
\end{align}
The review paper \cite{vanSaarloos} presents many examples in which this conjectured behavior is observed in systems without comparison principles through numerical simulations, physical experiments, and formal asymptotic analysis. The lack of a comparison principle is essential to much of the interesting dynamics explored in \cite{vanSaarloos}, in which invasion fronts select features of periodic patterns generated in their wake. Concurrent to the present work, the first author and Scheel gave a rigorous proof of the marginal stability conjecture for unpatterned invasion in higher order parabolic systems, identifying precise spectral criteria which lead to selection of critical pulled fronts \cite{AveryScheelSelection}. The present work establishes that these spectral assumptions hold for \eqref{e: EFKPP} for $\delta$ sufficiently small, thereby establishing front selection in the absence of comparison principles and making progress towards understanding the dynamics of pattern forming fronts explored in \cite{vanSaarloos}. 

Here we assume $f$ is of Fisher-KPP type: $f(0) = f(1) = 0$, $f'(0) > 0$, $f'(1) < 0$, and for instance $f''(u) < 0$ for all $u \in (0,1)$; see Section \ref{s: remarks} for comments on this last assumption. In this case, the marginal stability conjecture predicts that strongly localized initial data in \eqref{e: EFKPP} propagate with the \textit{linear spreading speed} $c_*(\delta)$, a distinguished speed for which solutions to the linearization
\begin{align*}
u_t = -\delta^2 u_{xxxx} + u_{xx} + c u_x + f'(0) u
\end{align*}
generically grow exponentially pointwise for $c < c_* (\delta)$ but decay for $c > c_*(\delta)$. The linear spreading speed may be more precisely characterized by the location of simple pinched double roots of the associated dispersion relation; see below for details. Our first result establishes the existence of a critical front traveling with the linear spreading speed, which was previously proved by Rottsch\"afer and Wayne using geometric singular perturbation theory \cite{RottschaferWayne}. 
\begin{thm}[Existence of the critical front] \label{t: existence}
For $\delta$ sufficiently small and for $c = c_*(\delta)$, there exists a smooth traveling front $q_*$ solving \eqref{e: EFKPP fronts}, such that 
\begin{align*}
q_*(x; \delta) = (\mu(\delta) + x) e^{-\eta_* (\delta) x} + \mathrm{O}(e^{-(\eta_*(\delta) + \eta) x}), 
\hspace{4em}
x \to \infty
\end{align*}
for some $\eta > 0$, where $\mu (\delta) = 1 + \mathrm{O}(\delta)$ and $\eta_*(\delta) = \sqrt{f'(0)} + \mathrm{O}(\delta)$. Moreover, $q_*(\cdot; \delta)$ depends continuously on $\delta$, uniformly in space. 
\end{thm}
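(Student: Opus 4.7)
The plan is to apply the implicit function theorem via a far-field/core decomposition, continuing the classical critical Fisher-KPP front (which corresponds to $\delta = 0$) to small $\delta > 0$.

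First, I would identify $c_*(\delta)$ and $\eta_*(\delta)$ from the spatial dispersion relation obtained by linearizing \eqref{e: EFKPP fronts} at $u = 0$:
\begin{equation*}
d(\nu; c, \delta) = -\delta^2 \nu^4 + \nu^2 + c\nu + f'(0).
\end{equation*}
At $\delta = 0$ the KPP pair $c_*(0) = 2\sqrt{f'(0)}$, $\eta_*(0) = \sqrt{f'(0)}$ satisfies the pinched double-root conditions $d = \partial_\nu d = 0$ with non-degenerate Jacobian in $(c, \nu)$, and the implicit function theorem produces smooth branches $c_*(\delta), \eta_*(\delta)$. The pinching property in the temporal spectral variable is preserved because the two new fast roots of $d$ lie near $\pm 1/\delta$ and remain well-separated from the slow roots.

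Next, I would introduce the far-field/core ansatz
\begin{equation*}
q(x) = \chi_-(x) + \chi_+(x)(\mu + x) e^{-\eta_*(\delta) x} + \phi(x),
\end{equation*}
with smooth cutoffs $\chi_\pm$ satisfying $\chi_- + \chi_+ = 1$, and with $\phi$, $\mu$, $c$ treated as unknowns. The correction $\phi$ is sought in an exponentially weighted Sobolev space $H^4_\eta$ requiring decay strictly faster than $e^{-\eta_*(\delta) x}$ at $+\infty$ and exponential decay at $-\infty$. Substituting into \eqref{e: EFKPP fronts} gives a smooth nonlinear map
\begin{equation*}
\mathcal{F}(\phi, \mu, c; \delta) : H^4_\eta \times \mathbb{R}^2 \to L^2_\eta.
\end{equation*}
The crucial observation is that at $c = c_*(\delta)$ the function $(\mu + x) e^{-\eta_* x}$ lies in the kernel of the constant-coefficient linearization at $u = 0$ because $-\eta_*$ is a double spatial root of $d$; the residual produced by the ansatz therefore decays strictly faster than $e^{-\eta_* x}$ and belongs to $L^2_\eta$.

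The third step is to apply the implicit function theorem at $\delta = 0$. The classical KPP critical front, translated so that $\mu_0 = 1$ in its leading asymptotics, provides a base point $(\phi_0, \mu_0, c_0)$ with $\mathcal{F}(\phi_0, \mu_0, c_0; 0) = 0$. A standard Fredholm analysis in the weighted spaces shows that $D_\phi \mathcal{F}$ has cokernel corresponding to adjoint modes tied to the double spatial eigenvalue at $+\infty$; the derivatives $\partial_\mu \mathcal{F}$ and $\partial_c \mathcal{F}$ produce two explicit elements of $L^2_\eta$ which, by a transversality/residue calculation, span this cokernel, so that $D_{(\phi, \mu, c)}\mathcal{F}|_{\delta = 0}$ is a bijection. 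The implicit function theorem then yields a smooth branch $(\phi(\delta), \mu(\delta), c(\delta))$; the claimed asymptotic expansion and uniform spatial continuity follow from $\phi(\delta) \in H^4_\eta \hookrightarrow L^\infty_\eta$ depending continuously on $\delta$.

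The main obstacle is the singular perturbation. The term $-\delta^2 \partial_x^4$ is unbounded relative to the $\delta = 0$ operator $\partial_x^2 + c\partial_x + f'(q_*^{\mathrm{KPP}})$, so one cannot simply treat it as a small perturbation. The far-field/core framework mitigates this by confining the fast spatial modes of order $\pm 1/\delta$ entirely into the correction $\phi$, and by working on the natural domain $H^4_\eta$ of the fourth-order operator; but verifying that $D_\phi \mathcal{F}(\delta)$ is Fredholm with uniformly bounded inverse as $\delta \to 0^+$ still requires a careful factorization of the fourth-order symbol separating slow and fast spatial dynamics, and this is where I expect the bulk of the technical work to lie.
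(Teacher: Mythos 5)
Your high-level strategy --- a far-field/core ansatz of the form $q = \chi_- + \chi_+ (\mu + x) e^{-\eta_*(\delta)x} + \text{(core)}$, solved by the implicit function theorem starting from the classical KPP front at $\delta = 0$ --- is exactly the paper's approach. However, there are two genuine gaps.

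First and most importantly, you have correctly diagnosed but not resolved the singular perturbation. As you set things up, $\mathcal{F}$ maps $H^4_\eta \times \R^2 \to L^2_\eta$. At $\delta = 0$ the linearization in the core variable is the \emph{second-order} operator $\partial_x^2 + a_1 \partial_x + a_0$, and this operator is \emph{not Fredholm} from $H^4_\eta$ to $L^2_\eta$: its range is contained in $H^2_\eta$, which is not closed in $L^2_\eta$. So the implicit function theorem cannot be invoked at $\delta = 0$ in your spaces, and no amount of ``factorizing the fourth-order symbol'' by itself fixes that --- one needs to change the operator or the spaces before invoking IFT. The paper's key device, which you do not mention, is preconditioning: one applies $(1 - \delta^2 \partial_x^2)^{-1}$ to the equation and studies $G = (1-\delta^2\partial_x^2)^{-1} F$ instead. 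Then
\begin{align*}
(1-\delta^2\partial_x^2)^{-1}\mathcal{S}(\delta) = \partial_x^2 + a_1 \partial_x + \tilde a_0 + \delta^2(1-\delta^2\partial_x^2)^{-1}(a_3\partial_x^3 + a_2\partial_x^2) + T(\delta)(a_1\partial_x + \tilde a_0),
\end{align*}
with $T(\delta) = (1-\delta^2\partial_x^2)^{-1} - 1$, and Lemmas~\ref{l: preconditioner estimates} and \ref{l: T delta estimates} of the paper show the correction terms are small operators from $H^2_{0,\eta}$ to $L^2_{0,\eta}$, continuously in $\delta$ with $T(0)=0$. The problem thus becomes a \emph{regular} perturbation of $\mathcal{L}(0): H^2_{0,\eta} \to L^2_{0,\eta}$, which is Fredholm of index $-1$, and the IFT applies.

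Second, you treat the speed $c$ as an unknown alongside $\phi$ and $\mu$. This creates a Fredholm index mismatch: $\mathcal{L}(0)$ on the core space has index $-1$, so a bordered operator on $(\phi, \mu, c) \in H \times \R^2$ has index $+1$ and cannot be a bijection, regardless of any transversality computation. The paper fixes $c = c_*(\delta)$ \emph{a priori} from the pinched double root of the dispersion relation (Lemma~\ref{l: pinched double root}) --- your first paragraph in fact does exactly this --- and solves only for $(v, \mu)$, giving a bordered operator of index $0$ whose invertibility is then established by computing $\langle \mathcal{L}(0)\chi_+, \varphi\rangle = \eta_*(0) \neq 0$. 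Moreover, your ansatz uses $\eta_*(\delta)$, which is only the relevant far-field decay rate when $c = c_*(\delta)$; allowing $c$ to vary would destroy the localization of the residual you rely on, unless $\eta_*$ were made to track $c$, which you do not do.
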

Our proof is based on a far-field/core decomposition, relying only on basic Fredholm properties of the linearization about the critical front for $\delta = 0$ together with explicit preconditioners which regularize the singular perturbation. We believe our methods have further utility in describing bifurcations from pushed to pulled front propagation as well as analyzing invasion fronts in nonlocal equations. We also mention that the existence of both invasion fronts and fronts connecting two stable states in fourth order parabolic equations, including the extended Fisher-KPP equation with $\delta$ not necessarily small, was established in \cite{Berg_Hulshof_Vandervorst_01} using topological arguments.

Perturbations $v(t, x - c_*(\delta) t) = u(t,x) + q_*(x - c_*(\delta) t; \delta)$ of the critical front in \eqref{e: EFKPP} solve 
\begin{align}
v_t = \mathcal{A}(\delta) v + f(q_* + v) - f(q_*) - f'(q_*) v, \label{e: perturbation eqn}
\end{align}
where $\Cal{A}(\delta) : H^4(\R) \subset L^2(\R) \longrightarrow L^2(\R)$ is the linearization about the critical front, defined through
\begin{equation}
\label{e: unweighted linearized EFKPP}
\Cal{A}(\delta) := -\delta^2 \partial_x^4 + \partial_x^2 + c_* \partial_x + f'(q_*(x; \delta)).
\end{equation} 
The essential spectrum of the linearization $\mathcal{A}(\delta)$ is unstable due to the instability of the background state $u \equiv 0$. Hence, to establish a stability result, one restricts to perturbations with prescribed exponential localization. The optimal exponential weight here matches the decay rate of the critical front; we therefore define
\begin{align}
\omega_* (x; \delta) = \begin{cases}
e^{\eta_*(\delta) x}, & x \geq 1, \\
1, & x \leq -1,
\end{cases}
\label{e: critical weight},
\end{align}
so that the conjugate operator $\mathcal{L} (\delta) = \omega_* (\cdot; \delta) A(\delta) \omega_*(\cdot; \delta^{-1}) : H^4 (\R) \to L^2 (\R)$ describes the linearized dynamics of perturbations in this weighted space. The essential spectrum of $\mathcal{L}(\delta)$ is marginally stable, touching the imaginary axis only at the origin; see Figure \ref{f: spectrum A/L} and Lemma \ref{l: pinched double root} for details. Our main result establishes spectral stability for $\mathcal{L}(\delta)$ as required by the marginal stability conjecture in light of \cite{AveryScheelSelection}. 
\begin{thm}[Spectral stability] \label{t: spectral stability}
There exists a $\delta_0 > 0$ such that for all $\delta \in (- \delta_0, \delta_0)$ the operator $\mathcal{L}(\delta)$ has no eigenvalues $\lambda$ with $\mathrm{Re } \, \lambda \geq 0$, and there does not exist a bounded solution to $\mathcal{L}(\delta) u = 0$. 
\end{thm}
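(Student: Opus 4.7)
The strategy is to treat $(\mathcal{L}(\delta) - \lambda) u = 0$ as a singular perturbation of the classical Fisher--KPP linearization $\mathcal{L}(0)$, for which marginal spectral stability of the critical front in the exponentially weighted space is classical. I would split the closed right half-plane $\{\mathrm{Re}\,\lambda \geq 0\}$ into three regions --- large $|\lambda|$, a compact region bounded away from the origin, and a neighborhood of $\lambda = 0$ on the edge of the essential spectrum --- and handle each separately, using throughout the far-field/core decomposition introduced in the proof of Theorem~\ref{t: existence}.

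For large $|\lambda|$, the four roots of the asymptotic spatial characteristic equation at $\pm \infty$ separate uniformly from the imaginary axis, so an exponential dichotomy argument combined with a weighted energy estimate confines possible unstable eigenvalues and resonances to a compact set $K \subset \{\mathrm{Re}\,\lambda \geq 0\}$, uniformly in $\delta \in [0, \delta_0)$. On the part of $K$ bounded away from $\lambda = 0$ I would set up a bordered operator by writing a candidate eigenfunction as $u = w + \chi_+ a_+ e_+(\cdot;\lambda,\delta) + \chi_- a_- e_-(\cdot;\lambda,\delta)$, with smooth cutoffs $\chi_\pm$ and far-field modes $e_\pm$. Explicit preconditioners absorbing the two spatial modes that diverge as $\delta \to 0$ render this bordered operator regular in $\delta$ and Fredholm of index zero; at $\delta = 0$ it reduces to the classical Fisher--KPP eigenvalue problem, whose kernel is trivial on $K \setminus \{0\}$, and a compactness argument on $K$ transports invertibility to all small $\delta$.

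The main obstacle is $\lambda = 0$, where the essential spectrum touches the imaginary axis, the two modes $e_\pm$ at $+\infty$ coalesce, and the bordered operator above degenerates. Marginal stability requires excluding both an embedded eigenvalue and a bounded resonance of $\mathcal{L}(\delta)$ at the origin. Following the structure of the existence proof, I would replace $e_\pm$ at $+\infty$ by the algebraic basis $(e^{-\eta_*(\delta) x}, x e^{-\eta_*(\delta) x})$ associated with the pinched double root, obtaining an enlarged bordered operator that is again regular in $\delta$ and Fredholm. At $\delta = 0$ its kernel is two-dimensional --- spanned by $q_*'(\cdot;0)$ and the wave-speed derivative of the classical Fisher--KPP front family --- and the expansion $q_*(x;0) \sim (1+x) e^{-\sqrt{f'(0)}\,x}$ from Theorem~\ref{t: existence} implies that no nontrivial linear combination of these has vanishing coefficient in the $x e^{-\eta_* x}$ mode. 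Hence no element of this kernel is bounded in the weighted space at $\delta = 0$.

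This non-degeneracy is captured by a $2 \times 2$ boundary-trace determinant that depends continuously on $\delta$ by Theorem~\ref{t: existence}; its nonvanishing at $\delta = 0$ therefore persists for small $\delta > 0$, excluding both unstable point spectrum and a bounded solution of $\mathcal{L}(\delta) u = 0$ near the origin and completing the proof. The hardest step, requiring the most care, is precisely this construction of a far-field/core reduction at $\lambda = 0$ that remains well-posed as $\delta \to 0$: one must simultaneously handle the degeneracy of the asymptotic eigenspace at the edge of the essential spectrum and the coalescence of two of the four spatial modes, while keeping the reduction and the non-degeneracy determinant continuous in the singular parameter $\delta$.
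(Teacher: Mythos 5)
Your three-region architecture (a compact confinement set, an intermediate region, and a neighborhood of the origin) coincides with the paper's, and the large-eigenvalue confinement step is morally the same as the paper's Proposition~\ref{p: large eigenvalues}, which does it via a direct Fourier/sectoriality estimate rather than an exponential-dichotomy argument. In the intermediate region the paper does not set up a bordered operator; it argues by contradiction, preconditioning $(\mathcal L(\delta_n)-\lambda_n)u_n=0$ and passing to the limit $\delta_n\to 0$ to land on the resolvent set of $\mathcal L(0)$ --- simpler than what you propose, though your route is not implausible.

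The genuine gap is at the origin. First, the far-field ansatz you propose at $+\infty$, with \emph{both} modes $e^{-\eta_*x}$ and $x\,e^{-\eta_*x}$, contains a function that is unbounded in the critically weighted variables ($x\,e^{-\eta_*x}$ becomes $x$ after conjugation by $\omega_*$), so it is not the natural ansatz for bounded solutions of $\mathcal L(\delta)u=0$. The paper instead unfolds the double root by setting $\gamma=\sqrt\lambda$ and uses a \emph{single} far-field mode $\chi_+ e^{\nu_2(\delta,\gamma)x}$, where $\nu_2$ is the unique small root with $\mathrm{Re}\,\nu_2<0$, which is analytic in $\gamma$ and reduces to $\chi_+$ at $\gamma=0$ (Lemma~\ref{l: ansatz}). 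You flag the coalescence of the two small spatial roots as ``the hardest step'' but do not say how to parametrize around it; the reparametrization by $\gamma$ is exactly what makes the reduction analytic and well-posed, and without it your boundary-trace determinant is not obviously even continuous. Second, your claimed two-dimensional kernel of the bordered operator at $(\delta,\lambda)=(0,0)$ is wrong: $\mathcal L(0):H^2_{0,\eta}\to L^2_{0,\eta}$ has trivial kernel and a one-dimensional cokernel (Lemma~\ref{l: L0 fredholm properties}), so appending two scalar far-field parameters raises the index to $+1$, and since $\langle\mathcal L(0)\chi_+,\varphi\rangle=\eta_*(0)\neq 0$ the bordered operator is surjective and hence has a \emph{one}-dimensional kernel (spanned, in the unweighted picture, by $q_0'$ alone --- the other ODE solution $\phi$ grows at $-\infty$ and is excluded). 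Third, ``the wave-speed derivative of the classical Fisher--KPP front family'' at $c=c_*$ is not a well-defined object: the exponential decay rate of $q_c$ changes discontinuously (from $e^{-\mu(c)x}$ with $\mu(c)<\eta_*$ to $(1+x)e^{-\eta_*x}$) as $c\to c_*^+$, so the family is not differentiable at $c_*$ in any weighted space adapted to the critical decay. The correct non-degeneracy quantity is a single scalar, $E(0,0)=\langle\mathcal L(0)\chi_+,\varphi\rangle=\eta_*(0)>0$, not a $2\times 2$ determinant, and it is computed by a single integration by parts (proof of Lemma~\ref{l: existence linearization invertibility}). Your proposal has the right spirit --- far-field/core reduction, non-degeneracy at $\delta=0$, persistence by continuity --- but the ansatz, the kernel count, and the nondegeneracy quantity would all need to be replaced by their correct one-dimensional versions and wrapped in the $\gamma=\sqrt\lambda$ unfolding before it becomes a proof.
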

Together with Lemmas \ref{l: pinched double root} and \ref{l: left stability} which control the essential spectrum, Theorem \ref{t: spectral stability} says that the critical front is marginally spectrally stable. The results in \cite{AveryScheel} therefore imply nonlinear stability of the critical front against localized perturbations, with sharp decay rates and precise characterization of the leading order asymptotics. To state these, we first define for $r \in \R$ a smooth positive one-sided algebraic weight $\rho_r$ which satisfies
\begin{align*}
\rho_r (x) = 
\begin{cases}
1, & x \leq -1, \\
(1+x^2)^{r/2}, & x \geq 1. 
\end{cases}
\end{align*}
We then have the following nonlinear stability results.

\begin{corollary}[Nonlinear stability]\label{c: nonlinear stability}
Let $r > \frac{3}{2}$. There exist constants $\eps > 0$ and $C > 0$ such that if $\|\omega_{*} \rho_r v_0\|_{H^1} < \eps$, then 
\begin{align*}
\| \omega_* \rho_{-r} v(t, \cdot)\|_{H^1} \leq \frac{C \eps}{(1+t)^{3/2}},
\end{align*}
where $v$ is the solution to \eqref{e: perturbation eqn} with initial data $v_0$. Furthermore if $r > \frac{5}{2}$, then there exists a real number $\alpha_* = \alpha_*(\omega_{*} \rho_r v_0)$, depending smoothly on $\omega_{*} \rho_r v_0$ in $H^1 (\R)$, such that for $t > 1$,
\begin{align*}
\| \rho_{-r} \omega_* (v(t, \cdot) - \alpha_* t^{-3/2} q_*'(\cdot; \delta))\|_{H^1} \leq \frac{C \eps}{(1+t)^2}.
\end{align*}
\end{corollary}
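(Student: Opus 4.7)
The corollary is not to be proved from scratch: it is a direct application of the nonlinear stability theorem of Avery--Scheel \cite{AveryScheel}, which reduces nonlinear stability of a critical pulled front in a higher-order parabolic equation to a checklist of spectral hypotheses. The plan, therefore, is to verify that the extended Fisher--KPP equation \eqref{e: EFKPP} together with the critical front $q_*(\cdot;\delta)$ constructed in Theorem \ref{t: existence} satisfies each of those hypotheses, and then simply to invoke the theorem.

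First, I would assemble the marginal spectral stability of $\mathcal{L}(\delta)$. The requirement in \cite{AveryScheel} has two parts: the essential spectrum must lie in the closed left half plane and touch the imaginary axis only at the origin through a simple pinched double root; and there must be no unstable point spectrum and no bounded kernel element. The first part is exactly the content of Lemmas \ref{l: pinched double root} (simple pinched double root at $\lambda=0$ corresponding to the critical decay rate $\eta_*(\delta)$) and \ref{l: left stability} (left stability of the remaining essential spectrum). The second part is precisely Theorem \ref{t: spectral stability}. Together these give marginal spectral stability in the sense of \cite{AveryScheel}.

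Second, I would check the structural hypotheses on the front and the nonlinearity. The asymptotic expansion in Theorem \ref{t: existence}, $q_*(x;\delta) = (\mu(\delta)+x)e^{-\eta_*(\delta) x} + \mathrm{O}(e^{-(\eta_*(\delta)+\eta)x})$ as $x\to\infty$, identifies $q_*$ as a genuinely pulled front with the resonant linear $\times$ exponential profile characteristic of the double root at $\eta_*(\delta)$, which is the structural input the linear semigroup analysis in \cite{AveryScheel} requires to deliver the sharp $t^{-3/2}$ Gaussian-type decay and the $t^{-3/2} q_*'$ leading-order asymptotic. The Fisher-KPP assumptions on $f$ (smoothness, $f(0)=0$, $f'(0)>0$) supply the required regularity of the nonlinear remainder in \eqref{e: perturbation eqn}, with $f(q_*+v) - f(q_*) - f'(q_*)v = \mathrm{O}(v^2)$ uniformly in $x$.

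Once these inputs are in hand, the estimates of the corollary are exactly the conclusions of \cite[Main Theorem]{AveryScheel}: the $t^{-3/2}$ decay in the algebraically weighted norm for $r > 3/2$, and for the more localized regime $r > 5/2$ the refined asymptotic $\alpha_* t^{-3/2} q_*'(\cdot;\delta)$ with $\alpha_*$ depending smoothly on the data. The only real work, and the step I expect to require care, is the translation of the abstract hypotheses of \cite{AveryScheel}---which are phrased for a general class of higher-order parabolic problems with pinched double-root spectral structure---into the present scalar fourth-order setting: checking compatibility of function spaces, verifying any analyticity or simplicity assumption on the dispersion relation beyond what Lemmas \ref{l: pinched double root}--\ref{l: left stability} state, and confirming that the sectorial semigroup framework applies. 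None of these present genuine difficulty given the smoothness of $f$ and the explicit form of $\mathcal{L}(\delta)$, but they constitute the bulk of the verification.
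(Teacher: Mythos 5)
Your proposal is correct and matches the paper's approach exactly: the paper itself gives no separate proof of this corollary but states that it follows from the marginal spectral stability established in Lemmas \ref{l: pinched double root}, \ref{l: left stability}, and Theorem \ref{t: spectral stability}, combined directly with the nonlinear stability machinery of \cite{AveryScheel}. Your identification of which results supply which hypotheses (essential spectrum from the two lemmas, absence of unstable point spectrum and of a resonance at the origin from Theorem \ref{t: spectral stability}, the resonant front asymptotics from Theorem \ref{t: existence}) is precisely what the invocation requires.
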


Nonlinear stability of the critical front in the classical Fisher-KPP equation, $\delta = 0$, against localized perturbations was established by Kirchg\"assner \cite{Kirchgassner} and later refined in \cite{EckmannWayne, Gallay, FayeHolzer, AveryScheel}. The sharp $t^{-3/2}$ decay rate in this setting was first established in \cite{Gallay} and later reobtained in \cite{FayeHolzer, AveryScheel}. Crucial to this improved decay compared to the standard diffusive decay rate $t^{-1/2}$ is the lack of an embedded eigenvalue of the linearization at $\lambda = 0$, as captured here in Theorem \ref{t: spectral stability}, an observation made precise in \cite{AveryScheel}. Nonlinear stability of the critical front for $\delta \neq 0$ was obtained in \cite{RottschaferWayne} via weighted energy estimates, but without a precise characterization of the decay rate, while the $t^{-3/2}$ decay rate obtained here is sharp in light of the asymptotics given in Corollary \ref{c: nonlinear stability}. 

Finally, the spectral stability obtained in Theorem \ref{t: spectral stability}, Lemma \ref{l: pinched double root}, and Lemma \ref{l: left stability}, together with the analysis in \cite{AveryScheelSelection} confirms the marginal stability conjecture for \eqref{e: EFKPP}. 

\begin{corollary}[Front selection]\label{c: front selection}
Fix $r > 2$. For any $\eps > 0$ there exists a class of initial data $\mathcal{U}_\eps$, including nontrivial data supported on a half-line, such that for any $u_0 \in \mathcal{U}_\eps$, we have 
\begin{align*}
\sup_{x \in \R} | \rho_{-1} (x) \omega_* (x; \delta) [u(x + \sigma(t), t) - q_*(x; \delta)] | < \eps,
\end{align*}
where $u$ is the solution to \eqref{e: EFKPP} with initial data $u_0$, and 
\begin{align*}
\sigma(t) = c_* (\delta) t - \frac{3}{2 \eta_*(\delta)} \log t + x_\infty (u_0)
\end{align*}
for some $x_\infty (u_0) \in \R$. Moreover, $\mathcal{U}_\eps$ is open in the topology induced by the norm $\| f \| = \| \rho_r \omega_* f \|_{L^\infty}$. 
\end{corollary}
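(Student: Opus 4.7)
The plan is to deduce Corollary \ref{c: front selection} by invoking the abstract front selection theorem of \cite{AveryScheelSelection} as a black box, and to use the other results of the present paper to verify its hypotheses. The abstract theorem is formulated for higher-order parabolic equations whose critical pulled front satisfies a precise list of spectral and geometric conditions; under those conditions it produces exactly the weighted $L^\infty$ convergence to $q_*$, the logarithmic Bramson shift $-\tfrac{3}{2\eta_*(\delta)}\log t$, and an asymptotic phase $x_\infty(u_0)$ depending smoothly on the initial data in the stated norm.

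First, I would catalogue the hypotheses required for the selection theorem of \cite{AveryScheelSelection} to apply. They divide into three groups: (a) structural assumptions on the nonlinearity (Fisher-KPP type with the concavity already standing in the current paper), together with the higher-order parabolic form of the equation, both of which are immediate for \eqref{e: EFKPP}; (b) existence of a smooth critical front with leading asymptotics $(\mu+x)e^{-\eta_* x}$ and an $e^{-(\eta_*+\eta)x}$ remainder, encoding that $\eta_*$ is a simple pinched double root of the spatial dispersion relation at $\lambda=0$, $c=c_*(\delta)$, which is exactly the content of Theorem \ref{t: existence}; (c) marginal spectral stability of $\mathcal{L}(\delta)$ in the weighted space defined via \eqref{e: critical weight}, meaning essential spectrum contained in the closed left half-plane and touching $i\R$ only at the origin in a quadratic tangency, absence of unstable point spectrum, and absence of any bounded solution of $\mathcal{L}(\delta)u=0$, which are furnished respectively by Lemmas \ref{l: pinched double root}, \ref{l: left stability}, and Theorem \ref{t: spectral stability}.

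Next, I would identify the admissible class $\mathcal{U}_\eps$ as the one produced by \cite{AveryScheelSelection}. Concretely, it is an open neighborhood in the $\|\rho_r\omega_* \cdot\|_{L^\infty}$ topology (with $r>2$) of a reference steep profile interpolating between $q_*$ at $-\infty$ and $0$ at $+\infty$. The fact that $\mathcal{U}_\eps$ contains nontrivial data supported on a half-line follows from the explicit treatment of such Heaviside-type data in \cite{AveryScheelSelection}; openness in the stated norm is a consequence of continuous dependence built into the pointwise Green's function estimates used there. The linearized decay rates needed to run the nonlinear scheme are exactly those quantified in Corollary \ref{c: nonlinear stability} via \cite{AveryScheel}.

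The only genuine work in the proof is a bookkeeping check that the normalizations in the present paper match those assumed in \cite{AveryScheelSelection}: the coefficient $\mu(\delta)$ and exponent $\eta_*(\delta)$ in Theorem \ref{t: existence} must match the pinched double root structure quantified in Lemma \ref{l: pinched double root}, and the weight \eqref{e: critical weight} must be the one used to set up the conjugate operator in \cite{AveryScheelSelection}. Once these alignments are verified, there is no remaining analytic obstacle — the corollary follows directly. The hard part, in other words, has already been done in establishing Theorems \ref{t: existence} and \ref{t: spectral stability} together with the two essential-spectrum lemmas; the role of the smallness of $\delta$ is confined to those inputs, and the abstract selection mechanism of \cite{AveryScheelSelection} is $\delta$-independent once they are in place.
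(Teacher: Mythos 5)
Your proposal matches the paper's treatment: the corollary is stated as an immediate consequence of the abstract selection theorem in \cite{AveryScheelSelection}, with the hypotheses of that theorem verified here by Theorem \ref{t: existence} (front existence and asymptotics), Lemma \ref{l: pinched double root} and Lemma \ref{l: left stability} (essential spectrum), and Theorem \ref{t: spectral stability} (no unstable or marginal point spectrum), and the paper offers no further proof beyond the citation. The one small inaccuracy is attributing the nonlinear decay machinery to Corollary \ref{c: nonlinear stability} via \cite{AveryScheel}; that corollary concerns stability against localized perturbations and is separate from the selection argument, which lives entirely in \cite{AveryScheelSelection}.
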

This result confirms that open classes of steep initial data propagate with the linear spreading speed $c_* (\delta)$, up to a universal logarithmic delay, as predicted by the marginal stability conjecture \cite{vanSaarloos}; see \cite{AveryScheelSelection} for further details. In the classical Fisher-KPP equation, $\delta = 0$, analogous convergence results for non-negative steep data may be shown using comparison principles \cite{Aronson, Comparison1, Comparison2, Lau} or probabilistic methods \cite{Bramson1, Bramson2}. We believe Corollary \ref{c: front selection} represents an important step in extending results on front selection beyond equations with comparison principles and toward pattern forming systems. 

\subsection{Remarks}\label{s: remarks}
\noindent \textbf{Assumptions on $f$.} Since we prove our results by perturbing from the classical Fisher-KPP equation, our results hold for any smooth nonlinearity $f$ which satisfies $f(0) = f(1) = 0$, $f'(0) > 0, f'(1) < 0$, and for which existence and spectral stability of the critical front hold for the classical Fisher-KPP equation with this reaction term. In particular, this is implied by the assumption $f''(u) < 0$ for $u \in (0,1)$ \cite[Theorem 5.5]{Sattinger}, which we state in the introduction. This can be weakened, for instance, to the assumption that $0 < f(u) \leq f'(0) u$ for $u \in (0,1)$; see e.g. \cite{Aronson}. 

\noindent \textbf{General approach -- preconditioning.} Our approach to regularizing the singular perturbation is based on preconditioning with an appropriately chosen operator. To illustrate the main idea, briefly consider the eigenvalue problem for the unweighted linearization, $(\mathcal{A}(\delta) - \lambda) u = 0$. Applying $(1- \delta^2 \partial_x^2)^{-1}$ to $\mathcal{A}(\delta) - \lambda$, we obtain 
\begin{align*}
(1-\delta^2 \partial_x^2)^{-1} (\mathcal{A} (\delta) - \lambda) &= (1-\delta^2 \partial_x^2)^{-1} [(1-\delta^2 \partial_x^2) \partial_x^2 + c_*(\delta) \partial_x + f'(q_*) - \lambda] \\
&= \partial_x^2 + (1-\delta^2 \partial_x^2)^{-1} (c_*(\delta) \partial_x + f'(q_*) - \lambda) \\
&= \partial_x^2 + c_*(\delta) \partial_x + f'(q_* (\cdot; \delta)) - \lambda + T(\delta) \big(c_*(\delta) \partial_x + f'(q_*(\cdot; \delta)) - \lambda \big),
\end{align*}
where $T(\delta) = (1-\delta^2 \partial_x^2)^{-1} - 1$. 
Once we prove that the terms involving $T(\delta)$ are continuous in $\delta$, the eigenvalue problem becomes essentially a regular perturbation of the
classical Fisher-KPP linearization, at $\delta = 0$. We prove the necessary estimates on the preconditioners using direct Fourier analysis in Section \ref{s: preconditioner estimates}. This approach is inspired by that used to construct oblique stripe solutions in a quenched Swift-Hohenberg equation in \cite{GohScheel}. 

\noindent \textbf{Stability to less localized perturbations.} We note that under the spectral stability conditions we prove here, in addition to Corollary \ref{c: nonlinear stability}, one also immediately obtains from the results of \cite{AveryScheel} stability under less localized perturbations, with a prescribed decay rate which is slower than $t^{-3/2}$. See \cite[Theorems 3 and 4]{AveryScheel} for details. 

\noindent \textbf{Geometric vs. functional analytic point of view.} We remark here that one should also be able to prove the spectral stability results obtained here using geometric dynamical systems methods, in particular geometric singular perturbation theory in the sense of Fenichel \cite{Fenichel} together with the gap lemma \cite{GardnerZumbrun, KapitulaSandstede}, which is used to extend the Evans function into the essential spectrum. An attractive feature of our approach here is that it is quite self contained, ultimately relying mostly on basic Fredholm theory and Fourier analysis. We also remark that in principle the functional analytic methods could be adapted, together with the approach to linear stability through obtaining resolvent estimates via far-field/core decompositions in \cite{AveryScheel}, to problems in stability of critical fronts in nonlocal equations, since these methods do not rely as heavily on the presence of an underlying phase space. Some of the relevant Fredholm theory for nonlocal operators has been developed in \cite{FayeScheel1, FayeScheel2}. 

\noindent \textbf{Natural range for $\delta$.}
In this paper, we have restricted to small $\delta$. However, we believe that similar results should hold true for larger values of this parameter. While the existence of fronts is established in \cite{Berg_Hulshof_Vandervorst_01} for all speeds $c>0$ and $\delta\in \R$, we do not have access to explicit decay at $+\infty$ for this fronts, which seems necessary to establish precise stability. Monotonicity of the front would imply such a precise decay by use of Ikehara's theorem \cite{CarrChmaj}. An important value is $\bar{\delta} = 1/\sqrt{12 f'(0)}$, at which the dispersion relation admits a triple root, and the essential spectrum of the linearized operator becomes tangent to the imaginary axis. Stability at or above this value of $\delta$ is therefore fundamentally outside the scope of \cite{AveryScheel}. 

\noindent \textbf{Supercritical and subcritical fronts.} If we consider a supercritical front, traveling with speed $c > c_*(\delta)$ and constructed in \cite{RottschaferWayne}, one can simplify the argument of Theorem \ref{t: spectral stability} to prove that the linearization about such a front has no unstable point spectrum. For these fronts, one can use an exponential weight to push the essential spectrum entirely into the left half plane, and thereby with the analogue of Theorem \ref{t: spectral stability} obtain stability of supercritical fronts with an exponential decay rate using standard semigroup methods (see e.g. \cite{Henry}). Subcritical fronts, with $c < c_*(\delta)$, have unstable absolute spectrum, meaning in particular that the essential spectrum of the linearization about any of these fronts is unstable in any exponentially weighted space. A modified version of our proof of Theorem \ref{t: existence} should also give existence of these supercritical and subcritical fronts using functional analytic methods, although we do not give the details here. 

\noindent \textbf{Additional notation.} For $r > 0$, we let $B(0, r)$ denote the ball of radius $r$ centered at the origin in the complex plane.  

\noindent \textbf{Outline.} The remainder of this paper is organized as follows. In Section \ref{s: preliminaries}, we compute some preliminary information needed for our analysis (the linear spreading speed in \eqref{e: EFKPP fronts} and the cokernel of $\mathcal{L}(0)$) and prove some necessary estimates on our preconditioner. In Section \ref{s: existence}, we use explicit preconditioners and a far-field/core decomposition to prove Theorem \ref{t: existence}, establishing existence of the critical front. In Section \ref{s: small eigenvalues}, we define a functional analytic analogue of the Evans function near $\lambda = 0$, and use it together with knowledge of the spectrum of $\mathcal{L}(0)$ to prove that $\mathcal{L}(\delta)$ has no resonance at the origin or unstable eigenvalues for $\delta$ small. In Section \ref{s: large eigenvalues}, we complete the proof of Theorem \ref{t: spectral stability} by showing that there are also no unstable eigenvalues away from the origin. 

\noindent \textbf{Acknowledgements.} The authors are grateful to Arnd Scheel and Gr\'egory Faye for helpful comments. MA was supported by the National Science Foundation through the Graduate Research Fellowship Program under Grant No. 00074041. Any opinions,
findings, and conclusions or recommendations expressed in this material are those of the
authors and do not necessarily reflect the views of the National Science Foundation.

\section{Preliminaries}\label{s: preliminaries}

\subsection{Exponential weights}
\label{s: exponential weights}
In addition to the critical weight \eqref{e: critical weight} which we use to shift the essential spectrum out of the right half plane, we will need further exponential weights to recover Fredholm properties of $\mcl(\delta)$ and related operators for our far-field/core analysis. For $\eta_{\pm} \in \R$, we define a smooth positive weight function $\omega_{\eta_-, \eta_+}$ satisfying
\begin{align*}
\omega_{\eta_-, \eta_+} = \begin{cases}
e^{\eta_- x}, &x \leq -1, \\
e^{\eta_+ x}, &x \geq 1. 
\end{cases}
\end{align*}
If $\eta_- = 0$ and $\eta_+= \eta$, then we write $\omega_{\eta_-, \eta_+} = \omega_\eta$. If $\eta_- = \eta_+ = \eta$, we choose $\omega_{\eta, \eta} (x) = e^{\eta x}$.

Given an integer $m$, we define the exponentially weighted Sobolev space $H^m_{\eta_-, \eta_+} (\R)$ through the norm 
\begin{align*}
||f||_{H^m_{\eta_-, \eta_+}} = || \omega_{\eta_-, \eta_+} f||_{H^m}. 
\end{align*}
We note that for $\eta> 0$ we have $H^m_{0, \eta} (\R) = H^m (\R) \cap H^m_{\eta, \eta} (\R)$ as well as the following equivalence of norms
\begin{align}
||f||_{H^m_{0, \eta}} \sim ||f||_{H^m} + ||f||_{H^m_{\eta, \eta}}. \label{e: weighted norms equivalent}
\end{align}
This characterization of the one-sided weighted spaces is useful in obtaining estimates on operators defined by Fourier multipliers on these spaces, and we make use of this below in Section \ref{s: preconditioner estimates}. 

\subsection{Linear spreading speed and essential spectrum}
The linear spreading speed, marking the transition from pointwise growth to pointwise decay in the linearization about $u \equiv 0$, is characterized here by the location of simple pinched double roots of the \emph{dispersion relation}
\begin{equation}
\label{e: dispersion relation}
d_c^+(\lambda, \nu) = - \delta^2 \nu^4 + \nu^2 + c \nu + f'(0) - \lambda;
\end{equation}
see \cite{HolzerScheelPointwiseGrowth} for background. 

\begin{lemma}[Linear spreading speed]\label{l: pinched double root}
There exists $\delta_0> 0$ such that for $\delta\in(-\delta_0, \delta_0)$, there exists a critical speed $c_* = c_*(\delta)$, and an exponent $\eta = \eta_*(\delta) > 0$ for the critical weight such that the right dispersion relation \eqref{e: dispersion relation} satisfies the following properties.
\begin{enumerate}[label=$\mathit{(\roman*)}$]
\item \label{h: 1-1} Simple pinched double root: for $\lambda$, $\nu$ near $0\in\C$:
\begin{equation}
d_{c_*}^+(\lambda, -\eta_* + \nu) = \nu^2 \sqrt{1 - 12 \delta^2 f'(0)} - \lambda + \Rm{O}(\nu^3), 
\label{e: pinched double root}
\end{equation}
with $\sqrt{1 - 12 \delta^2 f'(0)} > 0$.
\item \label{h: 1-2} Minimal critical spectrum: if $d^+_{c_*} (i \kappa, -\eta_*+ik) = 0$ for some $\kappa, k \in \R$, then $\kappa = k = 0$. 
\item \label{h: 1-4} No unstable essential spectrum: if $d^+_{c_*}(\lambda, -\eta_* + ik) = 0$ for some $k \in \R$ and $\lambda \in \C$, then $\real{\lambda} \leq 0$. 
\end{enumerate}
\end{lemma}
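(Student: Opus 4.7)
The plan is to start from the classical Fisher--KPP case $\delta = 0$, where the dispersion relation is a quadratic $d_c^+(\lambda, \nu) = \nu^2 + c\nu + f'(0) - \lambda$ with an explicit pinched double root at $c = 2\sqrt{f'(0)}$, $\nu = -\sqrt{f'(0)}$, $\lambda = 0$. For $\delta \ne 0$ small, I would construct $c_*(\delta)$ and $\eta_*(\delta)$ via the implicit function theorem applied to the simultaneous system $F_1(c,\eta;\delta) := d_c^+(0, -\eta) = 0$ and $F_2(c,\eta;\delta) := \partial_\nu d_c^+(0, -\eta) = 0$, which encodes the pinched double root condition at $\lambda = 0$. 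A direct computation shows the Jacobian $\partial_{(c,\eta)}(F_1, F_2)$ at the $\delta = 0$ solution has determinant $2\sqrt{f'(0)} \ne 0$, so smooth branches $c_*(\delta), \eta_*(\delta)$ exist with the claimed values at $\delta = 0$.

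The key observation is that the system $F_1 = F_2 = 0$ can be solved essentially in closed form. Eliminating $c$ via $F_2 = 0$ gives $c = 2\eta - 4\delta^2 \eta^3$, and substituting into $F_1 = 0$ reduces to a quadratic in $\eta^2$:
\begin{equation*}
3\delta^2 \eta^4 - \eta^2 + f'(0) = 0.
\end{equation*}
The root consistent with $\eta_*(0) = \sqrt{f'(0)}$ is $\eta_*(\delta)^2 = (1 - \sqrt{1 - 12\delta^2 f'(0)})/(6\delta^2)$, which immediately yields the crucial identity $1 - 6\delta^2 \eta_*^2 = \sqrt{1 - 12\delta^2 f'(0)}$. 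For part \ref{h: 1-1}, Taylor-expanding $d_{c_*}^+$ around $(\lambda, \nu) = (0, -\eta_*)$ and using $d_{c_*}^+(0,-\eta_*) = 0$ and $\partial_\nu d_{c_*}^+(0,-\eta_*) = 0$ leaves
\begin{equation*}
d_{c_*}^+(\lambda, -\eta_* + \nu) = -\lambda + \tfrac{1}{2}\partial_\nu^2 d_{c_*}^+(0,-\eta_*)\,\nu^2 + \Rm{O}(\nu^3) = -\lambda + (1 - 6\delta^2 \eta_*^2)\nu^2 + \Rm{O}(\nu^3),
\end{equation*}
and the identity above rewrites the $\nu^2$ coefficient as $\sqrt{1 - 12\delta^2 f'(0)}$.

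For \ref{h: 1-2} and \ref{h: 1-4}, write $g(\nu) := -\delta^2 \nu^4 + \nu^2 + c_*\nu + f'(0)$, so any root of $d_{c_*}^+(\lambda, \nu) = 0$ on the line $\nu = -\eta_* + ik$, $k \in \R$, satisfies $\lambda = g(-\eta_* + ik)$. A direct calculation, using $F_1 = 0$ to eliminate $c_*\eta_*$ in favor of $\delta^2\eta_*^4$ and the expansion $\real(-\eta_* + ik)^4 = (\eta_*^2 - k^2)^2 - 4\eta_*^2 k^2$, produces the remarkable factorization
\begin{equation*}
\real g(-\eta_* + ik) = -k^2\bigl[\delta^2 k^2 + (1 - 6\delta^2 \eta_*^2)\bigr] = -k^2\bigl[\delta^2 k^2 + \sqrt{1 - 12\delta^2 f'(0)}\bigr].
\end{equation*}
For $\delta$ small the bracket is strictly positive, hence $\real g(-\eta_* + ik) \le 0$ with equality only at $k = 0$; this gives \ref{h: 1-4} immediately, and combined with $\imag g(-\eta_*) = 0$ also gives \ref{h: 1-2}.

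I do not foresee a genuine conceptual obstacle; the main content is arithmetic bookkeeping. The reason everything fits together so cleanly is that the $F_1 = F_2 = 0$ system is a biquadratic whose closed-form solution makes the identity $1 - 6\delta^2\eta_*^2 = \sqrt{1 - 12\delta^2 f'(0)}$ exact rather than perturbative, which in turn is what makes both the stated coefficient in \ref{h: 1-1} and the factorization of $\real g(-\eta_* + ik)$ hold precisely in the claimed form.
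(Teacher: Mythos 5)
Your proposal is correct and follows essentially the same route as the paper: both reduce the pinched double root system to the biquadratic $3\delta^2\eta^4 - \eta^2 + f'(0) = 0$, extract the same branch $\eta_*^2 = (1-\sqrt{1-12\delta^2 f'(0)})/(6\delta^2)$, use the exact identity $1-6\delta^2\eta_*^2 = \sqrt{1-12\delta^2 f'(0)}$ for the Taylor coefficient in \ref{h: 1-1}, and establish \ref{h: 1-2} and \ref{h: 1-4} from the same algebraic identity $\real d_{c_*}^+(\lambda, -\eta_*+ik) = -\real\lambda - \delta^2 k^4 - (1-6\delta^2\eta_*^2)k^2$ (the paper obtains it by Taylor-expanding $P(X)=d_{c_*}^+(\cdot,X)$ about $X=-\eta_*$, you by expanding $\real g(-\eta_*+ik)$ directly and using $F_1=0$, which is the same computation). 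The implicit-function-theorem detour at the start is harmless but superfluous given the closed-form solution.
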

We prove Lemma \ref{l: pinched double root} below, but first we explain how this lemma determines the essential spectrum for $\mathcal{L}(\delta)$, the linearization about the critical front in the exponentially weighted space with critical weight determined by this lemma. The operator $\mathcal{L}(\delta)$ has the precise form
\begin{equation*}
\label{e: operator L}
\Cal{L}(\delta) = \omega_* \Cal{A}(\delta) \omega_*^{-1} = - \delta^2 \partial_x^4 + \delta^2 a_3 \partial_x^3 + \left(1 + \delta^2 a_2\right) \partial_x^2 + a_1\partial_x + a_0,
\end{equation*}
where the coefficients $a_i(x;\delta)$ converge to limits $a_i^\pm(\delta)$ exponentially quickly when $x\to \pm \infty$, and are defined using the local notation $\varpi(x) := 1/\omega_*(x)$ by the following expressions:
\begin{equation}
\label{e: coefficients a}
a_3 = -4\frac{\varpi'}{\varpi},
\hspace{2em}
a_2 =-6\frac{\varpi^{''}}{\varpi},
\hspace{2em}
a_1 = c_* + 2\frac{\varpi'}{\varpi} - 4\delta^2\frac{\varpi^{'''}}{\varpi},
\hspace{2em}
a_0 = f'(q_*) + c_*\frac{\varpi'}{\varpi}+ \frac{\varpi^{''}}{\varpi} - \delta^2\frac{\varpi^{''''}}{\varpi}.
\end{equation}
We note that $\varpi^{(k)}(x)/\varpi(x) = (-\eta_*)^k$ for $x\geq 1$.

\begin{figure}
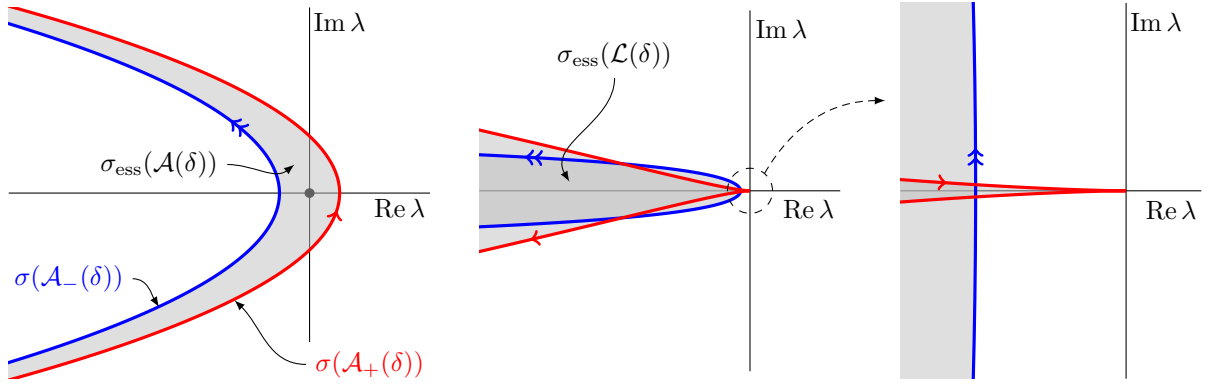

\hspace{\stretch{1}}
\includegraphics[scale=\figurescale]{\ImagePath{spectrum_A}}
\hspace{\stretch{2}}
\includegraphics[scale=\figurescale]{\ImagePath{spectrum_L}}
\hspace{\stretch{1}}
\caption{Left: the essential spectrum of the unweighted operator $\Cal{A}(\delta)$ is bounded by the Fredholm borders in blue and red. Middle and right: overview and zoom near the origin of the essential spectrum of the weighted operator $\Cal{L}(\delta)$.}
\label{f: spectrum A/L}
\end{figure}

For such a linear operator, the essential spectrum is delimited by the two Fredholm borders, which are defined using the asymptotic dispersion relations. More precisely, the boundaries of the essential spectrum of $\Cal{L}$ are determined by the essential spectrum of the limiting operators $\Cal{L}_\pm$, obtained by sending $x\to \pm \infty$ \cite{KapitulaPromislow, FiedlerScheel}. From the construction of $c_*$, $\eta_*$ (see the proof of Lemma \ref{l: pinched double root} below), we have at $+\infty$:
\begin{equation}
\label{e: operator L+}
\Cal{L}_+(\delta) = -\delta^2 \partial_x^4 + 4 \eta_* \delta^2 \partial_x^3 +(1- 6\delta^2 \eta_*^2) \partial_x^2.
\end{equation}
The spectrum of this constant coefficient operator is, via Fourier transform, readily seen to be marginally stable; see the red curves of Figure \ref{f: spectrum A/L}. Notice that for $\delta$ small, $\eta=\eta_*$  is the only reasonable value for which $\Cal{L}_+$ has a non positive zeroth order term; any other choice of $\eta(\delta)$ will lead to spectral instability for $\Cal{L}(\delta)$.
At $-\infty$, there is no contribution from $\omega_*$, hence
\begin{equation*}
\Cal{L}_- = \Cal{A}_- = -\delta^2 \partial_x^4 + \partial_x^2 + c_* \partial_x + f'(1)
\end{equation*}
has a stable spectrum, with spectral gap $f'(1)<0$. Via the Fourier transform, this spectrum is determined by the asymptotic dispersion relation
\begin{equation*}
d_{c_*}^- (\lambda, \nu) = - \delta^2 \nu^4 + \nu^2 + c_* (\delta) \nu + f'(1) - \lambda. 
\end{equation*}
\begin{lemma}[Stability on the left]\label{l: left stability}
If $d^-_{c_*} (\lambda, ik) = 0$ for some $k \in \R$, then $\real{\lambda} < 0$. 
\end{lemma}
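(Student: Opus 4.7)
The plan is to prove this by direct computation, since the left dispersion relation is constant-coefficient and the condition $f'(1) < 0$ gives a clean spectral gap.

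First, I would substitute $\nu = ik$ with $k \in \R$ into $d_{c_*}^-(\lambda, \nu) = -\delta^2 \nu^4 + \nu^2 + c_*(\delta) \nu + f'(1) - \lambda$. Using $(ik)^4 = k^4$, $(ik)^2 = -k^2$, and $(ik) = ik$, the equation $d_{c_*}^-(\lambda, ik) = 0$ yields the explicit formula
\begin{equation*}
\lambda = -\delta^2 k^4 - k^2 + i c_*(\delta) k + f'(1).
\end{equation*}

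Next I would take real parts to obtain
\begin{equation*}
\real{\lambda} = -\delta^2 k^4 - k^2 + f'(1) \leq f'(1).
\end{equation*}
Since the Fisher-KPP hypothesis on $f$ requires $f'(1) < 0$, this immediately gives $\real{\lambda} < 0$ uniformly in $k \in \R$ and $\delta \in \R$.

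There is no real obstacle here; the whole content of the lemma is that the asymptotic state $u \equiv 1$ is linearly stable (reflected in $f'(1) < 0$), and the fourth-order and second-order terms only push the spectrum further left. One minor point worth noting in the write-up is that this bound is independent of $\delta$, so in particular it holds uniformly for $\delta \in (-\delta_0, \delta_0)$ with the $\delta_0$ from Lemma \ref{l: pinched double root}, and gives a spectral gap of size at least $|f'(1)|$ for $\Cal{L}_-$ (equivalently, $\Cal{A}_-$, since the weight $\omega_*$ is trivial at $-\infty$).
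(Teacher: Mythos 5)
Your proof is correct and is exactly the direct computation the paper has in mind; the paper in fact states Lemma~\ref{l: left stability} without writing out a proof, treating the inequality $\real{\lambda} = -\delta^2 k^4 - k^2 + f'(1) \le f'(1) < 0$ as immediate from the constant-coefficient form of $d_{c_*}^-$ and the sign condition $f'(1) < 0$. Your added remark that the bound is uniform in $\delta$ and yields a spectral gap of size $|f'(1)|$ matches the paper's comment that $\Cal{L}_-$ "has a stable spectrum, with spectral gap $f'(1) < 0$."
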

Lemmas \ref{l: pinched double root} and \ref{l: left stability} together with Palmer's theorem \cite{Palmer1, Palmer2} imply that the essential spectrum of $\mathcal{L}(\delta)$ is marginally stable, touching the imaginary axis only at the origin \cite{KapitulaPromislow, FiedlerScheel}; see Figure \ref{f: spectrum A/L}.

\begin{proof}[Proof of Lemma \ref{l: pinched double root}]
We first look for $c_*, \eta_* > 0$ which satisfy \eqref{e: pinched double root}. The polynomial $\nu \mapsto d_c(\lambda,\nu)$ at $\lambda = 0$ admits $-\eta$ as a double root if and only if
\begin{equation*}
\label{e: double root system}
\left\lbrace
\begin{array}{l}
0 = d_c^+(0, -\eta) = -\delta^2\,\eta^4 + \eta^2 - c \,\eta + f'(0),\\
0 = \partial_\nu d_c^+(0, -\eta) = 4\delta^2\, \eta^3 - 2\,\eta + c.
\end{array}
\right.
\end{equation*}
We remove $c$ from the first equation by using the second one, and find a quadratic equation satisfied by $\eta^2$, which has roots $\pm \eta_1, \pm \eta_2$ where 
\begin{equation}
\label{e: definition eta}
\eta_1 := \frac{1}{\absolu{\delta}\sqrt{6}} \sqrt{1 + \sqrt{1-12\delta^2 f'(0)}} \sim \frac{1}{\absolu{\delta}\sqrt{3}},
\hspace{4em}
\eta_2 := \frac{1}{\absolu{\delta}\sqrt{6}} \sqrt{1 - \sqrt{1-12\delta^2 f'(0)}} \sim \sqrt{f'(0)}.
\end{equation}
where the asymptotics hold for $\delta \to 0$.
The choice $\eta_* =\eta_2$ and $c_* = 2\eta_* - 4 \delta^2 \eta_*^3$ leads to
\begin{equation*}
c_*(\delta) = 2\sqrt{f'(0)} - \delta^2 f'(0)^{3/2} +\Rm{O}(\delta^4).
\end{equation*}
The other double roots do not determine linear spreading speeds, as they are not \textit{pinched}; see \cite{HolzerScheelPointwiseGrowth} for details. We now fix $\delta_0 = 1/\sqrt{12f'(0)}$. Then for $\absolu{\delta} < \delta_0$, and using the expression of $\eta_*= \eta_2$, we obtain:
\begin{equation*}
\frac{\partial_\nu^2 d_c^+(0, -\eta_*)}{2!} = 1 - 6 \delta^2 \eta_*^2 = \sqrt{1 - 12 \delta^2 f'(0)} > 0.
\end{equation*}
Hence $(\lambda, \nu) = (0, -\eta_*)$ is a simple double root and \eqref{e: pinched double root} is proved. Such an expansion together with the lack of unstable essential spectrum ensures that this root is pinched; see \cite[Lemma 4.4]{HolzerScheelPointwiseGrowth}. Alternatively, Lemma \ref{l: ansatz} below directly proves that the root is pinched.

We now check the two remaining conditions in Lemma \ref{l: pinched double root}. We equate the polynomial $P(X) = d_*^+(\lambda,X)$ with its Taylor series centered at the double root $X = -\eta_*$ to obtain
\begin{equation}
\label{e: double root 1}
\real{d_*^+(\lambda, -\eta_* + ik)} = - \real \lambda + \real\bigg(\sum_{j=0}^4 (ik)^j\frac{P^{(j)}(-\eta_*)}{j!}  \bigg) = - \real \lambda -\delta^2 k^4 - (1 - 6\delta^2 \eta_*^2) k^2 \leq 0
\end{equation}
if $\real \lambda > 0$, since from \eqref{e: definition eta}, we have $1 - 6\delta^2 \eta_*^2 = \sqrt{1 - 12 \delta^2 f'(0)}\geq 0$. 
This proves hypothesis \ref{h: 1-4}. Furthermore, the inequality in \eqref{e: double root 1} is an equality if and only if $k=0$ and $\lambda = 0$, for which we have $d_*^+(0, -\eta_*) = 0$. Hence, hypothesis \ref{h: 1-2} is proved.
\end{proof}

\subsection{Preconditioner estimates} \label{s: preconditioner estimates}

Here we prove the estimates we will need on our preconditioner $(1-\delta^2 \partial_x^2)^{-1}$, by directly examining its Fourier symbol. 
\begin{lemma}\label{l: preconditioner estimates}
Fix $\eta > 0$ sufficiently small, and fix an integer $m$. Then there exist constants $\delta_0 > 0$ and $C = C(\delta_0, \eta)$ such that if $|\delta| < \delta_0$, 
\begin{align}
||(1- \delta^2 \partial_x^2)^{-1}||_{L^2_{0, \eta} \to L^2_{0, \eta}} &\leq C, \label{e: precond L2 L2 bound} \\
||(1- \delta^2 \partial_x^2)^{-1} ||_{H^m_{0, \eta} \to H^{m+1}_{0, \eta}} &\leq \frac{C}{|\delta|}. \label{e: precond Hm Hmplus1 bound}
\end{align}
\end{lemma}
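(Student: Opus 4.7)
The plan is to compute the Fourier symbol of the resolvent and reduce the one-sided weighted estimates to estimates on $L^2(\R)$ and on the symmetric exponentially weighted space $L^2_{\eta,\eta}(\R)$, using the norm equivalence \eqref{e: weighted norms equivalent}. The Fourier symbol of $(1-\delta^2\partial_x^2)^{-1}$ is $m_\delta(\xi) = (1+\delta^2\xi^2)^{-1}$.

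First I would handle the unweighted part. Plancherel immediately gives $\|(1-\delta^2\partial_x^2)^{-1}\|_{H^m\to H^m}\le 1$ because $0<m_\delta(\xi)\le 1$. For the regularity-gaining estimate it suffices to bound $|\xi|\,m_\delta(\xi)$, and AM-GM yields $2|\delta\xi|\le 1+\delta^2\xi^2$, so $|\xi|\,m_\delta(\xi)\le 1/(2|\delta|)$, which translates via Plancherel into $\|(1-\delta^2\partial_x^2)^{-1}\|_{H^m\to H^{m+1}}\le C/|\delta|$.

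Next I would handle the weighted part. Conjugating gives
\begin{equation*}
e^{\eta x}(1-\delta^2\partial_x^2)^{-1}e^{-\eta x}=\bigl(1-\delta^2(\partial_x-\eta)^2\bigr)^{-1},
\end{equation*}
whose Fourier symbol is $\sigma_\delta(\xi)=\bigl(1+\delta^2(\xi+i\eta)^2\bigr)^{-1}$. Writing out real and imaginary parts,
\begin{equation*}
\bigl|1+\delta^2(\xi+i\eta)^2\bigr|^2=\bigl(1-\delta^2\eta^2+\delta^2\xi^2\bigr)^2+4\delta^4\eta^2\xi^2.
\end{equation*}
Choosing $\delta_0,\eta$ small enough that $\delta^2\eta^2\le 1/2$, I get $1-\delta^2\eta^2+\delta^2\xi^2\ge \tfrac{1}{2}(1+\delta^2\xi^2)$, hence $|\sigma_\delta(\xi)|\le 2 m_\delta(\xi)$ uniformly in small $\delta$. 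With this pointwise comparison, exactly the same Plancherel argument as in the unweighted case yields $\|(1-\delta^2\partial_x^2)^{-1}\|_{H^m_{\eta,\eta}\to H^m_{\eta,\eta}}\le C$ and $\|(1-\delta^2\partial_x^2)^{-1}\|_{H^m_{\eta,\eta}\to H^{m+1}_{\eta,\eta}}\le C/|\delta|$.

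Combining via $\|f\|_{H^m_{0,\eta}}\sim\|f\|_{H^m}+\|f\|_{H^m_{\eta,\eta}}$ and noting that the unweighted spaces are invariant under $(1-\delta^2\partial_x^2)^{-1}$ gives \eqref{e: precond L2 L2 bound} and \eqref{e: precond Hm Hmplus1 bound}. The only delicate step is the lower bound on $|1+\delta^2(\xi+i\eta)^2|$; this is where the smallness of $\eta$ (and $\delta$) enters, and it is really the whole content of the lemma, since once this pointwise comparison of complex-valued symbols with the real symbol $m_\delta$ is in place, the AM-GM trick that produces the $1/|\delta|$ loss when gaining one derivative applies verbatim in both spaces.
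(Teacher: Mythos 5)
Your proof is correct and takes essentially the same approach as the paper: conjugate by the exponential weight, estimate the Fourier symbol of $1-\delta^2(\partial_x-\eta)^2$ under the smallness condition $\delta^2\eta^2\le 1/2$, and combine the unweighted and symmetric-weight bounds via the norm equivalence \eqref{e: weighted norms equivalent}. Your pointwise comparison $|\sigma_\delta(\xi)|\le 2m_\delta(\xi)$ followed by the AM--GM bound $|\xi|m_\delta(\xi)\le 1/(2|\delta|)$ is just a slightly tidier packaging of the same computation carried out in the paper.
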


\begin{proof}
By \eqref{e: weighted norms equivalent}, it suffices to prove the estimates separately for $L^2$ and for $L^2_{\eta, \eta}$ with $\eta > 0$ small. 
%
%
Since multiplication by $e^{\eta \cdot}$ is an isomorphism from $L^2_{\eta, \eta} (\R)$ to $L^2 (\R)$, to prove estimates for $(1-\delta^2 \partial_x^2)^{-1}$ on $L^2_{\eta, \eta}$, it suffices to consider the inverse of the conjugate operator 
\begin{align*}
e^{\eta \cdot} (1-\delta^2 \partial_x^2) e^{-\eta \cdot} = 1 - \delta^2 (\partial_x-\eta)^2
\end{align*}
acting on $L^2(\R)$. This is the advantage of using \eqref{e: weighted norms equivalent} to separate estimates on $L^2_{0, \eta} (\R)$ into estimates on $L^2 (\R)$ and $L^2_{\eta, \eta} (\R)$: the conjugate operator arising from studying $(1- \delta^2 \partial_x^2)$ on $L^2_{\eta, \eta} (\R)$ has constant coefficients since the weight is a fixed exponential function, and so we can directly estimate its inverse using the Fourier transform. 

Fix $\eta \geq 0$. By Plancherel's theorem, 
\begin{align*}
\| (1-\delta^2 (\partial_x - \eta)^2)^{-1} f||_{L^2} = \left\| \frac{1}{1 - \delta^2 (i \cdot - \eta)^2} \hat{f} (\cdot) \right\|_{L^2} \leq \sup_{k \in \R} \left| \frac{1}{1 - \delta^2 (ik - \eta)^2}  \right| ||\hat{f}||_{L^2}. 
\end{align*}
Let $\delta_0 = 1/(\sqrt{2} \eta)$, so that $\delta_0^2 \eta^2 = 1/2$, and hence if $|\delta| < \delta_0$,
\begin{align}
1 + \delta^2(k^2 - \eta^2) = 1 - \delta^2 \eta^2 + \delta^2 k^2 \geq \frac{1}{2} + \delta^2 k^2. \label{e: precond symbol estimate}
\end{align}
Then for any $\delta$ with $|\delta| < \delta_0$, we have
\begin{align*}
\left| \frac{1}{1 - \delta^2 (ik - \eta)^2}  \right|^2 = \frac{1}{(1+\delta^2(k^2 - \eta^2))^2 + 4 k^2 \delta^4 \eta^2 } \leq \frac{1}{(1+\delta^2(k^2 - \eta^2))^2} \leq \frac{1}{\frac{1}{2} + \delta^2 k^2} \leq C,
\end{align*}
with $C$ depending only on $\delta_0$ and $\eta$, and so 
\begin{align*}
\|(1-\delta^2 \partial_x^2)^{-1}\|_{L^2_{\eta, \eta} \to L^2_{\eta, \eta}} \leq C.
\end{align*}
Since this holds for any fixed $0 \leq \eta < 1$, in particular also for $\eta = 0$, we obtain \eqref{e: precond L2 L2 bound} by combining these estimates with \eqref{e: weighted norms equivalent}. 

Now we prove \eqref{e: precond Hm Hmplus1 bound}, again by obtaining bounds on the Fourier symbol of the inverse of the conjugate operator for $\eta > 0$ and $\eta = 0$. By Plancherel's theorem, for any fixed $0 \leq \eta < 1$, we have 
\begin{align*}
||(1-\delta^2 (\partial_x^2-\eta))^{-1} f||_{H^{m+1}} = \left\| \frac{1}{1 - \delta^2 (i \cdot - \eta)^2} \langle \cdot \rangle^{m+1} \hat{f} (\cdot) \right\| \leq \sup_{k \in \R} \left| \frac{1}{1- \delta^2(ik-\eta)^2} \langle k \rangle \right| \| \hat{f} \|_{H^m}. 
\end{align*}
Again, let $\delta_0 = 1/(\sqrt{2} \eta)$. Then, by \eqref{e: precond symbol estimate}, we have 
\begin{align*}
\left| \frac{1}{1- \delta^2(ik-\eta)^2} \langle k \rangle \right|^2 = \frac{\delta^2 +\delta^2 k^2}{(1+\delta^2(k^2-\eta^2))^2 + 4 k^2 \delta^4 \eta^2} \frac{1}{\delta^2} \leq \frac{\delta^2+\delta^2 k^2}{\left( \frac{1}{2} + \delta^2 k^2 \right)^2} \frac{1}{\delta^2} \leq \frac{C}{\delta^2}, 
\end{align*}
from which we obtain 
\begin{align*}
||(1-\delta^2 (\partial_x^2 - \eta))^{-1} f||_{H^{m+1}} \leq \frac{C}{|\delta|} ||f||_{L^2}. 
\end{align*}
Since this holds for $\eta \geq 0$, we obtain \eqref{e: precond Hm Hmplus1 bound} from the equivalence of norms \eqref{e: weighted norms equivalent}. 
\end{proof}

We now state and prove the estimates we will need on the difference between the preconditioner and the identity, $T(\delta) = (1-\delta^2 \partial_x^2)^{-1} -1$.
\begin{lemma} \label{l: T delta estimates}
Fix $\eta > 0$ sufficiently small. There exists a constant $\delta_0$ such that the mapping $\delta \mapsto T(\delta)$ is continuous from $(-\delta_0, \delta_0)$ to $\mathcal{B}(H^1_{0, \eta}, L^2_{0, \eta})$, the space of bounded linear operators from $H^1_{0, \eta} (\R)$ to $L^2_{0, \eta} (\R)$ with the operator norm topology. 
\end{lemma}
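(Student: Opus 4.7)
The plan is to split the continuity question at $\delta \in (-\delta_0, \delta_0)$ into two regimes. Since $T(0) = 0$, continuity at the origin reduces to showing $\|T(\delta)\|_{H^1_{0,\eta} \to L^2_{0,\eta}} \to 0$ as $\delta \to 0$. For continuity at any $\delta_1 \neq 0$, I would instead exploit the resolvent-style identity
\begin{equation*}
T(\delta) - T(\delta_1) = (1-\delta^2\partial_x^2)^{-1} - (1-\delta_1^2\partial_x^2)^{-1} = (\delta^2 - \delta_1^2)(1-\delta^2\partial_x^2)^{-1}\partial_x^2(1-\delta_1^2\partial_x^2)^{-1},
\end{equation*}
together with Lemma \ref{l: preconditioner estimates}: this exhibits $T(\delta) - T(\delta_1)$ as the bounded composition $H^1_{0,\eta} \xrightarrow{(1-\delta_1^2\partial_x^2)^{-1}} H^2_{0,\eta} \xrightarrow{\partial_x^2} L^2_{0,\eta} \xrightarrow{(1-\delta^2\partial_x^2)^{-1}} L^2_{0,\eta}$, with operator norm dominated by $C|\delta^2 - \delta_1^2|/|\delta_1|$, which vanishes as $\delta \to \delta_1$.

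For the remaining continuity at $\delta = 0$, I would follow the same strategy used to prove Lemma \ref{l: preconditioner estimates}. Specifically, I would apply the equivalence of norms \eqref{e: weighted norms equivalent} to reduce to separate bounds on $L^2(\R)$ and on $L^2_{\eta,\eta}(\R)$, and then conjugate by $e^{\eta \cdot}$ in the latter case to reduce to a constant-coefficient operator $1-\delta^2(\partial_x-\eta)^2$ on $L^2(\R)$. The Fourier multiplier of the resulting preconditioned operator minus the identity is
\begin{equation*}
m_\delta(k) = \frac{\delta^2(ik-\eta)^2}{1-\delta^2(ik-\eta)^2},
\end{equation*}
so by Plancherel the $H^1 \to L^2$ operator norm is controlled by $\sup_{k \in \R} |m_\delta(k)|/\langle k\rangle$. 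Using the denominator bound \eqref{e: precond symbol estimate}, this supremum is dominated by that of $\delta^2(k^2+\eta^2)/[(\tfrac{1}{2}+\delta^2 k^2)\sqrt{1+k^2}]$, which a direct optimization (for instance via the substitution $u = \delta^2 k^2$, or by splitting at $|k| = \delta^{-1}$) shows to be $O(\delta)$ as $\delta \to 0$.

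The main obstacle I would expect is that $T(\delta)$ does \emph{not} tend to zero in $\mathcal{B}(L^2, L^2)$ as $\delta \to 0$: its symbol $\delta^2 k^2/(1+\delta^2 k^2)$ approaches $1$ pointwise as $k\to \infty$ for any fixed $\delta > 0$, so no uniform-in-$k$ bound on the symbol alone yields the claim. The gain of smallness is therefore forced to come from the extra derivative of regularity in the domain $H^1_{0,\eta}$, which inserts precisely the factor $\langle k\rangle^{-1}$ needed to beat the symbol's pointwise limit. Once this Fourier estimate is in hand, combining with the resolvent identity at nonzero $\delta_1$ yields continuity of $\delta \mapsto T(\delta)$ on all of $(-\delta_0, \delta_0)$.
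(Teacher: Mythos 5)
Your proof is correct, and it coincides with the paper's at $\delta = 0$: both reduce via the norm equivalence \eqref{e: weighted norms equivalent} to the conjugate constant-coefficient operator, compute the Fourier symbol $m_\delta(k) = \delta^2(ik-\eta)^2 / (1 - \delta^2(ik-\eta)^2)$, and use the denominator bound \eqref{e: precond symbol estimate} together with the extra $\langle k\rangle^{-1}$ from the $H^1 \to L^2$ setting to show $\sup_k |m_\delta(k)|/\langle k\rangle = \mathrm{O}(\delta)$. Your diagnosis of why the extra derivative is essential (the symbol of $T(\delta)$ alone does not vanish uniformly in $k$) is exactly the point and is worth stating. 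Where you diverge from the paper is continuity at $\delta_1 \neq 0$. The paper rewrites $1 - \delta^2(\partial_x - \eta)^2 = \delta^2(\delta^{-2} - (\partial_x - \eta)^2)$ and cites analyticity of the resolvent map $\mu \mapsto (\mu - (\partial_x - \eta)^2)^{-1}$ on the resolvent set as a black box. You instead unwind that black box via the second resolvent identity
\begin{equation*}
T(\delta) - T(\delta_1) = (\delta^2 - \delta_1^2)\,(1-\delta^2\partial_x^2)^{-1}\,\partial_x^2\,(1-\delta_1^2\partial_x^2)^{-1},
\end{equation*}
and then read off the operator-norm bound $C\,|\delta^2 - \delta_1^2|/|\delta_1|$ directly from the two estimates in Lemma \ref{l: preconditioner estimates} (the $H^1_{0,\eta} \to H^2_{0,\eta}$ bound for $(1-\delta_1^2\partial_x^2)^{-1}$ and the $L^2_{0,\eta} \to L^2_{0,\eta}$ bound for $(1-\delta^2\partial_x^2)^{-1}$), together with boundedness of $\partial_x^2 : H^2_{0,\eta} \to L^2_{0,\eta}$. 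This is more self-contained (no appeal to spectral calculus) and yields an explicit local Lipschitz rate away from $\delta_1 = 0$, at the cost of a slightly longer argument; it also works directly on the weighted spaces without first conjugating, since Lemma \ref{l: preconditioner estimates} is already formulated there. Either route closes the lemma.
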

\begin{proof}
As in the proof of Lemma \ref{l: preconditioner estimates}, it suffices to establish continuity in $\delta$ of the conjugate operator $T_\eta (\delta) := (1-\delta^2(\partial_x - \eta)^2)^{-1} - 1$ on $L^2(\R)$ for $\eta \geq 0$ sufficiently small. For $\delta$ nonzero, we write 
\begin{align*}
1-\delta^2 (\partial_x - \eta)^2 = \delta^2 \left( \frac{1}{\delta^2} - (\partial_x - \eta)^2 \right). 
\end{align*} 
By standard spectral theory, we therefore see that $T_\eta(\delta)$ is continuous in $\delta$ provided $\delta$ is nonzero and $1/\delta^2$ is in the resolvent set of the operator $(\partial_x - \eta)^2$. Computing the spectrum of this operator with the Fourier transform, one readily finds that there exists a $\delta_1$ depending on $\eta$ such that this continuity holds for $0 < \delta < \delta_1$. 

We now establish continuity at $\delta = 0$ via direct estimates on the Fourier multiplier. 
\begin{align*}
\hat{T}_\eta(\delta, k) = \frac{\delta^2 (ik - \eta)^2}{1 - \delta^2 (ik - \eta)^2}.
\end{align*}
Since we are proving continuity of $T_{\eta} (\delta)$ from $H^1$ to $L^2$, we gain a helpful factor of $\langle k \rangle$ --- that is, it suffices to estimate $|\hat{T}_\eta(\delta, k)|/\langle k \rangle$. By \eqref{e: precond symbol estimate}, for $|\delta| < \delta_0 := \min \{ \delta_1, 1/\sqrt{2} \eta \}$ we have
\begin{align*}
\left| \hat{T}_{\eta} (\delta, k) \frac{1}{\langle k \rangle} \right| = \left| \frac{\delta^2 (ik - \eta)^2}{1 - \delta^2 (ik - \eta)^2} \frac{1}{\langle k \rangle} \right| &= \frac{\delta^2 \sqrt{(\eta^2 - k^2)^2+4k^2 \eta^2}}{\sqrt{(1-\delta^2(\eta^2-k^2))^2 + 4 \delta^4 k^2 \eta^2}} \frac{|\delta|}{(\delta^2 + \delta^2 k^2)^{1/2}} \\
&\leq |\delta| \left( \frac{\delta^4 \eta^4 + \delta^4 k^4 + 2 \delta^4 k^2 \eta^2}{\left[\left( \frac{1}{2} + \delta^2 k^2 \right)^2 + 4 \delta^4 k^2 \eta^2 \right] (\delta^2 + \delta^2 k^2)} \right)^{1/2}, 
\end{align*} 
using \eqref{e: precond symbol estimate} in the denominator. We now split the factor in the parenthesis, first estimating 
\begin{align*}
\frac{\delta^4 \eta^4 + 2 \delta^4 k^2 \eta^2}{\left[\left( \frac{1}{2} + \delta^2 k^2 \right)^2 + 4 \delta^4 k^2 \eta^2 \right] (\delta^2 + \delta^2 k^2)} \leq \frac{\delta^4 \eta^4 + 2 \delta^4 k^2 \eta^2}{\frac{1}{4} (\delta^2 + \delta^2 k^2)} = \frac{\delta^2 \eta^4 + 2 \delta^2 k^2 \eta^2}{\frac{1}{4} (1+k^2)} \leq C,
\end{align*}
where $C$ depends only on $\delta_0$ and $\eta$. For the remaining term, we have 
\begin{align*}
\frac{\delta^4 k^4}{\left[\left( \frac{1}{2} + \delta^2 k^2 \right)^2 + 4 \delta^4 k^2 \eta^2 \right] (\delta^2 + \delta^2 k^2)} \leq \frac{\delta^4 k^4}{\left( \frac{1}{2} + \delta^2 k^2 \right)^2 (\delta^2 k^2)} = \frac{\delta^2 k^2}{\left(\frac{1}{2} + \delta^2 k^2 \right)^2} \leq C,
\end{align*}
again with constant $C$ only depending on $\eta$ and $\delta_0$. From this estimate on the Fourier symbol together with Plancherel's theorem, we obtain
\begin{align*}
||T_\eta(\delta)||_{H^1 \to L^2} \leq C |\delta|, 
\end{align*}
for $|\delta| < \delta_0$, and so in particular $\delta \mapsto T_\eta(\delta)$ is continuous at $\delta = 0$, which completes the proof of the lemma. 
\end{proof}

\subsection{Fredholm properties at $\delta = 0$}
\label{s: Fred Prop L0}
We will further need the Fredholm properties of $\Cal{L}(0)$, which is the linearization in the weighted space of the classical FKPP problem $\delta=0$. The classical Fisher-KPP front, at $\delta = 0$, may be constructed via simple phase plane methods (see \cite{Sattinger}), and we denote this front by $q_0$. In the following two lemmas, we describe the kernel, the cokernel and the range of $\Cal{L}(0)$. They will both be needed for the existence of the critical front $q_*(\cdot;\delta)$ in Section \ref{s: existence}, and for the control of small eigenvalues in Section \ref{s: small eigenvalues}.

\begin{lemma}\label{l: L0 fredholm properties}
For $\eta > 0$, the operator $\mathcal{L}(0) : H^2_{0, \eta} (\R) \to L^2_{0, \eta} (\R)$ is Fredholm with index $-1$, with trivial kernel and with cokernel spanned by $\varphi(x) = (\omega_*(x; 0))^{-1} e^{c_*(0)x} q_0'(x)$. 
\end{lemma}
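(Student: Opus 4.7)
The plan is to exploit the conjugation $\mathcal{L}(0) = \omega_* \mathcal{A}(0) \omega_*^{-1}$, where $\mathcal{A}(0) = \partial_x^2 + c_*(0)\partial_x + f'(q_0)$ is the classical Fisher--KPP linearization, and to further conjugate by $\omega_{0,\eta}$ to reduce to an unweighted problem on $H^2(\R) \to L^2(\R)$. The Fredholm properties then follow from standard ODE theory (Palmer's theorem \cite{Palmer1, Palmer2}, as invoked in Section \ref{s: preliminaries}) applied to the asymptotic spatial eigenvalue problems. At $-\infty$, the asymptotic symbol is $\nu^2 + c_*(0)\nu + f'(1)$; since $f'(1) < 0$, this has one positive and one negative real root, giving unstable dimension $1$. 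At $+\infty$, Lemma \ref{l: pinched double root} (specialized to $\delta = 0$) tells us $\nu^2 + c_*(0)\nu + f'(0)$ has a double root at $-\eta_*(0)$, so after conjugation by the critical weight $\omega_*$ the asymptotic operator reduces to $\partial_x^2$, and a further conjugation by $e^{\eta x}$ yields $(\partial_x - \eta)^2$ with double root at $\nu = \eta > 0$, giving unstable dimension $2$. The Morse index difference therefore yields Fredholm index $1 - 2 = -1$.

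For the kernel, suppose $u \in H^2_{0,\eta}(\R)$ satisfies $\mathcal{L}(0) u = 0$, and set $w := \omega_*^{-1} u$, so $w$ solves the second-order ODE $\mathcal{A}(0) w = 0$. The asymptotic characteristic equation at $+\infty$ has a double root at $-\eta_*(0)$, so every nontrivial solution of this ODE behaves at $+\infty$ like a nonzero linear combination of $e^{-\eta_*(0) x}$ and $x e^{-\eta_*(0) x}$. But $u = \omega_* w$ belongs to $L^2_{0,\eta}(\R)$ only if $w$ decays strictly faster than $e^{-(\eta_*(0) + \eta) x}$ at $+\infty$, which neither fundamental solution does. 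Hence $w \equiv 0$ and so $u \equiv 0$.

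To identify the cokernel, I would compute the formal $L^2$-adjoint, $\mathcal{L}(0)^* = \omega_*^{-1} \mathcal{A}(0)^* \omega_*$, where $\mathcal{A}(0)^* = \partial_x^2 - c_*(0)\partial_x + f'(q_0)$. Setting $\psi = \omega_* \varphi$, the equation $\mathcal{L}(0)^* \varphi = 0$ is equivalent to $\mathcal{A}(0)^* \psi = 0$. A direct substitution shows that if $w$ satisfies $\mathcal{A}(0) w = 0$, then $e^{c_*(0) x} w$ satisfies $\mathcal{A}(0)^* (e^{c_*(0) x} w) = 0$. Since translation invariance gives $\mathcal{A}(0) q_0' = 0$, we obtain a candidate cokernel element
\begin{equation*}
\varphi(x) = \omega_*(x;0)^{-1} e^{c_*(0) x} q_0'(x).
\end{equation*}
It remains to verify that $\varphi$ lies in the relevant dual space $L^2_{0,-\eta}(\R)$: at $-\infty$, $q_0'$ decays exponentially (picked out by the positive spatial eigenvalue of the asymptotic problem there) and the factor $e^{c_*(0) x}$ is bounded, while $\omega_*^{-1} = 1$; at $+\infty$, the known asymptotics $q_0'(x) \sim -\eta_*(0) x e^{-\eta_*(0) x}$ combine with $\omega_*^{-1} e^{c_*(0) x} = e^{(c_*(0) - \eta_*(0)) x} = e^{\eta_*(0) x}$ to give $\varphi(x) \sim -\eta_*(0) x$, which is manifestly in $L^2_{0,-\eta}$ for any $\eta > 0$. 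Combined with the Fredholm index $-1$ and the trivial kernel, this forces $\mathrm{coker}\,\mathcal{L}(0) = \Span\{\varphi\}$.

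The main subtlety I anticipate is the borderline nature of the Fredholm property: the unweighted asymptotic symbol at $+\infty$ has a non-hyperbolic double root at $-\eta_*(0)$, so Fredholm theory only becomes available after the two-step conjugation by $\omega_* e^{\eta \cdot}$ separates the spatial eigenvalues from the imaginary axis. Care is needed in tracking exactly which weighted space the cokernel element lives in, and in showing the $x$-prefactor of $q_0'$ at $+\infty$ is still absorbed by the weight $e^{-\eta x}$ for any $\eta > 0$.
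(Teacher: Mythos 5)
Your proposal is correct and follows essentially the same route as the paper: conjugate by the one-sided weight $\omega_{0,\eta}$ to obtain an unweighted problem, compute the Fredholm index via Morse indices of the hyperbolic asymptotic systems, rule out kernel elements by decay rates at $\pm\infty$, and obtain the cokernel element by the observation that $\Cal{A}(0)^*\bigl(e^{c_*(0)x}w\bigr) = e^{c_*(0)x}\Cal{A}(0)w$ (which is exactly the self-adjointness of $e^{c_*/2\cdot}\Cal{A}(0)e^{-c_*/2\cdot}$ exploited in the paper). The one small deviation is that you compute the Morse index difference directly at $\lambda = 0$ (giving $1 - 2 = -1$), where the paper instead starts to the right of the essential spectrum and tracks the index across the Fredholm border $\sigma(\Cal{L}_{\eta,+})$; and you conclude the cokernel by dimension counting, rather than explicitly eliminating the second fundamental solution $\phi$ via its growth at $-\infty$ — both are valid, and yours is marginally more economical.
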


\begin{figure}
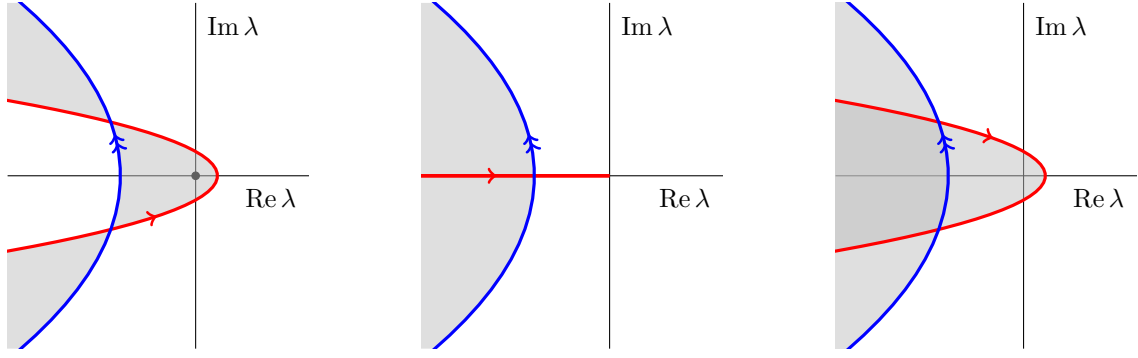

\hspace{\stretch{1}}
\includegraphics[scale=\figurescale]{\ImagePath{spectrum_eta_negative}}
\hspace{\stretch{2}}
\includegraphics[scale=\figurescale]{\ImagePath{spectrum_eta_zero}}
\hspace{\stretch{2}}
\includegraphics[scale=\figurescale]{\ImagePath{spectrum_eta_positive}}
\hspace{\stretch{1}}
\caption{Essential spectrum of $\Cal{L}_\eta(0)$ for $\eta<0$ at left, $\eta=0$ at middle, and $\eta>0$ at right. For $\eta>0$, the positive Fredholm border has reverse orientation so that $\fred(\Cal{L}_\eta) = -1$, while $\fred(\Cal{L}_\eta) = 1$ for $\eta<0$.}
\label{f: spectrum L eta}
\end{figure}

\begin{proof}
Recall that the asymptotic operators are given by $\Cal{L}_+(0) = \partial_x^2$ and $\Cal{L}_-(0) = \partial_x^2 + c_*(0) \partial_x + f'(1)$. For $\eta > 0$, define the conjugate operator:
\begin{equation*}
\Cal{L}_\eta(0) = \omega_{0, \eta} \, \Cal{L}(0) \,  \omega_{0, \eta}^{-1} : H^2(\R) \lra L^2(\R), 
\end{equation*}
with asymptotic operators $\Cal{L}_{\eta, +} = (\partial_x - \eta)^2$ and $\Cal{L}_{\eta, -} = \Cal{L}_-$. Since the multiplication $\omega_{0, \eta} \cdot : L^2_{0, \eta}(\R) \lra L^2(\R)$ is an isomorphism, the Fredholm indexes satisfy 
\begin{equation*}
\fred{\Cal{L}_\eta}(0) = 0 + \fred{\Cal{L}(0)} + 0 = \fred{\Cal{L}(0)}.
\end{equation*}
Then the conjugate operator is defined on a unweighted space, and its Fredholm borders are the two oriented curves $\sigma(\Cal{L}_{\eta,+}) = \lbrace -k^2 + i 2\eta k + \eta^2 : k \in \R\rbrace$ and $\sigma(\Cal{L}_{\eta,-}) = \lbrace -k^2 + i c_*(0) k + f'(1) : k \in \R\rbrace$, which are away from $0\in\C$; see Figure \ref{f: spectrum L eta}. This ensures that $\Cal{L}_\eta(0)$ is Fredholm, we now compute its index $\fred(\Cal{L}_\eta(0) - \lambda)$ at $\lambda = 0$. 
For $\lambda$ to the right of the essential spectrum, we use Palmer's theorem to compute the Fredholm index from the Morse indices (see e.g. \cite{KapitulaPromislow, FiedlerScheel}),
\begin{equation*}
\fred(\Cal{L}_\eta(0)-\lambda) = \dim E_-^\Rm{u}(\lambda) - \dim E_+^\Rm{u}(\lambda) = 2 - 2 = 0.
\end{equation*}
where $E_\pm^\Rm{u}$ are the unstable eigenspaces at $\pm\infty$. To prove that this spaces share the same dimension, one can take $\absolu{\lambda}$ large enough and use a standard normalization; see \cite[proof of Lemma 3.1]{FayeHolzer}. Then the index decreases to $-1$ when $\lambda$ crosses $\sigma(\Cal{L}_{\eta,+})$, since the latter curve has reverse orientation, see \cite{KapitulaPromislow}. Hence at $\lambda = 0$, we have shown that $\fred{\Cal{L}(0)} = \fred{\Cal{L}_\eta}(0) = -1$.

To compute the kernel, we note that 
\begin{equation}
\label{e: kernel L0 1}
u\in \ker{\Cal{L}(0)} \hspace{2em} \text{if and only if} \hspace{2em} u\in H^2_{0, \eta}(\R) \text{ and } \Cal{A}(0) \omega_*^{-1} u = 0,
\end{equation}
with $\Cal{A}(0) = \partial_x^2 + c_*(0)\partial_x + f'(q_0(x))$.
Studying the asymptotic growth of the ODE $\Cal{A}(0)u = 0$, one can construct a basis of solutions $\{ q_0', \phi \}$, with exponential behavior at $-\infty$: $\phi(x)\sim \exp((-\sqrt{f'(0)}-\alpha) x)$ and $q_0'(x) \sim \exp((-\sqrt{f'(0)} + \alpha) x)$ with $\alpha = \sqrt{f'(0) - f'(1)} > \sqrt{f'(0)} > 0$. See \cite[proof of Lemma 2.2]{FayeHolzer} for a similar construction.  Furthermore, the derivative of the front has weak exponential decay at $+\infty$: $q_0'(x) \sim x\omega_*(x)^{-1}$. Hence, neither $\phi$ nor $q_0'$ are sufficiently localized to satisfy the right hand condition in \eqref{e: kernel L0 1}, so that $\ker{\Cal{L}(0)} = \lbrace0 \rbrace$.

Finally, it is easily computed that $\tilde{\Cal{A}}(0) := \exp(\frac{c_*}{2}\cdot) \Cal{A}(0) \exp(-\frac{c_*}{2} \cdot)$ is self-adjoint, so that for $v\in H^2_{0,-\eta}(\R)$ and $u\in H^2_{0, \eta}(\R)$:
\begin{equation*}
\pscal{u, \Cal{L}^* v} = \pscal{\Cal{L} u, v} = \pscal{\Cal{A}(\omega_*^{-1} u), \omega_* v} = \pscal{\tilde{\Cal{A}} (e^{\frac{c_*}{2}\cdot} \omega_*^{-1} u), e^{-\frac{c_*}{2}}\omega_* v} = \pscal{e^{c_* \cdot} \omega_*^{-1} u, \Cal{A}(e^{-c_* \cdot} \omega_* v)},
\end{equation*}
which ensures that $v \in \ker(\Cal{L}(0)^*)$ if and only if $v\in H^2_{0, -\eta}(\R)$ and $\Cal{A}(0) e^{-c_* \cdot}\omega_* v = 0$.
For $x\to -\infty$, $\omega_*(x)^{-1} e^{c_* x} \phi(x) \sim \exp((\sqrt{f'(0)} - \alpha) x)$ is not bounded, hence $\ker(\Cal{L}(0)^*) = \Span(\omega_*^{-1} e^{c_*\cdot} q_0')$. 
\end{proof}

\begin{lemma} \label{l: L0 range}
For $\eta>0$ small enough, the range of $\Cal{L} (0) : H^2_{0, \eta}(\R) \to L^2_{0, \eta}(\R)$ is
\begin{equation*}
\label{e: range L0}
\range(\Cal{L}(0)) = \lbrace u \in L^2_{0,\eta}(\R) : \pscal{u, \varphi} = 0 \rbrace,
\end{equation*}
where $\varphi$ is defined in the above Lemma \ref{l: L0 fredholm properties}. We let $P : L^2_{0,\eta}(\R) \lra \range(\Cal{L}(0))$ denote the orthogonal projection onto $\range{\Cal{L}(0)}$ with respect to the $L^2_{0, \eta}(\R)$-inner product.
\end{lemma}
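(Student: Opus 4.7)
The plan is to apply the standard Fredholm duality characterization: the range of a Fredholm operator with closed range equals the annihilator of the kernel of the adjoint. By Lemma \ref{l: L0 fredholm properties}, $\Cal{L}(0) : H^2_{0,\eta}(\R) \to L^2_{0,\eta}(\R)$ is Fredholm of index $-1$ with trivial kernel, so $\range \Cal{L}(0)$ is closed of codimension one in $L^2_{0,\eta}(\R)$, and the cokernel---identified with the kernel of the formal adjoint acting on the dual space $L^2_{0,-\eta}(\R)$ under the unweighted $L^2$ pairing---is spanned by $\varphi(x) = \omega_*(x)^{-1} e^{c_* x} q_0'(x)$.

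The key steps are as follows. First, I would verify that $\varphi \in L^2_{0,-\eta}(\R)$ for $\eta > 0$ sufficiently small, so that the functional $u \mapsto \pscal{u, \varphi} := \int_\R u\varphi\, dx$ is well-defined and bounded on $L^2_{0,\eta}(\R)$ via Cauchy-Schwarz with the mutually dual weights $\omega_{0,\eta}\omega_{0,-\eta} \equiv 1$. At $-\infty$, combining the asymptotics $q_0'(x) \sim e^{(-\sqrt{f'(0)}+\alpha)x}$ from Lemma \ref{l: L0 fredholm properties} with the growth of $e^{c_* x}$ gives exponential decay of $\varphi$. At $+\infty$, the weak decay $q_0'(x) \sim x \omega_*(x)^{-1}$ combined with $c_*(0) = 2\eta_*(0)$ produces linear growth $\varphi(x) \sim x$, which is tamed by the weight $\omega_{0,-\eta}$ for any $\eta > 0$. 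Second, I would establish the inclusion $\range \Cal{L}(0) \subseteq \lbrace u \in L^2_{0,\eta}(\R) : \pscal{u,\varphi} = 0 \rbrace$: for $u \in H^2_{0,\eta}(\R)$, integration by parts (whose boundary terms vanish thanks to the decay of $u, u', u''$ at $+\infty$ and of $\varphi$ at $-\infty$) gives
\begin{equation*}
\pscal{\Cal{L}(0) u, \varphi} = \pscal{u, \Cal{L}(0)^* \varphi} = 0,
\end{equation*}
using $\Cal{L}(0)^* \varphi = 0$ from Lemma \ref{l: L0 fredholm properties}. Third, the right-hand side is a closed subspace of codimension one in $L^2_{0,\eta}(\R)$, since $\varphi \not\equiv 0$ makes the functional nontrivial and continuous; as $\range \Cal{L}(0)$ is also closed of codimension one and contained in this subspace, the two coincide. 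Finally, the orthogonal projection $P$ exists because $\range \Cal{L}(0)$ is a closed subspace of the Hilbert space $L^2_{0,\eta}(\R)$.

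The main technical obstacle is the asymptotic bookkeeping for $\varphi$ in the first step: one must confirm that the linear growth of $\varphi$ at $+\infty$ is genuinely dominated by the weight $\omega_{0,-\eta}$, and that the exponential decay at $-\infty$ is unaffected by the weight being identically one there, so that $\varphi$ indeed lies in the dual space $L^2_{0,-\eta}(\R)$ and the integration by parts in the second step is fully justified. Once this is settled, the remaining arguments reduce to standard Fredholm duality together with the Hilbert-space orthogonal projection onto a closed subspace.
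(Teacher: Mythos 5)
Your proof is correct and arrives at the same conclusion, but the reverse inclusion is handled by a cleaner codimension argument than the one in the paper. Both proofs establish the inclusion $\range\Cal{L}(0) \subseteq \{u : \pscal{u,\varphi}=0\}$ by pairing $u = \Cal{L}(0)\tilde u$ with $\varphi$ and using $\Cal{L}(0)^*\varphi = 0$ from Lemma \ref{l: L0 fredholm properties}. For the converse, the paper decomposes $u = Pu + (1-P)u$, writes $(1-P)u = \alpha(u)\psi$ using the one-dimensionality of the orthogonal complement, and then argues by contradiction that $\pscal{\psi,\varphi}\neq 0$, forcing $\alpha(u)=0$. You instead note that the Fredholm data (index $-1$, trivial kernel, closed range) make $\range\Cal{L}(0)$ a closed codimension-one subspace, that the annihilator $\{\pscal{\cdot,\varphi}=0\}$ is also closed of codimension one (once $\varphi\in L^2_{0,-\eta}$ is confirmed, which the asymptotics at $\pm\infty$ give), and that a closed codimension-one subspace contained in another must coincide with it. Your route avoids introducing $\psi$ and the contradiction argument, at the cost of checking $\varphi\in L^2_{0,-\eta}$ more explicitly; that membership is in fact already implicit in the paper's Lemma \ref{l: L0 fredholm properties}, where the adjoint is set up on $H^2_{0,-\eta}(\R)$. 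Both buy essentially the same thing, and your version is a touch more economical.
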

\begin{proof}
Assume that $u\in \range(\Cal{L}(0))$, so that $u=\Cal{L}(0) \tilde{u}$ with $\tilde{u}\in H^2_{0,\eta}(\R)$. Then $\pscal{u,\varphi} = \pscal{\tilde{u}, \Cal{L}(0)^* \varphi} = 0$. 
To prove the reverse inclusion, write $u\in L^2_{0,\eta}(\R)$ as 
\begin{equation*}
u = P u + (1-P)u.
\end{equation*}
From Lemma \ref{l: L0 fredholm properties}, $\Cal{L}(0)$ is Fredholm, hence its range is closed and $P$ is well defined. Furthermore, $\fred{\Cal{L}(0)} = -1$ and $\ker{\Cal{L}(0)} = \lbrace 0 \rbrace$, so that $1-P$ has a one dimensional range: 
\begin{equation*}
(1-P)u = \alpha(u) \psi,
\end{equation*}
with $\psi\in L^2_{0,\eta}(\R)$ fixed, and $\alpha : H^2_{0,\eta} (\R) \lra \R$ linear.
Assuming that $\pscal{u, \varphi} = 0$, we obtain
\begin{equation}
\label{eqn:LS-closed_range_1}
0 = \pscal{P u, \varphi} + \pscal{(1-P)u,\varphi} = \pscal{\tilde{u}, \Cal{L}(0)^* \varphi} + \alpha(u)\pscal{\psi, \varphi} = \alpha(u)\pscal{\psi,\varphi},
\end{equation}
for some $\tilde{u}\in H^2_\eta(\R)$. Hence either $\alpha(u) =0$ or $\pscal{\psi, \varphi} =0$. If $\pscal{\psi, \varphi} = 0$, then for all $v\in H^2_{0,\eta}(\R)$, we would have $\pscal{v, \varphi} = \pscal{\tilde{v}, \Cal{L}(0)^* \varphi} + \alpha(v) \pscal{\psi,\varphi} = 0$, which is to say that $\varphi = 0$ and is a contradiction. Hence from \eqref{eqn:LS-closed_range_1} we conclude $\alpha(u) = 0$, so that $u = P u \in \range(\Cal{L}(0))$.
\end{proof}

\section{Existence of the critical front -- proof of Theorem \ref{t: existence}} \label{s: existence}
Our approach is to capture the weak exponential decay at $+\infty$ implied by the pinched double root by solving \eqref{e: EFKPP fronts} with an ansatz 
\begin{align}
q (x; \delta) = \chi_- (x) + w(x) + \chi_+ (x) (\mu + x) e^{-\eta_* (\delta) x}, \label{e: existence ansatz}
\end{align}
where $\chi_+$ is a smooth positive cutoff function satisfying 
\begin{align}
\label{e: chi plus}
\chi_+ (x) = \begin{cases}
1, &x \geq 3, \\
0, &x \leq 2,
\end{cases}
\end{align}
and $\chi_- (x) = \chi_+ (-x)$. For brevity, we denote by $\psi(\mu, \delta)$ the function
\begin{align*}
\psi(x; \mu, \delta) = (\mu + x) e^{-\eta_* (\delta) x}. 
\end{align*}
We will require $w$ to be exponentially localized, with a decay rate faster than $e^{-\eta_*(\delta) x}$ --- this localized piece is the \textit{core} of the solution, while $\chi_-$ and $\chi_+ \psi$ capture the \textit{far-field} behavior. Similar far-field/core decompositions have been used to construct heteroclinic solutions to pattern-forming systems in \cite{AveryGoh, GohScheel}. Inserting the ansatz \eqref{e: existence ansatz} into the traveling wave equation \eqref{e: EFKPP fronts}, we get an equation
\begin{align}
\mathcal{A}_+(\delta) (\chi_- + w + \chi_+ \psi(\mu, \delta)) + N(\chi_- + w + \chi_+ \psi(\mu, \delta)) = 0, \label{e: existence ff core unweighted}
\end{align}
where $\mathcal{A}_+ (\delta) = -\delta^2 \partial_x^4 + \partial_x^2 + c_*(\delta) \partial_x + f'(0)$, and $N(q) = f(q) - f'(0) q$. Since we want to require $w$ to decay faster than the front itself, we first let $v = \omega_* w$, so that \eqref{e: existence ff core unweighted} becomes
\begin{align*}
0 = F(v; \mu, \delta) := \mathcal{S}(\delta) v + \omega_* \mathcal{A}_+(\delta) (\chi_- + \chi_+ \psi) + \omega_* N(\chi_- + \omega_*^{-1} v + \chi_+ \psi),
\end{align*}
where $\mathcal{S}(\delta) = \omega_* \mathcal{A}_+(\delta) \omega_*^{-1}$ is the conjugate operator
\begin{align}
\mathcal{S}(\delta) = -\delta^2 \partial_x^4 + \delta^2 a_3 (x; \delta) \partial_x^3 + (1 + \delta^2 a_2 (x; \delta)) \partial_x^2 + a_1 (x; \delta) \partial_x + \tilde{a}_0 (x; \delta), \label{e: S def}
\end{align}
where the coefficients $a_i$ are given in \eqref{e: coefficients a} for $i = 1,2$ or $3$ while
\begin{align*}
\tilde{a}_0 = f'(0) + \omega_* \left(c_* \partial_x + \partial_x^2 - \delta^2 \partial_x^4 \right) \omega_*^{-1}, 
\end{align*}
since we are linearizing about the unstable state $u \equiv 0$ rather than the front itself, which we are in the process of constructing. 

Since $\omega_*(x; \delta) = 1$ on the support of $\chi_-$ and $\omega_*(x; \delta) = e^{\eta_*(\delta) x}$ on the support of $\chi_+$, we simplify $F$ to 
\begin{align*}
F(v; \mu, \delta) = \mathcal{S}(\delta) v + \mathcal{A}_+(\delta) \chi_- + \mathcal{S}(\delta) [(\mu + \cdot) \chi_+] + \omega_* N (\chi_- + \omega_*^{-1} v + \chi_+ \psi). 
\end{align*}
Then, we extract from $N$ terms that are linear in $v$, together with residual terms that are $v$-independent. We write
\begin{align*}
\omega_* N(\chi_- + \omega_*^{-1}v + \chi_+ \psi) = \mathcal{N}(v; \mu, \delta) + Q(\mu, \delta) v + R(\mu, \delta) 
\end{align*}
where
\begin{align}
\mathcal{N}(v; \mu, \delta) &=  \omega_* \left[f(\chi_+ + \omega_*^{-1} v + \chi_+ \psi) - f(\chi_- + \chi_+ \psi) - f'(\chi_- + \chi_+ \psi) \omega_*^{-1} v \right], \label{e: existence ff core nonlinearity}
\end{align}
and 
\begin{align*}
Q(\mu, \delta)v &= (f'(\chi_- + \chi_+ \psi)- f'(0)) v, 
\hspace{4em}
R(\mu, \delta) = \omega_* [f(\chi_- + \chi_+ \psi) - f'(0) (\chi_- + \chi_+ \psi)].
\end{align*}
Altogether, $F$ decomposes as the sum of a linear term, a residual term, and a nonlinear term:
\begin{align}
F(v; \mu, \delta) = [\mathcal{S}(\delta) + Q (\mu, \delta)] v + \mathcal{R}(\mu, \delta) + \mathcal{N} (v; \mu, \delta), \label{e: existence ff core F}
\end{align}
where $\mathcal{N}(v; \mu, \delta)$ is given by \eqref{e: existence ff core nonlinearity}, and 
\begin{align*}
\mathcal{R}(\mu, \delta) = R(\mu, \delta) + \mathcal{A}_+(\delta) \chi_- + \mathcal{S}(\delta) [(\mu + \cdot) \chi_+].  
\end{align*}
At $\delta = 0$, the equation $F(v; \mu, 0) = 0$ is the traveling wave equation for the Fisher-KPP equation, and so we have a solution $F(v_0; \mu_0, 0) = 0$ where 
\begin{align*}
v_0 = \omega_*(\cdot; 0) q_0 - \chi_- - \chi_+ \omega_*(\cdot; 0) \psi(\mu_0, 0),
\end{align*}
and $q_0$ is the translate of the critical Fisher-KPP front for which 
\begin{align*}
q_0 (x) = (\mu_0 + x)e^{-\eta_*(0)x} + \mathrm{O}(x^2 e^{-2\eta_*(0)x}), \quad x \to\infty,
\end{align*}
so that $v_0$ is exponentially localized (see e.g. \cite{Gallay} for asymptotics of the critical Fisher-KPP front).

To regularize the singular perturbation and enforce exponential localization of $v$, we consider
\begin{align*}
G(v; \mu, \delta) = (1- \delta^2 \partial_x^2)^{-1} F(v; \mu, \delta),
\end{align*}
as a nonlinear function $G: H^2_{0, \eta} (\R) \times \R \times \R\to L^2_{0, \eta} (\R)$, for $\eta > 0$ sufficiently small. 

\begin{lemma}\label{l: existence G regularity}
Fix $\eta > 0$ sufficiently small. There exists $\delta_0 > 0$ such that $(v, \mu, \delta) \mapsto G(v, \mu, \delta): H^2_{0, \eta} (\R) \times \R \times (-\delta_0, \delta_0) \to L^2_{0, \eta} (\R)$ is well-defined, smooth in $v$, and continuous in $\mu$ and $\delta$. Moreover, $\partial_v G$ and $\partial_\mu G$ are continuous in $\delta$. 
\end{lemma}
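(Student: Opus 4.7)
The plan is to exploit the preconditioner identity
\begin{equation*}
(1-\delta^2\partial_x^2)^{-1}(-\delta^2\partial_x^4 + \partial_x^2) = \partial_x^2,
\end{equation*}
obtained by factoring $-\delta^2\partial_x^4 + \partial_x^2 = \partial_x^2(1-\delta^2\partial_x^2)$, to absorb the singular top-order part of $\mathcal{S}(\delta)$. All remaining contributions are then controlled using Lemmas \ref{l: preconditioner estimates} and \ref{l: T delta estimates}. I would treat the linear part $(1-\delta^2\partial_x^2)^{-1}\mathcal{S}(\delta)v$, the $\mu$-dependent terms $(1-\delta^2\partial_x^2)^{-1}[Q(\mu,\delta)v + \mathcal{R}(\mu,\delta)]$, and the nonlinearity $(1-\delta^2\partial_x^2)^{-1}\mathcal{N}(v;\mu,\delta)$ in turn. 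The main obstacle is the third-order term $\delta^2 a_3 \partial_x^3 v$ in $\mathcal{S}(\delta)v$, which a priori does not lie in $L^2$ for $v \in H^2$.

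Using the explicit form \eqref{e: S def}, write
\begin{equation*}
\mathcal{S}(\delta) v = (1-\delta^2\partial_x^2)\partial_x^2 v + \delta^2 a_3 \partial_x^3 v + \delta^2 a_2 \partial_x^2 v + a_1 \partial_x v + \tilde a_0 v.
\end{equation*}
Applying the preconditioner annihilates the first summand to leave $\partial_x^2 v$. For $a_1 \partial_x v + \tilde a_0 v \in H^1_{0,\eta}$, split $(1-\delta^2\partial_x^2)^{-1} = 1 + T(\delta)$: the identity piece depends continuously on $\delta$ since the coefficients $a_i(\cdot;\delta)$ depend continuously on $\delta$ through $\eta_*(\delta)$ (Lemma \ref{l: pinched double root}), and the $T(\delta)$ piece is continuous in $\delta$ with limit zero at $\delta=0$ by Lemma \ref{l: T delta estimates}. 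For $\delta^2 a_2 \partial_x^2 v$, the $O(1)$ bound of Lemma \ref{l: preconditioner estimates} together with the $\delta^2$ prefactor gives an $O(\delta^2)$ contribution in $L^2_{0,\eta}$. For the singular term $\delta^2 a_3 \partial_x^3 v$, I would integrate by parts to write $\delta^2 a_3 \partial_x^3 v = \delta^2 \partial_x(a_3 \partial_x^2 v) - \delta^2 a_3' \partial_x^2 v$, and then observe that the Fourier symbol $ik\delta^2/(1+\delta^2 k^2)$ of $(1-\delta^2\partial_x^2)^{-1}\delta^2 \partial_x$ is bounded by $|\delta|/2$ via the AM-GM estimate $1+\delta^2 k^2 \geq 2|\delta k|$, yielding an $O(|\delta|)$ contribution in $L^2_{0,\eta}$ (with the analogous argument in the exponentially weighted space after conjugation by $e^{\eta \cdot}$).

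For the residual $\mathcal{R}$ and multiplier $Q$: since $N(q)=f(q)-f'(0)q$ vanishes quadratically at $q=0$, $\omega_* N(\chi_- + \chi_+\psi(\mu,\delta))$ is exponentially localized as long as $\eta < \eta_*(\delta)$, so $\mathcal{R}(\mu,\delta) \in L^2_{0,\eta}$ depends continuously on $(\mu,\delta)$, while $(1-\delta^2\partial_x^2)^{-1}Q(\mu,\delta)v$ is handled exactly as the $\tilde a_0 v$ term above. For the nonlinearity $\mathcal{N}$ defined in \eqref{e: existence ff core nonlinearity}, smoothness from $H^2_{0,\eta}(\R)$ to itself follows from the Sobolev embedding $H^2(\R) \hookrightarrow L^\infty(\R)$, the Banach algebra property of $H^2$ in one dimension, and the quadratic vanishing of $\mathcal{N}$ in $\omega_*^{-1}v$, which renders $\omega_*(\omega_*^{-1}v)^k = \omega_*^{1-k} v^k$ compatible with the target weight whenever $\eta<\eta_*$. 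Lemmas \ref{l: preconditioner estimates} and \ref{l: T delta estimates} then give continuity of $(1-\delta^2\partial_x^2)^{-1}\mathcal{N}$ in $\delta$, and continuity in $\mu$ follows from smooth dependence of $\psi(\cdot;\mu,\delta)$ on $\mu$.

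The Fr\'echet derivatives $\partial_v G$ and $\partial_\mu G$ take the form $(1-\delta^2\partial_x^2)^{-1}$ applied to the linearization of $F$ in the corresponding variable, with $\mathcal{N}$ replaced by its partial derivative. These derivatives have the same structural features as $F$ itself, so their continuity in $\delta$ follows by repeating the arguments above. The key book-keeping point throughout is that the $O(\delta^{-1})$ loss in Lemma \ref{l: preconditioner estimates} is never triggered: we only use the $O(1)$ bound from $L^2_{0,\eta}$ to itself, the $L^2$-bounded symbol $ik\delta^2/(1+\delta^2 k^2)$ obtained after integration by parts on the third-order term, or the $H^1_{0,\eta}\to L^2_{0,\eta}$ continuity of $T(\delta)$.
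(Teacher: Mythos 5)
Your proof is correct in its conclusions and follows the same far-field/core structure as the paper's argument: the same decomposition of $(1-\delta^2\partial_x^2)^{-1}\mathcal{S}(\delta)$ into a leading $\partial_x^2$ plus lower-order preconditioned remainders, the same use of $T(\delta)$ via Lemma \ref{l: T delta estimates} for continuity in $\delta$, and the same Banach-algebra treatment of the nonlinearity. Where you genuinely deviate is in handling the singular third-order term $\delta^2 a_3\partial_x^3 v$. The paper estimates $\delta^2(a_3\partial_x^3 + a_2\partial_x^2)$ all at once, regarding it as an $O(1)$ map $H^2_{0,\eta}\to H^{-1}_{0,\eta}$ and then invoking the negative-index case $m=-1$ of the $\|(1-\delta^2\partial_x^2)^{-1}\|_{H^m_{0,\eta}\to H^{m+1}_{0,\eta}}\leq C/|\delta|$ bound in Lemma \ref{l: preconditioner estimates}; the $\delta^2$ prefactor then converts the $1/|\delta|$ loss into the $O(|\delta|)$ gain \eqref{e: O delta squared precondition}. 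You instead integrate by parts, $\delta^2 a_3\partial_x^3 v = \delta^2\partial_x(a_3\partial_x^2 v) - \delta^2 a_3'\partial_x^2 v$, and bound the symbol of $(1-\delta^2\partial_x^2)^{-1}\delta^2\partial_x$ by $|\delta|/2$ via AM--GM, handling the remainder with the $L^2\to L^2$ bound. Both routes yield the same $O(|\delta|)$ operator estimate; yours avoids negative Sobolev spaces and is somewhat more hands-on, at the mild cost of an extra integration by parts and the need for $a_3'$ to be bounded, which holds since $a_3$ is constant outside a compact set.

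One precision is worth adding to your argument for the residual. The assertion that $\omega_* N(\chi_-+\chi_+\psi)$ is exponentially localized fails at $-\infty$: there $\chi_-+\chi_+\psi\to 1$, $\omega_*\equiv 1$, and $N(1) = f(1)-f'(0) = -f'(0)\neq 0$, so $R(\mu,\delta)$ tends to a nonzero constant. Quadratic vanishing of $N$ at $q=0$ controls only the right tail. What actually places the full residual $\mathcal{R}(\mu,\delta)$ in $L^2_{0,\eta}$ is the cancellation between $R$ and $\mathcal{A}_+(\delta)\chi_-$: on $(-\infty,-3]$ one has $\mathcal{A}_+(\delta)\chi_- \equiv f'(0)$, which cancels $R\equiv -f'(0)$ identically, leaving $\mathcal{R}$ supported on a compact set plus an exponentially decaying right tail. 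This cancellation is built into the far-field ansatz (it is precisely why $\chi_-$ is included) and should be stated explicitly rather than attributed to localization of $R$ alone.
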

\begin{proof}
We use \eqref{e: existence ff core F} to write $G$ as
\begin{align}
G(v; \mu, \delta) = (1-\delta^2\partial_x^2)^{-1} \mathcal{S}(\delta) v + (1-\delta^2 \partial_x^2)^{-1} [Q(\mu, \delta) v + \mathcal{R}(\mu, \delta) + \mathcal{N}(v; \mu, \delta)]. \label{e: existence ff core well defined}
\end{align}
Using the fact that $f$ is smooth and that $H^2_{0, \eta} (\R)$ is a Banach algebra, one readily finds by Taylor expanding $f$ where it appears in $\mathcal{N}$ and $\mathcal{R}$ that if $v \in H^2_{0, \eta} (\R)$, then
\begin{align*}
||Q(\mu, \delta) v + R(\mu, \delta) + \mathcal{N}(v; \mu, \delta)||_{L^2_{0, \eta}} < \infty. 
\end{align*}
The remaining terms $A_+(\delta) \chi_-$ and $\mathcal{S}(\delta) [(\mu + \cdot) \chi_+]$ in $\mathcal{R}(\mu, \delta)$ are strongly localized by the choice of the far-field ansatz: $\chi_-(x)$ is identically zero for $x$ large, and for $x$ large every term in $\mathcal{S}(\delta)$ has at least two derivatives in it, so $\mathcal{S}(\delta) (\mu + \cdot) \equiv 0$ on the support of $\chi_+$, and the only terms that remain are compactly supported commutator terms. Hence we also obtain $\| \mathcal{R}(\mu, \delta) \|_{L^2_{0, \eta}} < \infty$. 

Together with $\eqref{e: precond L2 L2 bound}$ of Lemma \eqref{l: preconditioner estimates}, this implies that the second term of \eqref{e: existence ff core well defined} is in $L^2_{0, \eta} (\R)$, and so to check that $G$ is well-defined, it only remains to estimate the first term in \eqref{e: existence ff core well defined}. For this term, we use the specific form of $\mathcal{S}(\delta)$, given in \eqref{e: S def}, to write 
\begin{align}
(1-\delta^2 \partial_x^2)^{-1} \mathcal{S}(\delta) = \partial_x^2 + \delta^2 (1-\delta^2 \partial_x^2)^{-1} [a_3 \partial_x^3 + a_2 \partial_x^2] + (1-\delta^2 \partial_x^2)^{-1} (a_1 \partial_x + \tilde{a}_0). \label{e: existence ff core well defined S precond} 
\end{align}
Since $a_3$ and $a_2$ are smooth, constant outside of fixed compact set, and bounded uniformly in $\delta$, we have
\begin{align*}
||a_3 \partial_x^3 + a_2 \partial_x^2||_{H^2_{0, \eta} \to H^{-1}_{0, \eta}} \leq C. 
\end{align*}
Combining this with estimate \eqref{e: precond Hm Hmplus1 bound} of Lemma \ref{l: preconditioner estimates}, we obtain 
\begin{align}
||\delta^2 (1-\delta^2 \partial_x^2)^{-1} (a_3 \partial_x^3 + a_2 \partial_x^2)||_{H^2_{0, \eta} \to L^2_{0, \eta}} \leq C |\delta|. \label{e: O delta squared precondition}
\end{align}
The other terms in \eqref{e: existence ff core well defined S precond} are readily seen to be uniformly bounded in $\delta$ as operators from $H^2_{0, \eta} (\R)$ to $L^2_{0, \eta} (\R)$ for $\delta$ sufficiently small, from which we conclude that $G$ is well-defined. 

Since $f$ is smooth, smoothness in $v$ follows readily from the fact that $H^2_{0, \eta} (\R)$ is a Banach algebra whose norm controls the $L^\infty$ norm. Smoothness in $\mu$ is also readily attainable from smoothness of $f$ and the exponential localization of our ansatz. The preconditioner plays little role in these arguments --- when treating the residual terms or the nonlinearity, we do not need to use the preconditioner at all to obtain smoothness in $v$ and $\mu$. 

The residual terms as well as the nonlinearity are also readily seen to be continuous in $\delta$. The main subtlety is to handle the term $(1-\delta^2 \partial_x^2)^{-1} \mathcal{S}(\delta)$, which we write as 
\begin{align*}
(1-\delta^2)^{-1} \mathcal{S}(\delta) = (\partial_x^2 + a_1 \partial_x + \tilde{a}_0) + \delta^2(1- \delta^2 \partial_x^2)^{-1} (a_3 \partial_x^3 + a_2 \partial_x^2) + T(\delta) (a_1 \partial_x + \tilde{a}_0),
\end{align*}
where $T(\delta) = (1-\delta^2 \partial_x^2)^{-1} -1$. The operator $\partial_x^2 + a_1 (x, \delta) \partial_x + \tilde{a}_0 (x, \delta)$ is continuous in $\delta$ from $H^2_{0, \eta}$ to $L^2_{0, \eta}$, since the coefficients are smooth and uniformly bounded in $\delta$. The second term is continuous in $\delta$ by \eqref{e: O delta squared precondition}, and the last term is continuous in $\delta$ by Lemma \ref{l: T delta estimates}. Continuity in $\delta$ of $\partial_v G$ and $\partial_\mu G$ proceeds analogously. 

\end{proof}

With the appropriate regularity of $G$ in hand, we now aim to solve near $(v_0, \mu_0, 0)$ using the implicit function theorem. The linearization about this solution in $v$ is given by 
\begin{align*}
\partial_v G(v_0; \mu_0, 0) = \Cal{S}(0) + Q(\mu_0, 0) + \partial_v \mathcal{N} (v_0; \mu_0, 0) = \mathcal{S}(0) + f'(q_0) - f'(0) = \mathcal{L} (0). 
\end{align*}
From Lemma \ref{l: L0 fredholm properties}, $\partial_v G(v_0; \mu_0, 0)$ is Fredholm with index $-1$, so that the joint linearization $\partial_{(v, \mu)} G(v_0; \mu_0, 0)$ is Fredholm index 0 by the Fredholm bordering lemma \cite[Lemma 4.4]{ArndBjornRelativeMorse}. We show that in fact the joint linearization has full range, and hence is invertible.  
\begin{lemma}\label{l: existence linearization invertibility}
The joint linearization $\partial_{(v, \mu)} G(v_0; \mu_0, 0): H^2_{0, \eta} (\R) \times \R \to L^2_{0, \eta} (\R)$ is invertible. 
\end{lemma}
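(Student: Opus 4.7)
Since the joint linearization is Fredholm of index $0$ (as already noted via Fredholm bordering), it suffices to show that its kernel is trivial. Suppose $(w_1, \mu_1) \in H^2_{0, \eta}(\R) \times \R$ lies in $\ker \partial_{(v, \mu)} G(v_0; \mu_0, 0)$. Because $(1 - \delta^2 \partial_x^2)^{-1}|_{\delta = 0} = \mathrm{Id}$, we have $\partial_\mu G(v_0; \mu_0, 0) = \partial_\mu F(v_0; \mu_0, 0)$. Differentiating the definition of $F$ in $\mu$, using $\partial_\mu \psi = e^{-\eta_*(\delta) x}$ and the identity $\chi_- + \omega_*(\cdot; 0)^{-1} v_0 + \chi_+ \psi(\mu_0, 0) = q_0$, I would obtain
\[
\partial_\mu F(v_0; \mu_0, 0) = \omega_*(\cdot; 0)\, \mathcal{A}(0) \bigl[\chi_+\, e^{-\eta_*(0) \cdot}\bigr],
\]
where $\mathcal{A}(0) = \partial_x^2 + c_*(0) \partial_x + f'(q_0)$ is the Fisher--KPP linearization about $q_0$. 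Combined with the conjugation $\mathcal{L}(0) = \omega_*(\cdot; 0)\, \mathcal{A}(0)\, \omega_*(\cdot; 0)^{-1}$, the kernel equation reduces to $\mathcal{A}(0) u = 0$, where
\[
u := \omega_*(\cdot; 0)^{-1} w_1 + \mu_1 \chi_+\, e^{-\eta_*(0) \cdot}.
\]

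Next, I would analyze solutions of this ODE using the basis $\{q_0', \phi\}$ identified in the proof of Lemma~\ref{l: L0 fredholm properties}, where $\phi$ blows up exponentially at $-\infty$. Since $\chi_+ \equiv 0$ near $-\infty$ and $w_1 \in H^2_{0, \eta}(\R) \subset H^2(\R)$ decays at infinity by Sobolev embedding, $u$ is bounded at $-\infty$, forcing $u = \beta q_0'$ for some $\beta \in \R$.

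The conclusion then comes from matching asymptotics at $+\infty$. Differentiating the stated expansion $q_0(x) = (\mu_0 + x) e^{-\eta_*(0) x} + O(e^{-(\eta_*(0) + \eta) x})$ yields
\[
q_0'(x) = -\eta_*(0)\, x\, e^{-\eta_*(0) x} + (1 - \eta_*(0) \mu_0)\, e^{-\eta_*(0) x} + O(e^{-(\eta_*(0) + \eta) x}).
\]
Since $w_1 \in H^2_{0, \eta}(\R)$ implies $w_1(x) = o(e^{-\eta x})$ at $+\infty$ (by applying Sobolev embedding to $\omega_{0, \eta} w_1 \in H^2$), I obtain $\omega_*(\cdot; 0)^{-1} w_1 = o(e^{-(\eta_*(0) + \eta) x})$ and hence $u(x) = \mu_1 e^{-\eta_*(0) x} + o(e^{-(\eta_*(0) + \eta) x})$. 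The identity $u = \beta q_0'$ then forces $\beta = 0$ (from the coefficient of $x e^{-\eta_*(0) x}$) and subsequently $\mu_1 = 0$ (from the coefficient of $e^{-\eta_*(0) x}$). Therefore $u \equiv 0$ and $w_1 = 0$, establishing triviality of the kernel.

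The main obstacle is the asymptotic matching, which crucially exploits that $q_0'$ carries a genuinely linear-in-$x$ prefactor at $+\infty$ --- a direct manifestation of the pinched double root structure --- while the ansatz-compatible perturbation $\mu_1 \chi_+ e^{-\eta_*(0) \cdot}$ contributes no such term. Geometrically, this reflects the rigidity of the ansatz: translates of $q_0$ do not satisfy the prescribed $(\mu + \cdot) e^{-\eta_* \cdot}$ asymptotics for any $\mu \neq \mu_0$, so the translation symmetry of the traveling wave equation does not produce a kernel of the joint linearization, and the extra parameter $\mu$ is precisely what is needed to restore surjectivity onto $L^2_{0, \eta}(\R)$.
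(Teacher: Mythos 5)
Your proof is correct and takes a genuinely different route from the paper's. The paper establishes surjectivity: since $\range\mathcal{L}(0)$ has codimension one and is characterized as $\varphi^\perp$ by Lemma~\ref{l: L0 range}, it suffices to show $\pscal{\partial_\mu G(v_0;\mu_0,0),\varphi}\neq 0$, which is done by computing $\partial_\mu G(v_0;\mu_0,0)=\mathcal{L}(0)\chi_+$ and evaluating $\pscal{\mathcal{L}(0)\chi_+,\varphi}=\eta_*(0)$ by integration by parts, the nonzero value arising as a boundary term from the non-decay of $\chi_+$ and $\varphi$. You instead prove injectivity: you reduce a kernel element $(w_1,\mu_1)$ to a bounded solution $u$ of the second-order ODE $\mathcal{A}(0)u=0$, use boundedness at $-\infty$ to pin $u=\beta q_0'$, and then match the $x e^{-\eta_*(0)x}$ coefficient at $+\infty$ --- absent from the ansatz-compatible perturbation $\mu_1\chi_+e^{-\eta_*(0)\cdot}$ --- to force $\beta=0$, then $\mu_1=0$, then $w_1=0$. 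Both arguments exploit the same structural fact (the linear-in-$x$ prefactor produced by the pinched double root), but encode it differently: in the paper as a boundary term in an $L^2$ pairing, in your argument as an asymptotic obstruction. The paper's computation is shorter and needs nothing beyond the cokernel element $\varphi$, whereas yours additionally requires the basis $\{q_0',\phi\}$ of the ODE's solution space and, importantly, the \emph{differentiated} asymptotics of $q_0$ at $+\infty$. That last point is a small gap: the stated expansion $q_0(x)=(\mu_0+x)e^{-\eta_*(0)x}+\mathrm{O}(\cdot)$ cannot in general be differentiated term-by-term. Here it can --- and the cleanest way to see it within the paper's framework is to differentiate the far-field/core identity $q_0=\chi_-+\omega_*^{-1}v_0+\chi_+(\mu_0+\cdot)e^{-\eta_*(0)\cdot}$ directly, using $v_0,v_0'\in H^1_{0,\eta}$ to control the core contribution --- but you should state this step explicitly rather than implicitly differentiating an $\mathrm{O}$-estimate. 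Your closing remark about the rigidity of the ansatz and why translation symmetry fails to produce a kernel element is accurate and a nice complement to the paper's more computational argument.
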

\begin{proof}
To show that $\partial_{(v, \mu)} G(v_0; \mu_0, 0)$ is invertible, we show that $\partial_\mu G(v_0; \mu_0, 0)$ is linearly independent from the range of $\mathcal{L} (0)$. From Lemma \ref{l: L0 range}, it is enough to obtain $\pscal{\partial_\mu G(v_0; \mu_0, 0), \varphi} \neq 0$. After a short computation, one finds
\begin{align*}
\partial_\mu G(v_0; \mu_0, 0) = \Cal{S}(0) \chi_+ + (f'(q_0) - f'(0)) \chi_+ = \mathcal{L}(0) \chi_+. 
\end{align*}
We compute $\langle \mathcal{L}(0) \chi_+, \varphi \rangle$ via integration by parts, with the goal being to move $\mathcal{L}(0)$ onto the other side of the inner product as its adjoint and exploit the fact that $\mathcal{L}(0)^* \varphi = 0$. However, we must be careful since $\varphi$ and $\chi_+$ are not localized at $\infty$, and in fact there is one boundary term from integration by parts which does not vanish. We see this by writing
\begin{align*}
\int_\R \chi_+'' \varphi \, dx = - \int_\R \chi_+' \varphi' \, dx = \int_\R \chi_+ \varphi'' - \left[ \chi_+ \varphi' \right]^{\infty}_{-\infty} = \langle \chi_+, \varphi'' \rangle - \varphi'(\infty) = \langle \chi_+, \varphi'' \rangle +\eta_*(0),
\end{align*}
where we have observed from Lemma \ref{l: L0 fredholm properties} that $\varphi'(\infty) = -\eta_*(0)$. Recalling that $\Cal{L}(0) = \partial_x^2 + f'(q_*)$ for $x\geq 1$, we obtain 
\begin{equation*}
\langle \mathcal{L}(0) \chi_+, \varphi \rangle = \langle \chi_+, \mathcal{L}(0)^* \varphi \rangle + \eta_*(0) = \eta_*(0) = \frac{c_*(0)}{2} > 0, 
\end{equation*}
which concludes the proof.
\end{proof}

\begin{proof}[Proof of Theorem \ref{t: existence}]
Since $G(v_0; \mu_0, 0) = 0$, $G$ is smooth in $v$ and $\mu$ and continuous in $\delta$ near $(v_0; \mu_0, 0)$, $\partial_{(v, \mu)} G(v_0; \mu_0, 0)$ is invertible, and $\partial_{(v, \mu)} G (v; \mu, \delta)$ is continuous in $\delta$, the implicit function theorem implies that for $\delta$ small, there exist $v(\delta) \in H^2_{0, \eta} (\R)$ and $\mu(\delta) \in \R$ depending continuously on $\delta$ near $\delta = 0$ such that $G(v(\delta); \mu(\delta), \delta) = 0$. By construction of $G$, this implies that 
\begin{align*}
q_*(x; \delta) := \chi_- (x) + \omega_*(x; \delta)^{-1} v(x; \delta) + \chi_+ (x) (\mu(\delta) + x) e^{-\eta_*(\delta) x} 
\end{align*}
solves \eqref{e: EFKPP fronts}. The claim that $q_*(\cdot, \delta) \to q_*(\cdot; 0) = q_0$ uniformly in space follows from the form of this ansatz, together with the fact that $H^2_{0, \eta} (\R)$  is continuously embedded in $L^\infty (\R)$.
\end{proof}

\section{Small eigenvalues}
\label{s: small eigenvalues}
Having established existence of the critical front, we are now ready to study the point spectrum of the linearization about the front. Here we show that there is no eigenvalue in a neighborhood of the origin, and in particular no resonance embedded in the essential spectrum at the origin. For this, we follow \cite{PoganScheel}: apply a Lyapunov-Schmidt reduction to construct a scalar function which vanishes at the eigenvalues, in a similar manner to the Evans function.

Throughout this section, we set $\Omega(\delta) := \lbrace 0 \rbrace \cup (\C \backslash \sigma_\Rm{ess}(\Cal{L}_\delta))$, and restrict to $\lambda \in \Omega(\delta)$. Then $\lambda$ is off the negative real axis, so that the principal value of $\gamma:= \sqrt{\lambda}$ is defined by $\real{\gamma} \geq 0$. 

\begin{prop}\label{p: gap lemma}
There exists $\delta_0, \gamma_0 > 0$ and a function $E: (-\delta_0, \delta_0) \times B(0, \gamma_0) \lra \C$, continuous in $\delta$ and analytic in $\gamma$ such that for all $\gamma \in \Omega(\delta)$, the eigenvalue problem 
\begin{equation}
\label{e: eigenvalue problem}
(\Cal{L}(\delta) - \gamma^2)u = 0
\end{equation}
admits a bounded solution $u$ if and only if $E(\delta, \gamma) = 0$. Furthermore, $E(0, 0) \neq 0$.
In particular, there exists $\gamma_1, \delta_1 > 0$ such that for all $\delta \in (-\delta_1, \delta_1)$, $\Cal{L}(\delta)$ has no eigenvalues on $B(0, {\gamma_1}^2)\cap \Omega(\delta_1)$.
\end{prop}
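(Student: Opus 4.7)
My plan is to mimic the far-field/core construction from Section \ref{s: existence}, adapting the ansatz so that it detects bounded solutions of $(\Cal{L}(\delta) - \gamma^2) u = 0$. For $(\delta, \gamma)$ near $(0, 0)$, I first extract an analytic branch $\nu_-(\delta, \gamma)$ of roots of the characteristic polynomial of $\Cal{L}_+(\delta) - \gamma^2$ coming from the pinched double root: writing $\nu = \gamma r$ in \eqref{e: pinched double root} and applying the implicit function theorem in $r$ yields a single-valued $\nu_-(\delta, \gamma) = -\gamma/(1 - 12\delta^2 f'(0))^{1/4} + \Rm{O}(\gamma^2)$, analytic in $\gamma \in B(0, \gamma_0)$ and continuous in $\delta$, with $e^{\nu_-(\delta, \gamma) \cdot}$ bounded at $+\infty$ thanks to the convention $\real \gamma \geq 0$. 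The other slow mode $\nu_+$ grows at $+\infty$ and is excluded; the two remaining characteristic roots of $\Cal{L}_+(\delta) - \gamma^2$ are of size $\sim 1/\absolu{\delta}$, so the associated fast-decaying mode at $+\infty$, together with the two stable modes at $-\infty$, lie in $H^2_{0, \eta}(\R)$ for $\eta > 0$ small and fixed. I therefore parametrize candidate bounded solutions as $u(x) = w(x) + \beta \chi_+(x) e^{\nu_-(\delta, \gamma) x}$ with $w \in H^2_{0, \eta}(\R)$ and $\beta \in \C$, where $\chi_+$ is as in \eqref{e: chi plus}. Substituting into the eigenvalue problem and applying the preconditioner exactly as in Section \ref{s: existence} reduces the problem to the affine equation
\begin{equation*}
L(\delta, \gamma) w + \beta K(\delta, \gamma) = 0 \quad \text{in } L^2_{0, \eta}(\R),
\end{equation*}
where $L(\delta, \gamma) = (1-\delta^2\partial_x^2)^{-1}(\Cal{L}(\delta) - \gamma^2)$ and $K(\delta, \gamma) = (1-\delta^2\partial_x^2)^{-1}\bigl[(\Cal{L}(\delta) - \gamma^2)(\chi_+ e^{\nu_-(\delta, \gamma) \cdot})\bigr]$. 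By choice of $\nu_-$, the bracketed factor in $K$ is compactly supported modulo exponentially small corrections from $\Cal{L}(\delta) - \Cal{L}_+(\delta)$, so $K(\delta, \gamma) \in L^2_{0, \eta}(\R)$ is continuous in $\delta$ and analytic in $\gamma$, and $L(\delta, \gamma)$ enjoys the same regularity by Lemmas \ref{l: preconditioner estimates} and \ref{l: T delta estimates}, exactly as in Lemma \ref{l: existence G regularity}.

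Next I perform a Lyapunov-Schmidt reduction to extract $E$. By Lemma \ref{l: L0 fredholm properties}, $L(0, 0) = \Cal{L}(0)$ is Fredholm of index $-1$, injective, with cokernel $\Span(\varphi)$, and by Lemma \ref{l: L0 range} its range is $Y_0 := \lbrace u \in L^2_{0, \eta}(\R) : \pscal{u, \varphi} = 0 \rbrace$. Fix a one-dimensional complement $Y_1 \subset L^2_{0, \eta}(\R)$ spanned by some $\psi_0$ with $\pscal{\psi_0, \varphi} \neq 0$, and let $Q$ denote the (constant) projection onto $Y_1$ parallel to $Y_0$. The bordered operator $T(\delta, \gamma)(w, \beta) := L(\delta, \gamma) w + \beta K(\delta, \gamma)$ is Fredholm of index $0$, and $(I - Q) L(\delta, \gamma) : H^2_{0, \eta}(\R) \to Y_0$ reduces at $(0, 0)$ to the bijection $\Cal{L}(0) : H^2_{0, \eta}(\R) \to Y_0$, hence remains bijective on a neighborhood of $(0, 0)$ with inverse continuous in $\delta$ and analytic in $\gamma$. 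Solving the $Y_0$-component of $T = 0$ for $w$ and substituting into its $Y_1$-component produces the scalar reduced equation $\beta E(\delta, \gamma) = 0$ in $Y_1 \cong \C$, where
\begin{equation*}
E(\delta, \gamma) := Q K(\delta, \gamma) - Q L(\delta, \gamma) \bigl[(I - Q) L(\delta, \gamma)\bigr]^{-1} (I - Q) K(\delta, \gamma).
\end{equation*}
By construction $E(\delta, \gamma) = 0$ if and only if $T(\delta, \gamma)$ has nontrivial kernel, which is equivalent to the eigenvalue problem admitting a bounded solution. The function $E$ is continuous in $\delta$ and analytic in $\gamma$ by the corresponding regularity of $L$, $K$, and the partial inverse.

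It only remains to evaluate $E$ at the base point. Since $L(0, 0)$ maps into $Y_0$, one has $QL(0, 0) = 0$, so $E(0, 0) = QK(0, 0) = Q\Cal{L}(0) \chi_+$; identifying $Y_1 \cong \C$ via $\psi_0$, the scalar representative is $\pscal{\Cal{L}(0) \chi_+, \varphi} / \pscal{\psi_0, \varphi}$, whose numerator is precisely the boundary term $\eta_*(0) > 0$ computed in Lemma \ref{l: existence linearization invertibility}. Continuity of $E$ then yields $\delta_1, \gamma_1 > 0$ on which $E$ does not vanish, so $\Cal{L}(\delta)$ admits no eigenvalue nor bounded solution at $\gamma^2 \in B(0, \gamma_1^2) \cap \Omega(\delta_1)$ for $\delta \in (-\delta_1, \delta_1)$. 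The main technical obstacle I anticipate is the bookkeeping of continuity in $\delta$ and analyticity in $\gamma$ for the various preconditioned objects and for the partial inverse $[(I - Q) L(\delta, \gamma)]^{-1}$; both reduce to standard arguments given the simplicity of the pinched double root (Lemma \ref{l: pinched double root}\ref{h: 1-1}), the bijectivity of $\Cal{L}(0) : H^2_{0, \eta}(\R) \to Y_0$, and the preconditioner estimates of Section \ref{s: preconditioner estimates}.
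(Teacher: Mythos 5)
Your proposal is correct and mirrors the paper's far-field/core Lyapunov-Schmidt construction essentially step for step: the ansatz $u = w + \beta\chi_+e^{\nu_-\cdot}$ with $\nu_-$ extracted from the pinched double root, the preconditioner $(1-\delta^2\partial_x^2)^{-1}$, the projection onto $\range(\Cal{L}(0))$ and a one-dimensional complement, and the evaluation $E(0,0)=\eta_*(0)\neq 0$ from the boundary term computed in Lemma \ref{l: existence linearization invertibility} (your direct Schur-complement formula for $E$ is an equivalent, slightly more compact rendering of the paper's implicit-function-theorem plus linearity-in-$\beta$ argument, Corollary \ref{c: LS w linear in beta}). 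One point that warrants slightly more care is the ``only if'' direction: you must show that \emph{every} bounded solution of the eigenvalue problem has your ansatz form, which is the content of the paper's Lemma \ref{l: ansatz} and rests on an exponential-expansion argument at $\pm\infty$ (with attention at $\gamma=0$, where the two slow roots merge into a Jordan block), not merely the asymptotics of the characteristic roots.
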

For any fixed $\delta\neq 0$, notice that \eqref{e: eigenvalue problem} is a linear, non degenerate ODE with smooth coefficients, so that any solution $u$ is smooth. Furthermore, such a solution admits exponential expansions at $\pm \infty$ (see the proof of Lemma \ref{l: ansatz} hereafter), so that when $\gamma^2$ is to the right of the essential spectrum, $u$ is bounded if and only if it lies in $H^4(\R)$, which is to say it is an eigenfunction. We will therefore consider bounded solutions from this point forward: for $\gamma^2$ to the right of the essential spectrum, they correspond with eigenfunctions, while at $\gamma = 0$ they capture resonances of $\mathcal{L}(\delta)$. 

We first show that a bounded solution of \eqref{e: eigenvalue problem} decomposes into two parts: a uniformly localized part, and a slowly decaying part, whose rate is $\gamma$-close to $0$.
\begin{lemma}
\label{l: ansatz}
Near $(\delta, \gamma) = (0,0)$, the roots of the polynomial $\nu\mapsto d^+_{c_*}(\gamma^2, - \eta_* + \nu)$ satisfy:
\begin{equation*}
\nu_1 = -\frac{1}{\absolu{\delta}} + \Rm{O}(1),
\hspace{2em}
\nu_2 =  -\gamma + \Rm{O}(\delta \gamma + \gamma^2),
\hspace{2em}
\nu_3 =  \gamma + \Rm{O}(\delta \gamma + \gamma^2),
\hspace{2em}
\nu_4 = \frac{1}{\absolu{\delta}} + \Rm{O}(1),
\end{equation*}
where each $\Rm{O}$ is taken as $\delta$ and $\gamma$ goes to $0$.

In particular, there exists $\delta_0>0$, $\gamma_0 > 0$ and $\eta > 0$ such that for all $\delta\in(-\delta_0,\delta_0)$, and $\gamma \in B(0, \gamma_0)$ with $\gamma^2 \in \Omega(\delta)$, a bounded and smooth solution $u$ of \eqref{e: eigenvalue problem} decompose as 
\begin{equation}
\label{e: LS ansatz}
u(x) = w(x) + \beta \chi_+(x) e^{\nu_2 x},
\end{equation}
where $w\in H^2_{0, \eta}(\R)$ and $\beta\in\C$. In this decomposition, $\chi_+$ is the cutoff function \eqref{e: chi plus}.
\end{lemma}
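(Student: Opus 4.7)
Starting from the pinched double root expansion \eqref{e: pinched double root},
\begin{equation*}
d^+_{c_*}(\gamma^2, -\eta_* + \nu) = \nu^2\sqrt{1-12\delta^2 f'(0)} - \gamma^2 + \Rm{O}(\nu^3),
\end{equation*}
I rearrange as $\nu^2[\sqrt{1-12\delta^2 f'(0)} + \Rm{O}(\nu)] = \gamma^2$ and apply the implicit function theorem near $\nu = \gamma = 0$, which yields the two slow roots $\nu_{2,3} = \mp\gamma + \Rm{O}(\delta\gamma + \gamma^2)$. For the remaining two roots I rescale $\nu = \sigma/|\delta|$ and multiply the full quartic by $\delta^2$; the rescaled equation has leading-order factor $\sigma^2(1-\sigma^2)$, with simple nonzero roots $\sigma = \pm 1$, and the implicit function theorem applied near $(\sigma, \delta, \gamma) = (\pm 1, 0, 0)$ produces the fast roots $\nu_{1,4} = \mp 1/|\delta| + \Rm{O}(1)$.

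\textbf{Part 2: decomposition.} For $\delta \neq 0$, equation \eqref{e: eigenvalue problem} is a nondegenerate fourth-order linear ODE with smooth coefficients, which converge exponentially as $x \to +\infty$ to those of the constant-coefficient operator $\Cal{L}_+(\delta) - \gamma^2$; indeed, using the double-root equations defining $c_*$ and $\eta_*$, one checks that for $x \geq 1$ only the zeroth-order coefficient is non-constant and equals $f'(q_*(x;\delta)) - f'(0)$, which decays exponentially. A short expansion shows that the characteristic polynomial of this asymptotic operator is precisely $\nu \mapsto d^+_{c_*}(\gamma^2, -\eta_* + \nu)$, whose roots are the $\nu_j$ of Part 1. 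Standard asymptotic ODE theory (Levinson's theorem for asymptotically autonomous systems) then yields, for any smooth solution $u$ of \eqref{e: eigenvalue problem} and large $x$, an expansion $u(x) = \sum_{j=1}^4 \alpha_j e^{\nu_j x}(1 + \Rm{o}(1))$ with matching expansions for the derivatives. For $\delta_0, \gamma_0$ small enough, Part 1 guarantees the existence of $\eta > 0$ such that $\real\nu_1 < -\eta$ and $\real\nu_3, \real\nu_4 > 0$ throughout the parameter range; boundedness of $u$ forces $\alpha_3 = \alpha_4 = 0$, so setting $\beta := \alpha_2$, the function $w := u - \beta\chi_+ e^{\nu_2 \cdot}$ decays like $e^{\nu_1 x}$ with all derivatives at $+\infty$, placing $w \in H^2_{0,\eta}(\R_+)$. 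At $-\infty$ no extraction is needed: Lemma \ref{l: left stability} provides a spectral gap $f'(1) < 0$ for $\Cal{L}_-$, boundedness of $u$ forces exponential decay there, and since $\omega_{0,\eta} \equiv 1$ on a neighborhood of $-\infty$ this automatically gives $w \in H^2_{0,\eta}(\R_-)$.

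\textbf{Main obstacle.} The delicate point is the uniform choice of $\eta$ over $(\delta, \gamma) \in (-\delta_0, \delta_0) \times B(0, \gamma_0)$: one needs $|\real \nu_2|, |\real \nu_3| < \eta < |\real \nu_1|$ so that the slow modes lie outside while the fast mode lies inside the weighted space $H^2_{0,\eta}$. Part 1 makes this painless because $|\real \nu_1|$ grows like $1/|\delta|$ while $|\real \nu_{2,3}| = \Rm{O}(|\gamma|)$; it suffices to fix $\eta$ once and for all and then shrink $\delta_0$ and $\gamma_0$ accordingly. The degenerate case $\gamma = 0$, where $\nu_2 = \nu_3 = 0$ is a genuine double root of the characteristic polynomial and the asymptotic basis includes a linearly growing mode $x$, is handled by the same boundedness argument, which excludes the $x$-mode and reduces the extracted term $\beta\chi_+ e^{\nu_2 \cdot}$ to $\beta\chi_+$ in the natural way.
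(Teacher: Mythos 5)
Your approach matches the paper's: rescale to isolate fast and slow roots and apply the implicit function theorem, then use asymptotic ODE theory (what the paper calls the spatial-dynamics/exponential-expansions viewpoint, what you call Levinson's theorem) to pass from root asymptotics to the far-field/core decomposition. The handling of the fast roots, the uniform choice of $\eta$, the treatment of $-\infty$ via the spectral gap, and the Jordan-block degeneracy at $\gamma = 0$ all agree with the paper's proof.

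One step in Part 1 is stated too loosely. You write that you ``rearrange as $\nu^2[\sqrt{1-12\delta^2 f'(0)} + \Rm{O}(\nu)] = \gamma^2$ and apply the implicit function theorem near $\nu=\gamma=0$.'' As written, the IFT does not apply: the map $(\nu,\gamma)\mapsto \nu^2 G(\nu,\delta) - \gamma^2$ has vanishing $\nu$-derivative (and vanishing $\gamma$-derivative) at the origin, since $\nu=0$ is a double root. You must desingularize first, either by factoring $\nu^2 G(\nu,\delta) - \gamma^2 = \bigl(\nu\sqrt{G(\nu,\delta)} - \gamma\bigr)\bigl(\nu\sqrt{G(\nu,\delta)} + \gamma\bigr)$ (legitimate since $G(0,\delta)>0$) and applying the IFT to each linear factor, or by rescaling $\nu = \gamma\sigma$ as the paper does, which yields a reduced polynomial $g_2(\delta,\gamma,\sigma)$ with simple roots at $\sigma = \pm 1$ when $(\delta,\gamma)=(0,0)$. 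Either route delivers $\nu_{2,3} = \mp\gamma + \Rm{O}(\delta\gamma + \gamma^2)$, but without one of them the IFT invocation is a genuine gap.
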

\begin{proof}
The claimed expansions of the four roots is purely technical and is postponed to the end of the proof.
Rewrite \eqref{e: eigenvalue problem} as a first order ODE in $\R^4$:
\begin{equation*}
\partial_x U = M(x; \delta, \gamma).
\end{equation*}
where $U = \transpose{(u, u', u'', u^{(3)})}$. The matrix $M$ converges towards $M_\pm(\delta, \gamma)$ when $x\to \pm\infty$, with an exponential rate which is independent of $\delta$ and $\gamma$. The eigenvalues of this asymptotic matrices $M_\pm$ are the roots of the dispersion relations $d_{c_*}^\pm(\gamma^2, -\eta_* + \cdot)$. It is standard that with such a convergence rate, these eigenvalues determine the behavior of $U$ at $\pm\infty$; see for example \cite[proof of Lemma 2.2]{FayeHolzer}.

More precisely, the behavior at $+\infty$ is the following. For $\gamma \neq 0$, the four roots are distinct, so that the exponential behavior is ensured: $U(x) \sim \sum_{i=1}^4 c_i(U) e^{\nu_i x}$ when $x\to +\infty$, with $c_i(U, \delta, \gamma)$ are vectors that does not depend on $x$. As $\gamma^2\notin \sigma_\Rm{ess}(\Cal{L}(\delta))$, the two small roots satisfy $\real{\nu_2(\delta, \gamma)} < 0 < \real{\nu_3(\delta, \gamma)}$, so that a bounded $U$ has exactly the claimed form.
At $\gamma = 0$, the two small roots merge to form a Jordan block. The proof in the above reference adapts, and we have the following expansion: $U(x) \sim c_1(U) e^{\nu_1 x} + c_2(U) + c_3(U) x + c_4(U) e^{\nu_4 x}$ when $x\to +\infty$. Once again the claimed decomposition is satisfied. 

At $-\infty$, the four roots of $d_{c_*}^-(\gamma^2, -\eta_* + \cdot)$ are distinct, and bounded away from $0$ with spectral gap uniform in $(\delta, \gamma)$. Then the expansion $U(x) \sim \sum_{i=1}^4 c_i^-(U) e^{\nu_i^- x}$ holds at $x\to -\infty$, so that any bounded $U$ lies in $H^2(\R_-)$. Hence the claimed decomposition holds. For an alternative argument not relying on the dynamical systems view of exponential expansions, see Remark \ref{rmk: evals are zeros of E}. 

We now establish the expansions of the roots by applying the implicit function theorem to $d_{c_*}^+$. 
From the choice of $\eta_*$ (see also \eqref{e: operator L+}) we have
\begin{equation*}
g_0(\delta, \gamma, \nu) := d_{c_*}^+(\gamma^2, -\eta_* + \nu) = - \delta^2 \nu^4 + 4\eta_* \delta^2 \nu^3 + (1-6\delta^2 \eta_*^2) \nu^2 - \gamma^2.
\end{equation*}
To avoid any $\delta$ singularity, we get rid of the $\delta^2$ in the dominant term by changing variables $\mu := \nu \absolu{\delta}$:
\begin{equation*}
g_1(\delta, \gamma, \mu) := \delta^2 g_0\big(\gamma, \delta, \frac{\mu}{\absolu{\delta}}\big) = - \mu^4 + 4 \eta_* \absolu{\delta} \mu^3 + (1 - 6 \delta^2 \eta_*^2)\mu^2 - \gamma^2 \delta^2.
\end{equation*}
At $(\delta, \gamma) = (0,0)$, this reduces to $g_1(0,0,\mu) = -\mu^2(\mu-1)(\mu+1)$. Applying the implicit function theorem to the simple root $-1$, we construct a root $\mu_1(\delta, \gamma)$ for $g_1(\delta, \gamma, \cdot)$ whose derivatives can be computed iteratively by differentiating the relation $g_1(\delta, \gamma, \mu_1(\delta, \gamma)) = 0$.
One can show by induction that any pure derivative in $\gamma$ is null: $\partial_\gamma^k \mu_1(0,0) = 0$ for $k\in \N^*$. This ensures that the Taylor expansion has the form
\begin{equation*}
\mu_1(\delta, \gamma) = -1 - \delta \frac{\partial_\delta g_1(0,0, -1)}{\partial_\mu g_1(0,0, -1)} - \gamma \frac{\partial_\gamma g_1(0,0, -1)}{\partial_\mu g_1(0,0, -1)} + \Rm{O}(\delta^2 + \delta\gamma) = -1 + \Rm{O}(\delta).
\end{equation*}
Coming back to the original variable, we define $\nu_1(\delta, \gamma) = \mu_1(\delta, \gamma) / \absolu{\delta}$, which satisfies the claimed expansion. The same steps can be applied to define $\mu_4(\delta, \gamma) = 1 + \Rm{O}(\delta)$, which in turn leads to $\nu_4(\delta, \gamma)$ as claimed.

To unfold the double root at $\mu = 0$, we change variables once again to $\nu = \gamma\sigma$:
\begin{equation*}
g_2(\delta, \gamma, \sigma) = \frac{g_0(\delta, \gamma, \gamma \sigma)}{\gamma^2} = - \delta^2 \gamma^2 \sigma^4 + 4\eta_* \delta^2 \gamma \sigma^3 + (1-6\delta^2 \eta_*^2) \sigma^2 - 1.
\end{equation*}
At $(\delta, \gamma) = (0,0)$, this reduces to $g_2(0, 0, \sigma) = (\sigma - 1)(\sigma + 1)$. Applying the implicit function theorem once again gives rise to 
\begin{equation*}
\sigma_2(\delta, \gamma) = -1 + \Rm{O}(\delta+\gamma),
\hspace{4em}
\sigma_3(\delta, \gamma) = 1 + \Rm{O}(\delta+\gamma),
\end{equation*}
which in turns leads to the claimed estimates on $\nu_2(\delta, \gamma) = \gamma \sigma_2(\delta, \gamma)$ and $\nu_3(\delta, \gamma) = \gamma \sigma_3(\delta, \gamma)$.
\end{proof}

As in the existence of the critical front, our problem is singular at $\delta = 0$. Hence, we apply the same preconditioner: when $\delta$ is small, \eqref{e: eigenvalue problem} is equivalent to 
\begin{equation*}
\label{e: eigenvalue problem reg}
(1 - \delta^2 \partial_x^2)^{-1} \, (\Cal{L}(\delta) - \gamma^2)u = 0.
\end{equation*}
We now use the decomposition of Lemma \ref{l: ansatz} to separate out the localized part of our problem from the far-field behavior, which will allow us to make use of the Fredholm properties on weighted spaces of Section \ref{s: Fred Prop L0}. In the following, for $\delta\in (-\delta_0, \delta_0)$ and $\gamma \in B(0,\gamma_0)$ we let 
\begin{equation*}
A(\delta, \gamma, \eta) = \lbrace w + \beta \chi_+ e^{\nu_2(\delta, \gamma)\cdot} : w \in H^2_{0,\eta}(\R), \beta \in\R \rbrace,
\end{equation*}
denote the set where the ansatz obtained above holds.

\begin{lemma}
\label{l: LS localization}
There exist positive constants $\delta_0$, $\gamma_0$ and $\eta$ such that if  $\delta\in(-\delta_0, \delta_0)$, $\gamma \in B(0, \gamma_0)$ and $u\in A(\delta, \gamma, \eta)$, then $(1 - \delta^2 \partial_x^2)^{-1} \, (\Cal{L}(\delta) - \gamma^2) u \in L^2_{0,\eta}(\R)$.
\end{lemma}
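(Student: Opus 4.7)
The plan is to split the action of $(1-\delta^2\partial_x^2)^{-1}(\Cal{L}(\delta) - \gamma^2)$ on the ansatz $u = w + \beta \chi_+ e^{\nu_2 x}$ by linearity and handle the two pieces separately, reusing the preconditioner analysis of Section \ref{s: existence} for the core piece and exploiting the pinched double root construction of $\nu_2$ for the far-field piece.

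For the core piece $w \in H^2_{0,\eta}(\R)$, the argument is essentially a repeat of the proof of Lemma \ref{l: existence G regularity}. Using the algebraic identity $-\delta^2\partial_x^4 + \partial_x^2 = (1-\delta^2\partial_x^2)\partial_x^2$ together with the explicit form \eqref{e: operator L} of $\Cal{L}(\delta)$, I would rewrite
\begin{equation*}
(1-\delta^2\partial_x^2)^{-1}\Cal{L}(\delta) = \partial_x^2 + \delta^2(1-\delta^2\partial_x^2)^{-1}[a_3\partial_x^3 + a_2\partial_x^2] + (1-\delta^2\partial_x^2)^{-1}[a_1\partial_x + a_0],
\end{equation*}
and invoke Lemma \ref{l: preconditioner estimates} to see that each term, and also $\gamma^2 (1-\delta^2\partial_x^2)^{-1}$, maps $H^2_{0,\eta}(\R)$ boundedly into $L^2_{0,\eta}(\R)$ uniformly in $\delta \in (-\delta_0, \delta_0)$.

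For the far-field piece $\chi_+ e^{\nu_2 x}$, I would compute $(\Cal{L}(\delta) - \gamma^2)(\chi_+ e^{\nu_2 x})$ directly, check that it already lies in $L^2_{0,\eta}(\R)$, and then apply the preconditioner via \eqref{e: precond L2 L2 bound}. The Leibniz rule decomposes this expression into a principal term $\chi_+(\Cal{L}(\delta) - \gamma^2) e^{\nu_2 x}$ plus commutators in which at least one derivative lands on $\chi_+$; the latter are smooth and supported in $[2,3]$, hence trivially in $L^2_{0,\eta}(\R)$. For the principal term, the key observation is that on $\lbrace x \geq 1 \rbrace$, the identities $\varpi^{(k)}(x)/\varpi(x) = (-\eta_*)^k$ combined with $c_* = 2\eta_* - 4\delta^2\eta_*^3$ and $d^+_{c_*}(0, -\eta_*) = 0$ reduce the coefficients \eqref{e: coefficients a} to $a_1 \equiv 0$ and $a_0(x;\delta) = f'(q_*(x;\delta)) - f'(0)$, so that $\Cal{L}(\delta) = \Cal{L}_+(\delta) + (f'(q_*(x;\delta)) - f'(0))$ there. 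Since $\nu_2$ is by construction a root of $g_0(\delta, \gamma, \cdot)$, we have $\Cal{L}_+(\delta) e^{\nu_2 x} = \gamma^2 e^{\nu_2 x}$ exactly, so for $x \geq 3$,
\begin{equation*}
(\Cal{L}(\delta) - \gamma^2) e^{\nu_2 x} = (f'(q_*(x;\delta)) - f'(0))\, e^{\nu_2 x} = \mathrm{O}\bigl(x\, e^{(\nu_2 - \eta_*)x}\bigr)
\end{equation*}
as $x \to +\infty$, using the asymptotics of $q_*$ from Theorem \ref{t: existence}. Multiplied by the weight $e^{\eta x}$, this is square-integrable at $+\infty$ provided $\eta + \real{\nu_2} < \eta_*$, which holds uniformly in $\gamma \in B(0, \gamma_0)$ once $\eta$ and $\gamma_0$ are taken small enough relative to $\eta_*$, since $\real{\nu_2} = \mathrm{O}(\gamma)$ by Lemma \ref{l: ansatz}.

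The key insight, and the main potential obstacle, is precisely the exact cancellation $(\Cal{L}_+(\delta) - \gamma^2) e^{\nu_2 x} = 0$ at $+\infty$: the $\gamma^2$ term combined with the leading constant-coefficient part of $\Cal{L}(\delta)$ annihilates the slowly decaying far-field ansatz, leaving behind only the exponentially localized correction proportional to $f'(q_*) - f'(0)$. Without this cancellation, the contribution $\gamma^2 \chi_+ e^{\nu_2 x}$ alone would fall outside $L^2_{0,\eta}(\R)$ for $\eta > 0$ whenever $\real{\nu_2}$ is close to zero.
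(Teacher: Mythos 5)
Your proposal is correct and follows essentially the same route as the paper's own proof: split $u = w + \beta\chi_+ e^{\nu_2 x}$, control the core piece via the preconditioner decomposition of Lemma \ref{l: existence G regularity}, and for the far-field piece exploit that $\nu_2$ annihilates the constant-coefficient symbol of $\Cal{L}_+(\delta) - \gamma^2$ on $\{x\geq 1\}$, leaving only $(f'(q_*) - f'(0))e^{\nu_2 x}$, which lies in $L^2_{0,\eta}$ once $\eta + \real \nu_2 < \eta_*$. Your explicit verification that the coefficients reduce to $a_1 \equiv 0$ and $a_0 = f'(q_*) - f'(0)$ on $\{x\geq 1\}$ is a slightly more detailed unpacking of the paper's identity $\Cal{L}(\delta) - \Cal{L}_+(\delta) = f'(q_*) - f'(0)$ there, but is the same observation.
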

\begin{proof}
First, $(1-\delta^2 \partial_x^2)^{-1}(\Cal{L}(\delta) - \gamma^2) w$ belongs to $L^2_{0,\eta}(\R)$ by the choice of the preconditioner, using the same regularization effect we observed in \eqref{e: existence ff core well defined S precond}. Then, as $\chi_+$ is smooth, vanishes on $({-}\infty, 2)$ and is constant on $(3, +\infty)$, it only remains to show that $(\Cal{L}(\delta) - \gamma^2) e^{\nu_2 \cdot} \in L^2_{0,\eta}(\R_+)$.
For $x\geq 1$, almost all coefficients of $\Cal{L}$ are constants, see \eqref{e: coefficients a}, hence we compute
\begin{equation*}
(\Cal{L}(\delta) -\gamma^2) e^{\nu_2 x} = (\Cal{L}(\delta) - \Cal{L}_+(\delta)) e^{\nu_2 x} + (\Cal{L}_+(\delta) -\gamma^2) e^{\nu_2 x} = \left(f'(q_*(x; \delta)) - f'(0)\right) e^{\nu_2 x} + P(\nu_2,\delta, \gamma) e^{\nu_2 x},
\end{equation*}
where the polynomial $P(X, \delta, \gamma)$ is the symbol defined by: $\Cal{L}_+(\delta) - \gamma^2 = P(\partial_x, \delta, \gamma)$, and $\Cal{L}_+$ is the asymptotic operator \eqref{e: operator L+}. From the definition of $\nu_2(\delta, \gamma)$, $P(X, \delta, \gamma)$ vanishes at $X=\nu_2(\delta, \gamma)$, hence for $x\geq 1$:
\begin{equation*}
(\Cal{L}(\delta) - \gamma^2)e^{\nu_2 x} = (f'(q_*(x)) - f'(0))e^{\nu_2 x} = f''(0) q_*(x; \delta) e^{\nu_2 x} + \Rm{O}\left(e^{\nu_2 x}{q_*(x; \delta)}^2\right).
\end{equation*}
The right hand side belongs to $L^2_{0,\eta}(\R)$ as long as $\eta$ satisfies
\begin{equation}
\label{e: LS condition negative growth}
-\eta_* + \real{\nu_2(\delta, \gamma)} < -\eta.
\end{equation}
We can take a smaller $\gamma_0$ than in Lemma \ref{l: ansatz}, so that $\sup_{\delta, \gamma} \lbrace -\eta_*(\delta) + \real{\nu_2(\delta, \gamma)}\rbrace < 0$, which then allows to fix $\eta> 0$ so that \eqref{e: LS condition negative growth} is satisfied for all $\delta\in(-\delta_0, \delta_0)$ and $\gamma\in B(0, \gamma_0)$. This concludes the proof.
\end{proof}
We can now use Lemma \ref{l: L0 range} to decompose our problem into a part which belongs to $\range{\Cal{L}(0)}$ and a complementary part. Recall that $P : L^2_{0,\eta}(\R) \lra \range(\Cal{L}(0))$ and that $\varphi$ allows to describe $\range(\Cal{L}(0))$.
Fix $\delta\in(-\delta_0, \delta_0)$, and $\gamma \in B(0, \gamma_0)\cap \Omega(\delta)$. If $u$ is a bounded solution of \eqref{e: eigenvalue problem} then $(w, \beta) \in H^2_{0, \eta}(\R)\times \C$ defined in Lemma \ref{l: ansatz} solves:
\begin{equation}
\left\lbrace
\begin{array}{l}
P \,(1 - \delta^2\partial_x^2)^{-1}\,(\Cal{L}(\delta) - \gamma^2)(w + \beta h) = 0, \\[0.5em]
\pscal{(1 - \delta^2\partial_x^2)^{-1}\,(\Cal{L}(\delta) - \gamma^2)(w + \beta h), \varphi} = 0,
\end{array}
\right. \label{e: LS preconditioned}
\end{equation}
where $h(x) = \chi_+(x) e^{\nu_2(\gamma)x}$. Reciprocally, if $(w, \beta)\in H^2_{0, \eta}(\R)\times \C$ satisfies \eqref{e: LS preconditioned}, then $u = w + \beta h$ is bounded and satisfies \eqref{e: eigenvalue problem}. We write the first equation as 
\begin{align}
0 = \mathcal{F}(w, \beta; \gamma, \delta) := P \,(1 - \delta^2\partial_x^2)^{-1}\,(\Cal{L}(\delta) - \gamma^2)(w + \beta h). \label{e: LS precond w equation}
\end{align}
and solve it with the implicit function theorem. We will then use the second equation to define $E(\delta, \gamma)$.

\begin{lemma}\label{l: LS w regularity}
For $\eta >0$ sufficiently small, the map $\mathcal{F}: H^2_{0, \eta} (\R) \times \C \times B(0, \gamma_0) \times (-\delta_0, \delta_0) \to L^2_{0, \eta} (\R)$ is smooth in $w$ and $\beta$, analytic in $\gamma$, and continuous in $\delta$. Moreover, $\partial_w \mathcal{F}(w, \beta; \gamma, \delta)$ is continuous in $\beta$, $\gamma$, and $\delta$. 
\end{lemma}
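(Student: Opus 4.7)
The plan is to split $\mathcal{F}$ according to how the arguments $w$ and $\beta$ appear, and to treat each piece by combining the preconditioner technology already developed in Section \ref{s: preconditioner estimates} and Lemma \ref{l: existence G regularity} with the localized structure coming from Lemma \ref{l: LS localization}. Write
\begin{equation*}
\mathcal{F}(w,\beta;\gamma,\delta) = P\,(1-\delta^2\partial_x^2)^{-1}(\mathcal{L}(\delta) - \gamma^2)\, w \;+\; \beta\, P\,(1-\delta^2\partial_x^2)^{-1}(\mathcal{L}(\delta) - \gamma^2)\, h(\gamma,\delta),
\end{equation*}
where $h(x;\gamma,\delta) = \chi_+(x)\, e^{\nu_2(\delta,\gamma)\, x}$. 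Since $\mathcal{F}$ is affine in $(w,\beta)$, smoothness in these variables is immediate once we verify that the map takes values in $L^2_{0,\eta}$. Moreover $\partial_w \mathcal{F} = P(1-\delta^2\partial_x^2)^{-1}(\mathcal{L}(\delta) - \gamma^2)$ depends on neither $w$ nor $\beta$, so its continuity in $\beta$ is automatic.

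For the first summand, $\gamma$ enters only through the bounded multiplier $\gamma^2$, so analyticity in $\gamma$ is trivial. Continuity in $\delta$ follows by a verbatim adaptation of the argument in Lemma \ref{l: existence G regularity}. Namely, using the coefficients of \eqref{e: coefficients a}, decompose
\begin{equation*}
(1-\delta^2\partial_x^2)^{-1}\mathcal{L}(\delta) = \bigl(\partial_x^2 + a_1\partial_x + a_0\bigr) + \delta^2(1-\delta^2\partial_x^2)^{-1}\bigl(a_3\partial_x^3 + a_2\partial_x^2\bigr) + T(\delta)\bigl(a_1\partial_x + a_0\bigr).
\end{equation*}
The first bracket is continuous in $\delta$ because the $a_i(\cdot;\delta)$ inherit continuity from that of $q_*$ established in Theorem \ref{t: existence}; the second is $\mathrm{O}(|\delta|)$ from $H^2_{0,\eta}$ to $L^2_{0,\eta}$ by Lemma \ref{l: preconditioner estimates}; and the third is continuous at $\delta=0$ by Lemma \ref{l: T delta estimates} and at $\delta \neq 0$ by standard resolvent continuity. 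Composing with the bounded projection $P$ preserves all these properties.

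For the second summand, the new point is that $h$ depends on $\gamma$ through $\nu_2$. The implicit function theorem applied to the polynomial $g_2$ in the proof of Lemma \ref{l: ansatz} shows that $\nu_2(\delta,\gamma)$ is analytic in $\gamma$ (and continuous in $\delta$). By Lemma \ref{l: LS localization}, $(\mathcal{L}(\delta) - \gamma^2)\, h(\gamma,\delta)$ lies in $L^2_{0,\eta}(\R)$: for $x \geq 1$ it reduces to $\bigl(f'(q_*(x;\delta)) - f'(0)\bigr) e^{\nu_2(\delta,\gamma) x}$ which is exponentially controlled uniformly in $(\delta,\gamma)$ by \eqref{e: LS condition negative growth}, while the remaining contributions from commutators of $\mathcal{L}(\delta)$ with $\chi_+$ are compactly supported smooth functions depending analytically on $\gamma$. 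Pointwise in $x$ the integrand is analytic in $\gamma$, and the uniform exponential majorant permits dominated convergence to upgrade pointwise analyticity to analyticity as a map into $L^2_{0,\eta}$. Continuity in $\delta$ of the same map follows by combining the continuous $\delta$-dependence of $q_*$, of $\nu_2$, and of the preconditioner via the splitting with $T(\delta)$ used above. The claim for $\partial_w \mathcal{F}$ is then just the $\gamma,\delta$-continuity of its formula, which has already been established.

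The main technical obstacle is the analyticity in $\gamma$ of the $h$-dependent piece, since it is precisely this property which, in Proposition \ref{p: gap lemma}, will yield a holomorphic Evans-type function $E(\delta,\gamma)$. All other properties reduce to mild bookkeeping on top of the preconditioner estimates of Lemmas \ref{l: preconditioner estimates}--\ref{l: T delta estimates} and the ansatz structure of Lemma \ref{l: LS localization}.
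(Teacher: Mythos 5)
Your proposal follows essentially the same structure as the paper's proof: exploit linearity in $(w,\beta)$ to reduce regularity to well-definedness, then split $(1-\delta^2\partial_x^2)^{-1}\mathcal{L}(\delta)$ into a $\delta$-independent leading part plus error terms handled by Lemmas \ref{l: preconditioner estimates} and \ref{l: T delta estimates}, and treat the $h$-dependent piece separately. Where you diverge is analyticity in $\gamma$: the paper cites \cite[Proposition 5.11]{PoganScheel} while you sketch a direct dominated-convergence argument from pointwise analyticity of $\gamma \mapsto \nu_2(\delta,\gamma)$ together with a uniform exponential majorant from \eqref{e: LS condition negative growth}. That is a reasonable, more self-contained route, though to make it airtight you should phrase it as weak analyticity plus local boundedness implying strong $L^2_{0,\eta}$-valued analyticity (Morera plus Fubini to get scalar analyticity of $\gamma \mapsto \langle (\mathcal{L}(\delta)-\gamma^2)h,\,g\rangle$ for each $g$). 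One minor imprecision: when you route $(\mathcal{L}(\delta)-\gamma^2)h$ through $T(\delta)$ to get $\delta$-continuity, Lemma \ref{l: T delta estimates} only gives continuity of $T(\delta)$ as an operator from $H^1_{0,\eta}$ to $L^2_{0,\eta}$, so you need $\delta \mapsto (\mathcal{L}(\delta)-\gamma^2)h$ to be continuous into $H^1_{0,\eta}$, not merely $L^2_{0,\eta}$. The paper flags this explicitly; it holds because $h$ is smooth and the commutator and localization terms carry one extra derivative, but your write-up should state it rather than leave it implicit in ``combining the continuous $\delta$-dependence of $q_*$, of $\nu_2$, and of the preconditioner.''
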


\begin{proof}
Note that $\mathcal{F}$ is linear in $w$ and $\beta$, so smoothness is automatic provided the linear part in $w$ is well defined, which is guaranteed here by
Lemma \ref{l: LS localization}. 
For the continuity of $\partial_w \mathcal{F}(w, \beta; \gamma, \delta)$, we write
\begin{align*}
(1-\delta^2 \partial_x^2)^{-1} \mathcal{L}(\delta) = \partial_x^2 + a_1 \partial_x + a_0 + \delta^2 (1- \delta^2 \partial_x^2)^{-1} (a_3 \partial_x^3 + a_2 \partial_x^2) + T(\delta) (a_1 \partial_x + a_0).
\end{align*}
We see by Lemmas \ref{l: preconditioner estimates} and \ref{l: T delta estimates} that $(1-\delta^2 \partial_x^2)^{-1}\mathcal{L}(\delta)$ is a a well-defined family of bounded operators from $H^2_{0, \eta}$ to $L^2_{0, \eta}$, depending continuously on $\delta$. This is of course preserved when we compose with the projection $P$. We write the other term in the linearization in $w$ as 
\begin{align*}
\gamma^2 (1-\delta^2 \partial_x^2)^{-1} w = \gamma^2 w +  \gamma^2 T(\delta) w, 
\end{align*}
which is again continuous in $\gamma$ and $\delta$ as a bounded linear operator from $H^2_{0, \eta}$ to $L^2_{0, \eta}$ by Lemma \ref{l: T delta estimates}. Hence $\partial_w \mathcal{F}$ is continuous in its three last variables.
Analyticity of $\Cal{F}$ in $\gamma$ follows as in \cite[Proposition 5.11]{PoganScheel}. For the continuity of $\Cal{F}$ with respect to $\delta$, it only remains to look at the terms associated to $h$. We rewrite 
\begin{align*}
(\mathcal{L}(\delta) - \gamma^2) h = [\mathcal{L}_+ (\delta), \chi_+] e^{\nu_2 (\delta, \gamma) \cdot} + (\mathcal{L}(\delta) - \mathcal{L}_+ (\delta)) e^{\nu_2(\delta, \gamma) \cdot},
\end{align*}
using the fact that $(\mathcal{L}_+(\delta)-\gamma^2)e^{\nu_2(\delta, \gamma) \cdot} = 0$, and where $[\mathcal{L}_+ (\delta), \chi_+] = \mathcal{L}_+(\delta) (\chi_+ \cdot) - \chi_+ \mathcal{L}_+(\delta)$ is the commutator between these operators. In this form, we recognize that $[\mathcal{L}_+ (\delta), \chi_+]$ and $(\mathcal{L}(\delta) - \mathcal{L}_+(\delta))$ are both differential operators with exponentially localized coefficients, with rate uniform in $\delta$ for $\delta$ small. By Lemma \ref{l: ansatz}, $e^{\nu_2 (\delta, \gamma) x}$ is continuous in $\gamma$ and $\delta$ for each fixed $x$, and the uniform localization of $[\mathcal{L}_+(\delta), \chi_+]$ and $(\mathcal{L}(\delta) - \mathcal{L}_+(\delta))$ guarantees that these terms are continuous in $\delta$ in $L^2_{0, \eta}$ for $\eta$ small. In fact, since $h$ is a smooth function, we see that $\delta \mapsto (\mathcal{L}(\delta) - \gamma^2) h$ is in particular continuous from $(-\delta_1, \delta_1)$ to $H^1_{0, \eta}$. Taking into account the preconditioner, we write
\begin{align*}
(1-\delta^2 \partial_x^2)^{-1} (\mathcal{L}(\delta) - \gamma^2)h = (\mathcal{L}(\delta) - \gamma^2) h + T(\delta) (\mathcal{L}(\delta) - \gamma^2) h. 
\end{align*}
By Lemma \ref{l: T delta estimates}, this term is continuous in $\delta$, as desired. 
\end{proof}

\begin{corollary} \label{c: LS w linear in beta}
For $\gamma, \delta$ sufficiently small, and for $\beta \in \C$, there is a family of solutions $w$ to \eqref{e: LS precond w equation} which have the form
\begin{equation}
w(\beta; \gamma, \delta) = \beta \tilde{w} (\gamma, \delta). \label{e: LS w linear in beta}
\end{equation}
Moreover, any solution to \eqref{e: LS precond w equation} with $\gamma, \delta$ small has this form. 
\end{corollary}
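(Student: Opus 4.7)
The strategy is to exploit the fact that $\mathcal{F}$ is jointly linear in $(w, \beta)$, which lets us sidestep a genuine implicit function theorem argument and instead invoke linear algebra directly. Writing
\[
\mathcal{F}(w, \beta; \gamma, \delta) = A(\gamma, \delta)\,w + \beta\,b(\gamma, \delta),
\]
where $A(\gamma, \delta) = P(1 - \delta^2 \partial_x^2)^{-1}(\mathcal{L}(\delta) - \gamma^2)$ and $b(\gamma, \delta) = P(1 - \delta^2 \partial_x^2)^{-1}(\mathcal{L}(\delta) - \gamma^2) h$, the equation $\mathcal{F}(w, \beta; \gamma, \delta) = 0$ becomes $A(\gamma, \delta) w = -\beta\,b(\gamma, \delta)$. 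The plan is to show that $A(\gamma, \delta)$, viewed as a map from $H^2_{0, \eta}(\R)$ onto $\range(\mathcal{L}(0)) \subset L^2_{0, \eta}(\R)$, is invertible for all sufficiently small $(\gamma, \delta)$; once this is established, setting $\tilde{w}(\gamma, \delta) := -A(\gamma, \delta)^{-1} b(\gamma, \delta)$ delivers both the family $w = \beta \tilde{w}(\gamma, \delta)$ of solutions and their uniqueness in a single step.

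First I would verify invertibility of $A(0, 0)$. At $(\gamma, \delta) = (0, 0)$, since $P$ is the orthogonal projection onto $\range(\mathcal{L}(0))$, we have $P \mathcal{L}(0) w = \mathcal{L}(0) w$ for every $w \in H^2_{0, \eta}(\R)$, so $A(0, 0) = \mathcal{L}(0)$. Lemma \ref{l: L0 fredholm properties} shows $\mathcal{L}(0)$ has trivial kernel on $H^2_{0, \eta}(\R)$, while Lemma \ref{l: L0 range} identifies its range with $\range(\mathcal{L}(0))$, which is closed by the Fredholm property. Thus $A(0, 0) : H^2_{0, \eta}(\R) \longrightarrow \range(\mathcal{L}(0))$ is a bounded bijection between Banach spaces, so the bounded inverse theorem supplies a bounded inverse.

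Next I would extend this invertibility to small $(\gamma, \delta)$. By Lemma \ref{l: LS w regularity}, the derivative $\partial_w \mathcal{F}(w, \beta; \gamma, \delta) = A(\gamma, \delta)$ is continuous in $(\gamma, \delta)$ in the operator norm on $\mathcal{B}(H^2_{0, \eta}, L^2_{0, \eta})$; since the image of $A(\gamma, \delta)$ lies in $\range(\mathcal{L}(0))$, this continuity is inherited for $A(\gamma, \delta)$ as a map into the closed subspace $\range(\mathcal{L}(0))$. The set of invertible bounded linear operators between two fixed Banach spaces is open in the norm topology, so after possibly shrinking $\gamma_0$ and $\delta_0$, $A(\gamma, \delta) : H^2_{0, \eta}(\R) \to \range(\mathcal{L}(0))$ remains a homeomorphism for all $\gamma \in B(0, \gamma_0)$, $\delta \in (-\delta_0, \delta_0)$.

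With invertibility in hand, the corollary is immediate: $\mathcal{F}(w, \beta; \gamma, \delta) = 0$ is equivalent to $A(\gamma, \delta) w = -\beta b(\gamma, \delta)$, whose unique solution is $w = \beta \tilde{w}(\gamma, \delta)$ with $\tilde{w}(\gamma, \delta) = -A(\gamma, \delta)^{-1} b(\gamma, \delta)$. The only subtlety, and what could be called the main obstacle, is bookkeeping about the codomain: $\mathcal{F}$ is advertised as taking values in $L^2_{0, \eta}(\R)$, but because of the projection $P$ its image is actually contained in $\range(\mathcal{L}(0))$, and we must take this smaller target in order to upgrade the Fredholm statement of Lemma \ref{l: L0 fredholm properties} (index $-1$) into genuine invertibility. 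Once this identification is made, the rest is a direct continuity-of-invertibility argument.
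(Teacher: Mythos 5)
Your proof is correct, and it is a cleaner streamlining of what the paper does. The paper proceeds by (a) invoking the implicit function theorem near the trivial solution $(0,0;0,0)$ (where the key hypothesis — invertibility of $\partial_w\mathcal{F}(0,0;0,0)=P\mathcal{L}(0)$ viewed with codomain $\range(\mathcal{L}(0))$ — is exactly the invertibility you check), (b) deducing the linear form $w=\beta\tilde{w}$ from the local uniqueness granted by the IFT plus linearity of the equation, and then (c) handling the ``moreover'' clause with a separate rescaling argument: any solution $(w_0,\beta_0)$ is divided by a large constant $K(\|w_0\|_{H^2_{0,\eta}},\beta_0)$ to bring it into the IFT neighborhood, where it must coincide with $\beta\tilde{w}/K$. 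Your reformulation $\mathcal{F}(w,\beta;\gamma,\delta)=A(\gamma,\delta)w+\beta\,b(\gamma,\delta)$ exploits the full linearity of the problem to sidestep both the IFT and the rescaling: once $A(\gamma,\delta):H^2_{0,\eta}\to\range(\mathcal{L}(0))$ is shown invertible — via invertibility at $(0,0)$ (same ingredients: Lemmas \ref{l: L0 fredholm properties} and \ref{l: L0 range} plus the open mapping theorem) and openness of the set of invertible operators combined with the continuity supplied by Lemma \ref{l: LS w regularity} — the unique solution $w=-\beta A(\gamma,\delta)^{-1}b(\gamma,\delta)$ comes in one step, globally in $(w,\beta)$. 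The ingredients are identical, but your route delivers the ``moreover'' statement for free rather than via a separate scaling trick, and you correctly make explicit the codomain restriction to $\range(\mathcal{L}(0))$, which the paper leaves implicit when it calls $P\mathcal{L}(0)$ invertible.
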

\begin{proof}
We begin with the trivial solution $\mathcal{F}(0, 0; 0, 0) = 0$. The linearization in $w$ about this trivial solution is $\partial_w \mathcal{F}(0, 0; 0, 0) = P \mathcal{L}(0)$, which is invertible by Lemmas \ref{l: L0 fredholm properties} and \ref{l: L0 range}. Together with Lemma \ref{l: LS w regularity}, this implies that we can solve near this trivial solution with the implicit function theorem, obtaining a unique solution $w(\beta; \gamma, \delta)$ in a neighborhood $\mathcal{U}$ of $(0, 0; 0, 0)$. Since \eqref{e: LS precond w equation} is linear in $w$ and $\beta$, by uniqueness any solution in this neighborhood can be written as 
\begin{equation*}
w(\beta; \gamma, \delta) = \beta \tilde{w} (\gamma, \delta) 
\end{equation*}
for some function $\tilde{w}(\gamma, \delta) \in H^2_{0, \eta} (\R)$. 

If for some fixed $\gamma$, $\delta$ small we have another solution $(w_0, \beta_0)$ to \eqref{e: LS precond w equation} which does not a priori have this form, by dividing by a sufficiently large constant $K(||w_0||_{H^2_{0, \eta}}, \beta)$ we get another solution which belongs to the neighborhood $\mathcal{U}$ where we have solved with the implicit function theorem, and so we conclude that 
\begin{equation*}
\frac{w_0}{K(||w_0||_{H^2_{0, \eta}}, \beta)} = \frac{\beta}{K(||w_0||_{H^2_{0, \eta}}, \beta)} \tilde{w}(\gamma, \delta), 
\end{equation*}
and hence the solution $(w_0, \beta_0)$ in fact has the form \eqref{e: LS w linear in beta}, as claimed. 
\end{proof}

Having solved the first equation in \eqref{e: LS preconditioned} with the implicit function theorem, we now insert this solution $w(\beta; \gamma, \delta) = \beta \tilde{w}(\gamma, \delta)$ into the second equation, so that \eqref{e: LS preconditioned} has a solution if and only if 
\begin{align}
0 = E(\delta, \gamma) := \langle (1-\delta^2 \partial_x^2)^{-1} (\mathcal{L} (\delta) - \gamma^2) (\tilde{w}(\gamma, \delta) + h), \varphi \rangle. \label{e: E definition}
\end{align}
Note that we have been able to eliminate the $\beta$ dependence in this equation, since all terms in this equation are linear in $\beta$ by Corollary \ref{c: LS w linear in beta}. Since the projection $P$ played no role in the proof of Lemma \ref{l: LS w regularity}, the same argument shows that $E$ is continuous in both of its arguments. 

\begin{lemma}
The function $E: (-\delta_0, \delta_0) \times B(0, \gamma_0) \to \C$ is continuous in both arguments, and analytic in $\gamma$ for fixed $\delta$. 
\end{lemma}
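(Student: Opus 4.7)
The plan is to read off both regularity properties of $E$ by transporting the regularity properties of $\mathcal{F}$ already established in Lemma~\ref{l: LS w regularity} through the implicit function theorem, and then combining with the observation that the remaining pieces in \eqref{e: E definition} (the preconditioner, the ansatz function $h$, and the pairing against the fixed element $\varphi$) are continuous in $\delta$ and analytic in $\gamma$.

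First, I would establish the regularity of the reduced solution $\tilde{w}(\gamma,\delta) \in H^{2}_{0,\eta}(\mathbb{R})$. By Corollary~\ref{c: LS w linear in beta}, $\tilde{w}(\gamma,\delta)$ is obtained by applying the implicit function theorem to $\mathcal{F}(w,\beta;\gamma,\delta)=0$ at $(w,\beta;\gamma,\delta)=(0,0;0,0)$, where $\partial_w \mathcal{F}(0,0;0,0) = P\mathcal{L}(0)$ is invertible from $H^2_{0,\eta}$ to $\range(\mathcal{L}(0))$. Lemma~\ref{l: LS w regularity} tells us that $\mathcal{F}$ is continuous in $\delta$, analytic in $\gamma$, and that $\partial_w \mathcal{F}$ is continuous in $(\beta,\gamma,\delta)$. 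The continuous (resp.\ analytic) version of the implicit function theorem therefore yields that $\tilde{w}(\gamma,\delta)$ is continuous from $B(0,\gamma_0)\times(-\delta_0,\delta_0)$ into $H^{2}_{0,\eta}(\mathbb{R})$, and that for each fixed $\delta$, the map $\gamma\mapsto \tilde{w}(\gamma,\delta)$ is analytic in a neighborhood of $0$.

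Next I would insert $\tilde{w}(\gamma,\delta)$ into \eqref{e: E definition} and argue piece by piece. Write
\begin{equation*}
E(\delta,\gamma) = \langle (1-\delta^2\partial_x^2)^{-1}(\mathcal{L}(\delta)-\gamma^2)\tilde{w}(\gamma,\delta),\varphi\rangle
+ \langle (1-\delta^2\partial_x^2)^{-1}(\mathcal{L}(\delta)-\gamma^2)h,\varphi\rangle.
\end{equation*}
For the first term, the operator $(1-\delta^2\partial_x^2)^{-1}(\mathcal{L}(\delta)-\gamma^2)$ was shown in the proof of Lemma~\ref{l: LS w regularity} to be a family of bounded operators $H^{2}_{0,\eta}\to L^{2}_{0,\eta}$ depending continuously on $\delta$ (via Lemmas~\ref{l: preconditioner estimates} and~\ref{l: T delta estimates}) and polynomially (hence analytically) on $\gamma$. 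Composed with the bounded linear functional $\langle\cdot,\varphi\rangle$ (valid since $\varphi\in L^{2}_{0,-\eta}$, the dual of $L^{2}_{0,\eta}$) and the regular map $(\gamma,\delta)\mapsto\tilde{w}(\gamma,\delta)$, this gives a function that is jointly continuous in $(\delta,\gamma)$ and analytic in $\gamma$ for each fixed $\delta$. For the second term, the discussion at the end of the proof of Lemma~\ref{l: LS w regularity} shows that $(1-\delta^2\partial_x^2)^{-1}(\mathcal{L}(\delta)-\gamma^2)h$ depends continuously on $\delta$ in $L^{2}_{0,\eta}(\mathbb{R})$ and analytically on $\gamma$, the latter because $\nu_2(\delta,\gamma)$ is analytic in $\gamma$ by Lemma~\ref{l: ansatz} together with the analytic implicit function theorem applied to the polynomial dispersion relation. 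Pairing with $\varphi$ preserves both properties.

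The only real subtlety is making sure the analytic implicit function theorem genuinely applies here, which requires $\mathcal{F}$ to be analytic in $\gamma$ as a map into $L^{2}_{0,\eta}$; this was claimed in Lemma~\ref{l: LS w regularity} by analogy with \cite[Proposition~5.11]{PoganScheel}, and it in turn relies on the analytic dependence of $\nu_2(\delta,\gamma)$ and of the polynomial multiplier $\gamma^2$. Once these are in place, combining the two paragraphs above yields that $E$ is continuous on $(-\delta_0,\delta_0)\times B(0,\gamma_0)$ and analytic in $\gamma$ for fixed $\delta$, completing the proof.
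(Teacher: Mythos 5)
Your proposal is correct and follows the same strategy the paper uses — the paper's justification is the single remark immediately before the lemma, that ``the projection $P$ played no role in the proof of Lemma \ref{l: LS w regularity}, the same argument shows that $E$ is continuous in both of its arguments,'' together with the analyticity already claimed there. You do a useful service by making explicit the one step the paper leaves implicit: the regularity of $(\gamma,\delta)\mapsto \tilde{w}(\gamma,\delta)$ itself, which must be propagated through the implicit function theorem (continuous and analytic versions) before the pairing-against-$\varphi$ argument can be applied to the composite expression in \eqref{e: E definition}. That is indeed the right thing to do, and your decomposition of $E$ into the $\tilde{w}$-term and the $h$-term mirrors the structure of the proof of Lemma \ref{l: LS w regularity} exactly; no new ideas are needed beyond what the paper already supplies.
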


\begin{proof}[Proof of Proposition \ref{p: gap lemma}]
It only remains to prove that $E(0, 0) \neq 0$. From \eqref{e: E definition}, we see that 
\begin{align*}
E(0, 0) = \langle \mathcal{L}(0) (\tilde{w} (0, 0) + \chi_+), \varphi \rangle. 
\end{align*}
Since $\tilde{w}(0, 0) \in H^2_{0, \eta} (\R)$, we have
\begin{align*}
\langle \mathcal{L}(0) \tilde{w}(0, 0), \varphi \rangle = \langle \tilde{w}(0, 0), \mathcal{L}(0)^* \varphi \rangle = 0. 
\end{align*}
Hence we obtain 
\begin{align*}
E(0, 0) = \langle \mathcal{L} (0) \chi_+, \varphi \rangle = \eta_*(0) \neq 0,
\end{align*}
by the computation in the proof of Lemma \ref{l: existence linearization invertibility}. 
\end{proof}

\begin{rmk}\label{rmk: evals are zeros of E} Rather than using the spatial dynamics approach to exponential expansions outlined in the proof of Lemma \ref{l: ansatz} to show directly that eigenfunctions have the form \eqref{e: LS ansatz}, one can instead show that for $\delta \neq 0$, $(\mcl(\delta) - \gamma^2): H^4 (\R) \to L^2(\R)$ is invertible if $E(\delta, \gamma) \neq 0$ and $\gamma^2$ is to the right of the essential spectrum of $\mcl(\delta)$, using an argument adapted from \cite{PoganScheel}. Indeed, if $\gamma^2$ is to the right of the essential spectrum, then $\mcl(\delta) - \gamma^2$ is Fredholm index 0, and in particular has closed range, so to invert this operator on $L^2(\R)$, it suffices to solve $(\mcl(\delta) - \gamma^2) u = g$ for $g$ in the dense subspace $L^2_{0, \eta} (\R)$. The fact that the range of $\mcl(\delta) - \gamma^2$ is closed then implies $\mcl(\delta) - \gamma^2$ is surjective, and hence invertible since it is Fredholm of index 0. The open mapping theorem then implies the inverse is bounded, so $\gamma^2$ will be in the resolvent set of $\mcl(\delta)$. To solve $(\mcl(\delta) - \gamma^2) u = g$ for $g \in L^2_{0, \eta} (\R)$, one looks for solutions in the form \eqref{e: LS ansatz}, and finds that $(w,\beta)$ solve the system \eqref{e: LS preconditioned} but with $(0,0)^T$ on the right hand side replaced by $(Pg, \langle g, \phi \rangle)^T$. We can always solve the first equation with the implicit function theorem, and we can solve the second equation precisely when $E(\delta, \gamma) \neq 0$, as claimed. At $\gamma = 0$ we lose Fredholm properties on $L^2(\R)$, but the fact that $E(\delta, 0) \neq 0$ implies there is no solution to $\mcl(\delta) u = 0$ of the form $u = w + \beta \chi_+$ for $w$ exponentially localized, and this is actually all that is needed in \cite{AveryScheel} to prove nonlinear stability. One could additionally use a modified far-field/core decomposition at $\gamma = 0$ to prove that all bounded solutions to $\mcl(\delta) u = 0$ have the form $u = w+ \beta \chi_+$.  

\end{rmk}

\section{Large and intermediate eigenvalues --- proof of Theorem \ref{t: spectral stability}} \label{s: large eigenvalues}

Here, we conclude the study of the point spectrum. We first exclude any large unstable point spectrum, using mostly that the operator is sectorial.

\begin{figure}
\hspace{\stretch{1}}
\includegraphics[scale=\figurescale]{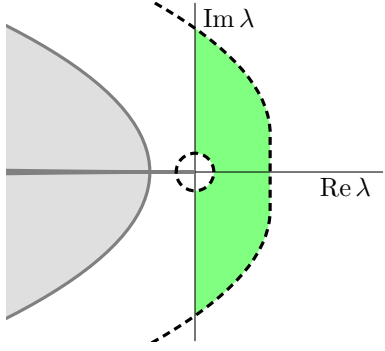}
\hspace{\stretch{1}}
\caption{Three regions for the study of the point spectrum. The function $E(\delta, \gamma)$ from Section \ref{s: small eigenvalues} rules out point spectrum in the dashed ball centered at the origin, together with a potential eigenvalue at the origin. Proposition \ref{p: large eigenvalues} excludes point spectrum to the right of the dashed curve. Finally, the green region to the right contains no point spectrum provided $\delta$ is small enough, see Proposition \ref{p: moderate eigenvalues}.}
\label{f: point spectrum}
\end{figure}
\begin{prop}
\label{p: large eigenvalues}
There exists a compact set $K\subset \C$ such that for all $\delta$ small, any eigenvalue $\lambda$ of $\Cal{L}(\delta)$ with $\real{\lambda} \geq 0$ lies in $K$. More precisely, an eigenvalue $\lambda$ satisfies:
\begin{equation*}
\real{\lambda} \leq \norme{b(\cdot; \delta)}_\infty, 
\hspace{4em}
\absolu{\imag{\lambda}} \leq c_* \sqrt{\norme{b(\cdot; \delta)}_\infty - \real{\lambda}},
\end{equation*}
where $b(\cdot\,;\delta) = f'(q_*(\cdot\, ; \delta))$ is uniformly bounded.
\end{prop}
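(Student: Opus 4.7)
\textbf{Proof plan for Proposition \ref{p: large eigenvalues}.}

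The strategy is a straightforward energy estimate, conjugated back to the unweighted operator $\Cal{A}(\delta)$. Suppose $\Cal{L}(\delta) u = \lambda u$ with $u \in H^4(\R)$, $\real \lambda \geq 0$. Since $\Cal{L}(\delta) = \omega_* \Cal{A}(\delta) \omega_*^{-1}$, the function $v = \omega_*^{-1} u$ satisfies $\Cal{A}(\delta) v = \lambda v$. Moreover, $\omega_* \geq 1$ everywhere and $\omega_*^{-1}$ decays exponentially at $+\infty$, so $v \in H^4(\R)$ as well; in particular, all boundary terms arising from integration by parts on $\R$ will vanish.

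Next, I take the $L^2(\R)$ inner product of $\Cal{A}(\delta) v = \lambda v$ against $v$ and integrate by parts:
\begin{equation*}
\lambda \|v\|^2 = -\delta^2 \|v''\|^2 - \|v'\|^2 + c_* \langle v', v\rangle + \langle b\,v, v\rangle,
\end{equation*}
where $b(x;\delta) = f'(q_*(x;\delta))$ is real-valued and uniformly bounded in $x$ and $\delta$ (since $q_*$ takes values in a compact set and $f$ is smooth). Because $v \in H^1(\R)$, the quantity $\langle v', v\rangle$ is purely imaginary: $2\real \langle v', v\rangle = \int (|v|^2)' \, dx = 0$.

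Separating real and imaginary parts then gives the two desired estimates. For the real part,
\begin{equation*}
\real \lambda \, \|v\|^2 = -\delta^2\|v''\|^2 - \|v'\|^2 + \langle b v, v\rangle \leq \|b(\cdot;\delta)\|_\infty \|v\|^2,
\end{equation*}
so $\real \lambda \leq \|b\|_\infty$. Rearranging the same identity also yields
\begin{equation*}
\|v'\|^2 \leq \bigl(\|b\|_\infty - \real \lambda\bigr)\|v\|^2.
\end{equation*}
For the imaginary part, since $\langle b v, v\rangle \in \R$,
\begin{equation*}
\imag \lambda\, \|v\|^2 = c_* \imag \langle v', v\rangle,
\end{equation*}
and Cauchy--Schwarz combined with the previous bound gives
\begin{equation*}
|\imag \lambda|\, \|v\|^2 \leq c_* \|v'\|\,\|v\| \leq c_* \sqrt{\|b\|_\infty - \real \lambda}\;\|v\|^2.
\end{equation*}
Dividing by $\|v\|^2 > 0$ yields the claimed bound on $|\imag \lambda|$, and hence any unstable eigenvalue lies in the compact set
\begin{equation*}
K = \bigl\{\lambda \in \C : 0 \leq \real \lambda \leq M,\ |\imag \lambda| \leq c_*\sqrt{M - \real \lambda}\bigr\},
\end{equation*}
where $M = \sup_\delta \|b(\cdot;\delta)\|_\infty < \infty$.

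There is no real obstacle here; the only point worth verifying is the $L^2$-integrability of $v = \omega_*^{-1} u$, which is immediate from $|\omega_*^{-1}| \leq 1$, and the vanishing of the boundary terms, which follows from $v \in H^4(\R)$. The proposition is sharp only in as much as it localizes unstable spectrum to a compact region; refining the bound is neither needed nor attempted.
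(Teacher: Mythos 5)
Your proposal is correct and is essentially the same argument as the paper's: both conjugate back to $\Cal{A}(\delta)$, take the $L^2$ inner product of the eigenvalue equation with the eigenfunction, split into real and imaginary parts, and apply Cauchy--Schwarz to the first-order term. The only cosmetic difference is that you integrate by parts in physical space where the paper passes to Fourier variables via Plancherel; these are equivalent computations.
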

\begin{proof}
We work with $\delta\in(-\delta_0, \delta_0)$, with $\delta_0$ small enough so that Theorem \ref{t: existence} applies.
Assume that $\lambda\in\C$ and $\psi \in H^4(\R)$ satisfy $\Cal{L}(\delta)\psi = \lambda \psi$. Coming back to the unweighted operator $\Cal{A}(\delta) = \omega_*^{-1} \Cal{L}(\delta) \omega_*$, defined by \eqref{e: unweighted linearized EFKPP}, we obtain 
\begin{equation}
\label{e: large eig 1}
\Cal{A}(\delta) \phi = \lambda \phi
\end{equation}
with $\phi = \omega_*^{-1} \psi \in H^4(\R)$. Up to a scalar multiplication, we can assume that $\norme{\phi}_{L^2(\R)} = 1$.
Now we take the $L^2(\R)$-inner product of \eqref{e: large eig 1} with $\phi$, and pass into Fourier space, to obtain that:
\begin{equation}
\label{e: large eig 2}
\int_\R (-\delta^2 \xi^4 - \xi^2 + ic_* \xi) \absolu{\hat{\phi}(\xi)}^2 \d \xi + \int_\R f'(q_*(x)) \absolu{\phi(x)}^2 \d x = \lambda,
\end{equation}
where $\mathcal{F} u = \hat{u}$ denotes the Fourier transform of a function $u$. We let $I_0 = \int_\R f'(q_*(x)) \absolu{\phi(x)}^2 \d x$ denote the 0th order term. Then, real and imaginary parts of equation \eqref{e: large eig 2} give
\begin{equation*}
\real{\lambda} - I_0 = -\int_{\R}(\delta^2 \xi^4 + \xi^2) \absolu{\hat{\phi}}^2 \d \xi,
\hspace{4em}
\imag{\lambda} = c_*\int_{\R} \xi \absolu{\hat{\phi}}^2 \d \xi.
\end{equation*}
Hence by the Cauchy-Schwartz inequality:
\begin{equation}
\label{e: large eig 4}
0 \leq \left(\imag{\lambda}\right)^2 \leq c_*^2 \norme{\hat{\phi}}_{L^2}^2 \int_\R \xi^2 \absolu{\hat{\phi}}^2 \d \xi \leq  c_*^2\int_{\R}(\delta^2 \xi^4 + \xi^2) \absolu{\hat{\phi}}^2 \d \xi = c_*^2 \left({I_0} - \real{\lambda}\right).
\end{equation}
Note that $b(\cdot\, ; \delta)$ is uniformly bounded with respect to $\delta \in (-\delta_0,\delta_0)$, since this holds for $q_*(\cdot\, ; \delta)$ from Theorem \ref{t: existence}, and since $f'$ is continuous. 

Observe that $\absolu{I_0} \leq \norme{b}_\infty \norme{\phi}_{L^2}^2$. Inserting this into \eqref{e: large eig 4} leads to the claimed bounds on $\real{\lambda}$ and $\imag{\lambda}$. 
These bounds together with the requirement $\real{\lambda} \geq 0$ define a compact set $K$.
\end{proof} 

We now conclude the proof of Theorem \ref{t: spectral stability} by excluding the possibility of any eigenvalues in the intermediate region; see Figure \ref{f: point spectrum}.
\begin{prop}\label{p: moderate eigenvalues}
For each $\delta_0> 0$ sufficiently small, there exists $r(\delta_0) > 0$ with $r(\delta_0) \to 0$ as $\delta_0 \to 0$ such that for all $\delta$ with $|\delta| < \delta_0$, the operator $\mathcal{L}(\delta)$ has no eigenvalues in $\{ \real{\lambda} \geq 0 \} \setminus B(0, r(\delta_0))$. 
\end{prop}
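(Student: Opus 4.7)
The plan is to establish uniform invertibility of $\mathcal{L}(\delta) - \lambda$ for $\delta$ small and $\lambda$ in a compact annular region bounded away from the origin. Fix $r > 0$ and set $K_r := \{\lambda \in K : |\lambda| \geq r, \, \mathrm{Re}\, \lambda \geq 0\}$, where $K$ is the compact set from Proposition \ref{p: large eigenvalues}. I aim to show that there exists $\delta_0(r) > 0$ such that $\mathcal{L}(\delta)$ has no eigenvalues in $K_r$ for all $|\delta| < \delta_0(r)$; inverting this correspondence then produces $r(\delta_0) \to 0$ as $\delta_0 \to 0$.

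The argument takes place on the weighted space $H^2_{0, \eta}(\R)$ for a suitable $\eta > 0$. Fix $r > 0$ and choose $\eta > 0$ small enough that: (i) the preconditioner estimates of Lemmas \ref{l: preconditioner estimates} and \ref{l: T delta estimates} apply; (ii) every $\lambda \in K_r$ lies strictly to the right of the conjugated asymptotic spectrum $\sigma(\mathcal{L}_{\eta, +}(0)) = \{-k^2 + 2i\eta k + \eta^2 : k \in \R\}$; and (iii) Lemma \ref{l: ansatz} guarantees that the small stable spatial root $\nu_2(\delta, \gamma)$ with $\gamma = \sqrt{\lambda}$ satisfies $\mathrm{Re}\, \nu_2 < -\eta$ for all $\lambda \in K_r$ and all $\delta$ small. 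Condition (iii) together with Lemma \ref{l: left stability} ensures that any $L^2$ eigenfunction of $\mathcal{L}(\delta)$ with eigenvalue $\lambda \in K_r$ automatically lies in $H^4_{0, \eta}(\R) \subset H^2_{0, \eta}(\R)$ via standard expansion in the stable spatial eigenspaces at $\pm \infty$. Condition (ii) and the Morse index computation in the proof of Lemma \ref{l: L0 fredholm properties} make $\mathcal{L}(0) - \lambda : H^2_{0, \eta}(\R) \to L^2_{0, \eta}(\R)$ Fredholm of index $0$ for each $\lambda \in K_r$.

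At $\delta = 0$, classical spectral theory of the Fisher-KPP critical front --- via the self-adjoint conjugate $\partial_x^2 + f'(q_0) - c_*^2/4$ of $\mathcal{A}(0)$ and Sturm-Liouville type arguments --- rules out eigenvalues of $\mathcal{L}(0)$ in $\{\mathrm{Re}\, \lambda \geq 0\} \setminus \{0\}$ on the weighted space. Combined with Fredholm index $0$, this makes $\mathcal{L}(0) - \lambda$ invertible on $H^2_{0, \eta}(\R) \to L^2_{0, \eta}(\R)$ for each $\lambda \in K_r$. To transfer invertibility to small $\delta$, introduce the preconditioned operator
\begin{equation*}
\mathcal{T}(\delta, \lambda) := (1 - \delta^2 \partial_x^2)^{-1}(\mathcal{L}(\delta) - \lambda) : H^2_{0, \eta}(\R) \to L^2_{0, \eta}(\R).
\end{equation*}
Using the decomposition from Sections \ref{s: existence} and \ref{s: small eigenvalues} along with Lemmas \ref{l: preconditioner estimates} and \ref{l: T delta estimates}, the map $(\delta, \lambda) \mapsto \mathcal{T}(\delta, \lambda)$ is jointly continuous in the operator norm. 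Since $\mathcal{T}(0, \lambda)$ is invertible on the compact set $K_r$ and invertible operators form an open set, compactness yields $\delta_0(r) > 0$ such that $\mathcal{T}(\delta, \lambda)$ remains invertible for $|\delta| < \delta_0(r)$ and $\lambda \in K_r$. Injectivity of the preconditioner then precludes eigenvalues of $\mathcal{L}(\delta)$ in $K_r$.

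The main obstacle is step (iii): since $\mathrm{Re}\, \nu_2(\delta, \gamma) \sim -\mathrm{Re}\, \sqrt{\lambda}$ shrinks as $\lambda$ approaches the origin, the admissible weight $\eta$ must shrink with $r$, and the constants in the continuity argument degenerate correspondingly. Both effects force $\delta_0(r) \to 0$ as $r \to 0$, producing exactly the convergence $r(\delta_0) \to 0$ asserted in the statement. A secondary concern is formulating the classical no-unstable-point-spectrum result for the Fisher-KPP critical front in precisely the weighted space at hand; this is routine via the self-adjoint conjugate, but must be checked carefully given that $q_0'$ is itself not in the weighted $L^2$.
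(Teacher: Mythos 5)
Your approach is correct in outline but is both different from and considerably heavier than the paper's. The paper argues by contradiction directly on unweighted spaces: suppose $\delta_n\to 0$ with eigenvalues $\lambda_n$ bounded away from the origin and normalized eigenfunctions $u_n\in H^4(\R)$; extract $\lambda_n\to\lambda_\infty\neq 0$ in the compact set $K$; rewrite the preconditioned eigenvalue equation as $(\mathcal L(0)-\lambda_\infty)u_n = f_n$ where the error terms $f_n\to 0$ in $L^2(\R)$ by Lemmas \ref{l: preconditioner estimates} and \ref{l: T delta estimates}; and invert $\mathcal L(0)-\lambda_\infty$ on $L^2(\R)$ (possible because $\lambda_\infty$ is away from the essential spectrum of $\mathcal L(0)$, which touches the axis only at the origin, and the classical KPP front has no unstable point spectrum) to get $u_n\to 0$ in $H^2$, a contradiction. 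No weighted spaces appear: since $\lambda_\infty\in K_r$ lies to the right of $\sigma_{\rm ess}(\mathcal L(0))$, the resolvent $(\mathcal L(0)-\lambda_\infty)^{-1}$ already exists on the unweighted space. Your proposal instead runs a uniform-invertibility argument on $H^2_{0,\eta}(\R)\to L^2_{0,\eta}(\R)$, which is valid but imports two extra obligations: you must verify (which you do, though incompletely) that every $L^2$ eigenfunction at $\lambda\in K_r$ actually lies in the weighted space, and you must recompute the Fredholm index on the weighted space for $\lambda\neq 0$. On the first point, your citation of Lemma \ref{l: ansatz} is not quite right: that lemma expands the spatial roots only for $(\delta,\gamma)$ near $(0,0)$, i.e.\ for $\lambda$ near the origin, which is precisely the regime excluded from $K_r$; the exponential decay of eigenfunctions for $\lambda\in K_r$ to the right of the essential spectrum is a standard exponential-dichotomy fact but requires a separate (and uniform-in-$\delta$) argument rather than an appeal to Lemma \ref{l: ansatz}. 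Your concern that the constants in Lemma \ref{l: preconditioner estimates} degenerate as $\eta\to 0$ is also misplaced: those constants improve as $\eta\to 0$; the genuine degeneration as $r\to 0$ comes instead from the blow-up of $\|(\mathcal L(0)-\lambda)^{-1}\|$ as $\lambda$ approaches the essential spectrum at the origin, which is also what produces $r(\delta_0)\to 0$ in the paper's sequential argument. With these two points repaired your route closes; the paper's route reaches the same conclusion with less machinery.
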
 
\begin{proof}
Suppose to the contrary that there exists a sequence $\delta_n \to 0$ with corresponding eigenvalues $\lambda_n$ bounded away from the origin, with $\real{\lambda_n} \geq 0$, and with eigenfunctions $u_n$. We normalize the eigenfunctions so that $||u_n||_{H^2} = 1$ for all $n$. By Proposition \ref{p: large eigenvalues}, these eigenvalues all belong to the compact set $K$. By compactness, we extract a subsequence along which $\lambda_n \to \lambda_\infty$ for some $\lambda_\infty$ with $\lambda_\infty \in K$, and $\lambda_\infty \neq 0$, since the sequence was bounded away from the origin. We now show that in this limit, $\lambda_\infty$ is an eigenvalue for $\mathcal{L}(0)$ with $\real{\lambda} \geq 0$, contradicting the spectral stability of this operator.

These eigenfunctions solve $(\mathcal{L}(\delta_n) - \lambda_n) u_n = 0$. We precondition by applying $(1-\delta_n^2 \partial_x^2)^{-1}$ to both sides of this equation, obtaining
\begin{align}
\left[ \partial_x^2 + a_1(\cdot, \delta_n) \partial_x + a_0(\cdot, \delta_n) + E_1 (\delta_n) + E_2(\delta_n) - \lambda_n  + \lambda_n T(\delta_n)\right] u_n = 0, \label{e: moderate eval eqn}
\end{align}
where 
\begin{align*}
E_1(\delta_n) = \delta_n^2 (1-\delta_n^2 \partial_x^2)^{-1} (a_3(\cdot, \delta_n) \partial_x^3 + a_2 (\cdot, \delta_n) \partial_x^2), 
\hspace{4em}
E_2 (\delta_n) = T(\delta_n) (a_1 (\cdot, \delta_n) \partial_x + a_0(\cdot, \delta_n)). 
\end{align*}
We relate this to the KPP linearization $\mathcal{L}(0)$ by rewriting \eqref{e: moderate eval eqn} as 
\begin{align*}
(\mathcal{L}(0)-\lambda_\infty) u_n = - E_1 (\delta_n) u_n - E_2 (\delta_n) u_n + E_3 (\delta_n) u_n + (\lambda_n - \lambda_\infty) u_n + \lambda_n T(\delta_n) u_n =: f_n
\end{align*}
where
\begin{align*}
E_3 (\delta_n) = (a_1(\cdot, 0) - a_1(\cdot, \delta_n)) \partial_x + (a_0 (\cdot, 0) - a_0 (\cdot, \delta_n)).
\end{align*}
It follows from Lemma \ref{l: preconditioner estimates} and the fact that the coefficients $a_j(\cdot; \delta)$ are uniformly bounded in $\delta$ that
\begin{align*}
||E_1(\delta_n) u_n||_{L^2} \leq C \delta_n ||u_n||_{H^2} = C \delta_n. 
\end{align*} 
Similarly, by Lemma \ref{l: T delta estimates} we see that $E_2(\delta_n) u_n \to 0$ and $\lambda_n T(
\delta_n) u_n \to 0$ in $L^2$ as $n \to \infty$, since $\lambda_n$ and $u_n$ are uniformly bounded in $n$. Lastly, by the construction of the exponential weights, the fact that $q_* (\cdot; \delta)$ converges uniformly to $q_*(\cdot; 0)$ as $\delta \to 0$ by Theorem \ref{t: existence}, and the assumption that $||u_n||_{H^2}$ is uniformly bounded, we see that also $E_3(\delta_n) u_n \to 0$ in $L^2$ as $n \to \infty$. Hence $f_n$ converges to zero in $L^2$ as $n \to \infty$. 

Since $\lambda_\infty$ is not in the spectrum of $\mathcal{L}(0)$, we can can invert $(\mathcal{L}(0)-\lambda_\infty)$ to write
\begin{align*}
u_n = (\mathcal{L}(0) - \lambda_\infty)^{-1} f_n,
\end{align*}
from which we observe that $u_n \to 0$ in $H^2(\R)$ as $n \to \infty$ by boundedness of the resolvent operator. This is a contradiction since we have normalized $u_n$ so that $||u_n||_{H^2} = 1$. 
\end{proof}

\begin{proof}[Proof of Theorem \ref{t: spectral stability}]
By Proposition \ref{p: gap lemma}, there exist $\gamma_1, \delta_1 > 0$ so that for all $\delta \in (-\delta_1, \delta_1)$, $\mathcal{L}(\delta)$ has no eigenvalues in $B(0, \gamma_1^2)$, and also has no resonance at $\lambda=  0$. By Proposition \ref{p: moderate eigenvalues}, there exists a $\delta_0 > 0$ so for all $\delta \in (-\delta_0, \delta_0)$, $\mathcal{L}(\delta)$ has no eigenvalues in $\{ \real{\lambda} \geq 0 \} \setminus B(0, \frac{\gamma_1^2}{2})$. Hence for all $\delta \in (-\delta_0, \delta_0)$, $\mathcal{L}(\delta)$ has no eigenvalues in $\{ \real{\lambda} \geq 0 \}$, as desired. 
\end{proof}

\bibliographystyle{plain}

\bibliography{references}

\begin{thebibliography}{10}

\bibitem{Aronson}
D.~Aronson and H.~Weinberger.
\newblock Multidimensional nonlinear diffusion arising in population genetics.
\newblock {\em Adv. Math.}, 30(1):33--76, 1978.

\bibitem{AveryGoh}
M.~Avery, R.~Goh, O.~Goodloe, A.~Milewski, and A.~Scheel.
\newblock Growing stripes, with and without wrinkles.
\newblock {\em SIAM J. Appl. Dyn. Syst.}, 18(2):1078--1117, 2019.

\bibitem{AveryScheelSelection}
M.~Avery and A.~Scheel.
\newblock Univerisal selection of pulled fronts.
\newblock {\em Preprint}, 2020.

\bibitem{AveryScheel}
M.~Avery and A.~Scheel.
\newblock Asymptotic stability of critical pulled fronts via resolvent
  expansions near the essential spectrum.
\newblock {\em SIAM J. Math. Anal.}, 53(2):2206--2242, 2021.

\bibitem{Berg_Hulshof_Vandervorst_01}
J.~Bouwe van~den Berg, J.~Hulshof, and R.~C. Vandervorst.
\newblock Travelling waves for fourth order parabolic equations.
\newblock {\em SIAM J. Math. Analysis}, 32:1342--1374, 2001.

\bibitem{Bramson1}
M.~Bramson.
\newblock Maximal displacement of branching {B}rownian motion.
\newblock {\em Comm. Pure Appl. Math.}, 31(5):531--581, 1978.

\bibitem{Bramson2}
M.~Bramson.
\newblock {\em Convergence of solutions of the {K}olmogorov equation to
  traveling waves}.
\newblock Mem. Amer. Math. Soc. American Mathematical Society, 1983.

\bibitem{CarrChmaj}
J.~Carr and A.~Chmaj.
\newblock Uniqueness of traveling waves for nonlocal monostable equations.
\newblock {\em Proc. Amer. Math. Soc.}, 132(8):2433--2439, 2004.

\bibitem{DeeSaarloos}
G.T. Dee and W.~van Saarloos.
\newblock Bistable systems with propagating fronts leading to pattern
  formation.
\newblock {\em Phys. Rev. Lett.}, 60(25):2641--2644, 1988.

\bibitem{EckmannWayne}
J.-P. Eckmann and C.~E. Wayne.
\newblock The nonlinear stability of front solutions for parabolic partial
  differential equations.
\newblock {\em Comm. Math. Phys}, 161(2):323--334, 1994.

\bibitem{FayeHolzer}
G.~Faye and M.~Holzer.
\newblock Asymptotic stability of the critical {F}isher--{KPP} front using
  pointwise estimates.
\newblock {\em Z. Angew. Math. Phys.}, 70(1):13, 2018.

\bibitem{FayeScheel2}
G.~Faye and A.~Scheel.
\newblock Existence of pulses in excitable media with nonlocal coupling.
\newblock {\em Indiana Univ. Math. J.}, pages 1311--1348, 2014.

\bibitem{FayeScheel1}
G.~Faye and A.~Scheel.
\newblock Fredholm properties of nonlocal differential equations via spectral
  flow.
\newblock {\em Adv. Math.}, pages 400--456, 2015.

\bibitem{Fenichel}
N.~Fenichel.
\newblock Geometric singular perturbation theory for ordinary differential
  equations.
\newblock {\em J. Differential Equations}, 31:53--98, 1979.

\bibitem{FiedlerScheel}
B.~Fiedler and A.~Scheel.
\newblock Spatio-temporal dynamics of reaction-diffusion patterns.
\newblock In {\em Trends in Nonlinear Analysis}, pages 23--152, Berlin,
  Heidelberg, 2003. Springer, Berlin Heidelberg.

\bibitem{Gallay}
T.~Gallay.
\newblock Local stability of critical fronts in nonlinear parabolic partial
  differential equations.
\newblock {\em Nonlinearity}, 7(3):741--764, 1994.

\bibitem{GardnerZumbrun}
R.~A. Gardner and K.~Zumbrun.
\newblock The gap lemma and geometric criteria for instability of viscous shock
  profiles.
\newblock {\em Communications on Pure and Applied Mathematics}, 51(7):797--855,
  1998.

\bibitem{GohScheel}
R.~Goh and A.~Scheel.
\newblock Pattern-forming fronts in a {S}wift-{H}ohenberg equation with
  directional quenching -- parallel and oblique stripes.
\newblock {\em J. Lond. Math. Soc.}, 98(1):104--128, 2018.

\bibitem{Comparison1}
F.~Hamel, J.~Nolen, J.-M. Roquejoffre, and L.~Ryzhik.
\newblock A short proof of the logarithmic {B}ramson correction in
  {F}isher-{KPP} equations.
\newblock {\em Netw. Heterog. Media}, 8(1):275--289, 2013.

\bibitem{Henry}
D.~Henry.
\newblock {\em Geometric theory of semilinear parabolic equations}.
\newblock Lecture Notes in Math. Springer-Verlag, Berlin Heidelberg, 1981.

\bibitem{HolzerScheelPointwiseGrowth}
M.~Holzer and A.~Scheel.
\newblock Criteria for pointwise growth and their role in invasion processes.
\newblock {\em J. Nonlinear Sci.}, 24(1):661--709, 2014.

\bibitem{KapitulaPromislow}
T.~Kapitula and K.~Promislow.
\newblock {\em Spectral and dynamical stability of nonlinear waves}.
\newblock Appl. Math, Sci. Springer, New York, 2013.

\bibitem{KapitulaSandstede}
T.~Kapitula and B.~Sandstede.
\newblock Stability of bright solitary-wave solutions to perturbed nonlinear
  {S}chr{\"o}dinger equations.
\newblock {\em Phys. D}, 124(1):58 -- 103, 1998.

\bibitem{Kirchgassner}
K.~Kirchg{\"a}ssner.
\newblock On the nonlinear dynamics of travelling fronts.
\newblock {\em J. Differential Equations}, 96(2):256--278, 1992.

\bibitem{Lau}
K.-S. Lau.
\newblock On the nonlinear diffusion equation of {K}olmogorov, {P}etrovsky, and
  {P}iscounov.
\newblock {\em J. Differential Equations}, 59(1):44--70, 1985.

\bibitem{Comparison2}
J.~Nolen, J.-M. Roquejoffre, and L.~Ryzhik.
\newblock Convergence to a single wave in the {F}isher-{KPP} equation.
\newblock {\em Chin. Ann. Math. Ser. B}, 38(2):629--646, 2017.

\bibitem{Palmer1}
K.~Palmer.
\newblock Exponential dichotomies and transversal homoclinic points.
\newblock {\em J. Differential Equations}, 55:225--256, 1984.

\bibitem{Palmer2}
K.~Palmer.
\newblock Exponential dichotomies and fredholm operators.
\newblock {\em Proc. Amer. Math. Soc.}, 104:149--156, 1988.

\bibitem{PoganScheel}
A.~Pogan and A.~Scheel.
\newblock Instability of spikes in the presence of conservation laws.
\newblock {\em Z. Angew. Math. Phys.}, 61(6):979--998, 2010.

\bibitem{RottschaferDoelman}
V.~Rottsch{\"a}fer and A.~Doelman.
\newblock On the transition from the {G}inzburg-{L}andau equation to the
  extended {F}isher-{K}olmogorov equation.
\newblock {\em Phys. D}, 118(3):261--292, 1998.

\bibitem{RottschaferWayne}
V.~Rottsch{\"a}fer and C.E. Wayne.
\newblock Existence and stability of traveling fronts in the extended
  {F}isher-{K}olmogorov equation.
\newblock {\em J. Differential Equations}, 176(2):532--560, 2001.

\bibitem{ArndBjornRelativeMorse}
B.~Sandstede and A.~Scheel.
\newblock Relative {M}orse indices, {F}redholm indices, and group velocities.
\newblock {\em Discrete Contin. Dyn. Syst.}, 20(1):139--158, 2008.

\bibitem{Sattinger}
D.~Sattinger.
\newblock On the stability of waves of nonlinear parabolic systems.
\newblock {\em Adv. Math.}, 22(3):312--355, 1976.

\bibitem{vanSaarloos}
W.~van Saarloos.
\newblock Front propagation into unstable states.
\newblock {\em Phys. Rep.}, 386(2):29--222, 2003.

\end{thebibliography}

\end{document}